\documentclass{amsart}

\usepackage{comment}

\numberwithin{equation}{section}
\usepackage[initials,nobysame]{amsrefs}
\usepackage{amsmath,mathtools}
\usepackage{thmtools}
\usepackage{hyperref,xcolor,graphicx}
\usepackage[capitalize]{cleveref}
\usepackage{enumitem}
\setlist[enumerate,1]{label=\upshape{(\roman*)},ref=\roman*}

\usepackage{autonum}

\newtheorem{theorem}{Theorem}[section]
\newtheorem{proposition}[theorem]{Proposition}
\newtheorem{corollary}[theorem]{Corollary}
\newtheorem{lemma}[theorem]{Lemma}

\theoremstyle{definition}
\newtheorem{definition}[theorem]{Definition}

\newtheorem*{acknowledgements}{Acknowledgements}
\theoremstyle{remark}
\newtheorem{remark}[theorem]{Remark}
\newtheorem{example}[theorem]{Example}

\newcommand{\R}{\mathbf{R}}
\newcommand{\N}{\mathbf{N}}

\newcommand{\C}{\mathbf{C}}

\newcommand{\D}{\mathrm{D}}

\renewcommand{\S}{\mathbf{S}}
\renewcommand{\d}{\mathop{}\!\mathrm{d}}
\newcommand{\E}{\mathcal{E}}
\newcommand{\T}{\mathbf{T}}

\DeclareMathOperator{\Div}{\mathrm{div}}

\DeclareMathOperator{\dist}{\mathrm{dist}}

\DeclareMathOperator{\sign}{\mathrm{sgn}}

\newcommand{\mres}{\mathbin{\vrule height 1.6ex depth 0pt width
0.13ex\vrule height 0.13ex depth 0pt width 1.3ex}}

\title[Calibration energy and mean curvature flow]{Calibration energy and mean curvature flow}

\author[T.~Miura]{Tatsuya Miura}
\address[T.~Miura]{Department of Mathematics, Graduate School of Science, Kyoto University, Kitashirakawa Oiwake-cho, Sakyo-ku, Kyoto 606-8502, Japan}
\email{tatsuya.miura@math.kyoto-u.ac.jp}

\author[F.~Rupp]{Fabian Rupp}
\address[F.~Rupp]{Faculty of Mathematics, University of Vienna, Oskar-Morgenstern-Platz 1, 1090 Vienna, Austria.}
\email{fabian.rupp@univie.ac.at}

\date{\today}
\keywords{Mean curvature flow, calibration energy, calibrated geometry, gradient flow, desingularization of cones, self-expander, translator, breather, self-shrinker}
\subjclass[2020]{53E10, 53C38 (primary), 53C24, 53C42 (secondary)}

\begin{document}

\begin{abstract}
We introduce the calibration energy for oriented immersions into Euclidean space, quantifying the deviation from calibrated geometry. A key property is that this energy may remain finite for infinite-volume immersions, while a null-Lagrangian structure ensures that it has the same first variation as the volume functional. We establish an exact dissipation identity for the calibration energy along proper oriented mean curvature flows in arbitrary dimensions and codimensions, under a mild local-volume bound. In fact, our result covers a class of singular calibrations and singular initial data. Even in the smooth setting, this provides a new finite variational framework for mean curvature flow beyond the finite-volume regime. As a main application, we establish a general dynamical rigidity theorem for calibrated cones in arbitrary codimension: no singular calibrated cone can be desingularized by a proper oriented mean curvature flow. Our framework further yields novel rigidity theorems for solitons and convergence for two-dimensional immortal flows.
\end{abstract}

\maketitle
\setcounter{tocdepth}{2}
\tableofcontents

\section{Introduction}

Mean curvature flow is the $L^2$-gradient flow of volume. 
For compact immersions, this immediately yields a natural Lyapunov functional, but in the noncompact setting this most basic variational interpretation degenerates, as the volume is typically infinite.
The primary goal of this paper is to replace volume by a finite functional without changing the underlying gradient-flow structure.

To that end, we introduce the \emph{calibration energy} for oriented immersions in $\R^n$, which measures the deviation of the oriented tangent plane from calibrated geometry in an averaged sense.
Calibrated geometry was introduced in the celebrated work of Harvey and Lawson \cite{MR666108} as a framework for constructing minimal submanifolds via differential forms; see also \cite{MR1045637,MR2292510}. 

The key feature of our calibration energy is a null-Lagrangian structure, resulting in the same first variation as the volume.
The present work investigates the consequences of this observation on the dynamical side.
Our main analytic result, \Cref{thm:main_calibration}, establishes an exact dissipation identity for the calibration energy along proper oriented mean curvature flows. Geometrically, this may be interpreted as follows: if the initial submanifold is at finite distance from being calibrated, then the mean curvature flow monotonically decreases the calibration energy and thus drives the submanifold towards calibrated geometry. 

This energy identity has several important consequences, the most striking one being a dynamical rigidity theorem for calibrated cones (\Cref{thm:cone_rigidity_intro}):
\emph{a singular calibrated cone can never be desingularized by mean curvature flow}.
%This gives a variational mechanism for rigidity that remains available in arbitrary codimension, where the barrier and comparison methods of hypersurface theory are generally absent.

In contrast to the volume, our calibration energy can be finite for a large class of noncompact submanifolds, thus providing a novel, finite variational framework for infinite-volume mean curvature flow. In fact, even the class of zero-energy states is rich and corresponds to calibrated submanifolds. Moreover, unlike the entropy \cite{MR2528703,MR2993752}, the calibration energy is not scale-invariant and is thus especially suited to analyzing rigidity and the asymptotic behavior of immortal solutions.

Since our approach is variational rather than comparison-based, we allow for arbitrary codimension. Higher-codimensional mean curvature flows include important classes such as Lagrangian mean curvature flow, and their general behavior is much less understood than in the hypersurface case; see, e.g., \cite{MR2483374,MR3289845}.

% old version
%In contrast to the volume, the calibration energy can be finite for a large class of noncompact submanifolds, thus providing a novel, finite variational framework for infinite-volume mean curvature flow. 
% This viewpoint is complementary to existing monotone quantities such as the entropy \cite{MR2528703,MR2993752}. 
% Whereas the entropy is scale invariant and is particularly effective for the study of finite-time singularities, the calibration energy does not share this invariance and is especially suited to analyzing rigidity and the asymptotic behavior of immortal solutions.
% Moreover, in the constant-coefficient case it inherits the translation and scaling symmetries of volume, which will be crucial for the rigidity results discussed later.

A further important aspect is that the calibration energy is not merely an ad hoc quantity: its null-Lagrangian structure indicates potential applicability beyond mean curvature flow, including to other geometric gradient flows, possibly of higher order. Indeed, in the one-dimensional case, this has been successfully implemented in our previous work \cite{MR5008349} for several higher order flows of curves, where comparison-based methods are typically unavailable, cf.\ \cite{MR4861585,miura2025embeddednessgraphicalityelasticflow}.

% old version
% A further important aspect is that the calibration energy is not merely an ad hoc quantity: its null-Lagrangian structure indicates potential applicability beyond mean curvature flow, including to other geometric gradient flows, possibly of higher order. 
% In fact, the present paper can be viewed as a substantial extension of our previous work \cite{MR5008349}, where curve flows with ``unidirectional ends'' were studied, to arbitrary dimensions and, possibly, ``multidirectional ends''.
% The curve-flow setting in \cite{MR5008349} already includes applications to fourth-order flows, demonstrating the broader scope of the method.
% For such higher-order flows, comparison-based methods are typically unavailable, which makes the development of energy-based approaches particularly important, cf.\ \cite{MR4861585,miura2025embeddednessgraphicalityelasticflow}.
% In the present work, however, we focus on mean curvature flow while allowing for arbitrary dimensions and codimensions.

\subsection{Calibration energy}

Throughout the paper, let $M^m$ be a possibly noncompact, $m$-dimensional smooth manifold (without boundary). We also fix an arbitrary $n\in\N$ with $1\leq m<n$ unless otherwise specified. 
All maps that appear will be smooth unless otherwise noted.

Following \cite{MR666108}, a \emph{calibration} $\phi\in\Omega^m(\R^n)$ is a closed $m$-form of comass $1$ on $\R^n$. Given an oriented immersion $F\colon M^m\to\R^n$, we may thus define $\phi(\tau)$ as the function $p\mapsto \phi_{F(p)}(\tau(p))$, where $\tau\colon M\to \mathbf{Gr}_m^+(\R^n)$ denotes the tangential Gauss map, see \eqref{eq:tangent_field}. Here $\mathbf{Gr}_m^+(\R^n)$ is the oriented Grassmannian, identified with the set of simple unit $m$-vectors in $\Lambda^m\R^n$.
We then define the \emph{$\phi$-calibration energy} by
\begin{equation}
    \E_\phi[F]\vcentcolon =\int_M (1-\phi(\tau))\d\mathrm{vol},
\end{equation}
where $\mathrm{vol}$ is the volume measure on $M$ induced by $F$, cf.\ \eqref{eq:dvol}.

Since $\phi$ has comass $1$, we have $\phi(\tau)\le 1$ and hence $\E_\phi[F]\geq0$, where equality holds if and only if $F$ is \emph{calibrated by $\phi$,} i.e., $\phi(\tau)=1$ on $M$. The functional $\E_\phi[F]$ measures the deviation of $F$ from being calibrated by $\phi$ and, roughly speaking, the finiteness of $\E_\phi[F]$ means that the ends of $F$ are ``asymptotically calibrated by $\phi$''. 

In the simplest case, the calibration $\phi_E\in\Omega^m(\R^n)$ is induced by a fixed oriented plane $E\in\mathbf{Gr}_m^+(\R^n)$ through $\phi_E(\xi)=\langle \xi,E\rangle$.
In this case we say that $\phi_E$ is \emph{unidirectional}.
Then the calibration energy takes the form
\[
\E_{\phi_E}[F]=\int_M (1-\langle\tau,E\rangle) \d\mathrm{vol}
=\frac12\int_M |\tau-E|^2 \d\mathrm{vol}.
\]
In the one-dimensional case $m=1$, this reduces to the \emph{direction energy} for curves, where $E$ is simply a fixed unit vector in $\R^n$.
The direction energy method for curve flows was introduced in \cite{MR5008349} (inspired by previous works in stationary problems \cite{Miura20,miura2024uniqueness}); we also note the recent work of Otto--Schubert--Westdickenberg \cite{MR4822909}, where an equivalent quantity appears under the name \emph{excess surface area} in the study of the Mullins--Sekerka flow. 
The finite direction energy regime thus requires the ends to be ``asymptotically unidirectional''. 

For curves, it can be shown that being asymptotically calibrated implies asymptotic unidirectionality, see \Cref{prop:curve_to_unidirectional} below.
However, in the case $m\geq 2$, the class of calibrations is much richer and connects to various interesting geometric concepts, including the \emph{K\"ahler} and \emph{special Lagrangian} geometries \cite{MR666108}, see \Cref{sec:finite_energy} for details. 

Any $\phi$-calibrated immersion is locally volume-minimizing, in particular minimal, by an application of Stokes' theorem. For $M$ compact (thus closed), there are no minimal immersions into $\R^n$, and, if $M$ is also oriented, the $\phi$-calibration energy reduces to the volume functional: since $\R^n$ is contractible, there exists $\Phi\in \Omega^{m-1}(\R^n)$ with $\d\Phi=\phi$. The difference to the volume satisfies (see \eqref{eq:pullback})
\begin{align}
    \int_M\phi(\tau)\d\mathrm{vol} = \int_M F^*\phi = \int_M F^*(\d\Phi) = \int_M \d (F^*\Phi) = 0,
\end{align}
by Stokes' theorem since $\partial M=\emptyset$.

In the language of the calculus of variations, $\phi(\tau)\d\mathrm{vol}$ is a \emph{null Lagrangian}.
One may thus expect that the calibration energy has the same first variation as the standard volume functional.
In particular, mean curvature flow may be viewed, formally, as the $L^2$-gradient flow of $\E_\phi$ instead of volume. 
The point of the present paper is to make this interpretation rigorous in the noncompact case.

\subsection{Energy identity}
We say that a one-parameter family $F\colon M^m\times[0,T)\to\R^n$ of immersions $F_t\vcentcolon=F(\cdot,t)$ is  a \emph{mean curvature flow} if it solves the geometric evolution equation
\begin{equation}\label{eq:MCF}
    (\partial_tF)^\perp=H,
\end{equation}
where $H$ denotes the mean curvature vector of $F_t$ and $\perp$ is the normal projection along $F_t$.
We say that a flow $F\colon M\times I\to\R^n$ of immersions on a time interval $I$ is proper if $F|_{M\times[t_1,t_2]}$ is a proper map for every $[t_1,t_2]\subset I$.

For a proper immersion $F\colon M^m\to\R^n$, let $\mu=\mu_{F}$ be (the weight measure of) the associated rectifiable $m$-varifold, i.e., the push-forward Radon measure $\mu\vcentcolon =F_\#\mathrm{vol}$ on $\R^n$.
We say that $F$ has \emph{uniformly finite local volume} if
\begin{align}\label{eq:intro_M_finite}
    \sup_{x_0\in\R^n} \mu(B_1(x_0)) < \infty,
\end{align}
where $B_r(x_0)\subset \R^n$ denotes the open ball of radius $r$ centered at $x_0$.
By properness, the uniformly finite local volume condition only controls the behavior at infinity, ruling out arbitrarily high local sheet multiplicity caused by the ends.
In particular, \eqref{eq:intro_M_finite} follows whenever $F$ has finite entropy in the sense of \cite{MR2993752}. 

Our main analytic theorem is the following exact dissipation identity.

\begin{theorem}\label{thm:main_calibration}
    Let $F\colon M^m\times[0,T)\to\R^n$ be a proper oriented mean curvature flow.
    Let $\phi\in\Omega^m(\R^n)$ be a calibration.
    Suppose that $F_0$ has uniformly finite local volume and finite $\phi$-calibration energy.
    Then, for all $0\leq t_1\leq t_2<T$,
    \begin{equation}\tag{E}\label{eq:energy_identity}
         \E_\phi[F_{t_2}]+\int_{t_1}^{t_2}\int_{M}|H|^2 \d\mathrm{vol} \d t = \E_\phi[F_{t_1}].
    \end{equation}
\end{theorem}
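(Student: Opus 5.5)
The plan is to prove a localized version of \eqref{eq:energy_identity} with cutoff functions and then let the cutoff radius go to infinity. The local input is a pointwise evolution identity. Since $\phi$ is closed, $\mathcal{L}_X\phi=\d(\iota_X\phi)$ for any vector field $X$. Hence along the flow
\[
\partial_t\big(F_t^*\phi\big)=\d\big(F_t^*(\iota_{\partial_tF}\phi)\big).
\]
Combined with the standard evolution $\partial_t\,\d\mathrm{vol}=-\langle H,\partial_tF\rangle\d\mathrm{vol}$ (using $(\partial_tF)^\perp=H$), this gives
\[
\partial_t\big[(1-\phi(\tau))\d\mathrm{vol}\big]=-|H|^2\d\mathrm{vol}+\d\big(\beta_t+F_t^*(\iota_{\partial_t F}\phi)\big).
\]
Here $\beta_t$ is the $(m-1)$-form dual to the tangential part of $\partial_tF$; the tangential part contributes only a divergence term. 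I would also check that the boundary term can be expressed through $H$ and tangential quantities, with the tangential reparametrization contributing only exact terms. Comass $1$ then gives the pointwise bound $|\iota_{H}\phi|\le|H|$ on tangent $(m-1)$-vectors. Actually, to avoid tangential terms it is cleaner to first reduce to the normal parametrization $\partial_tF=H$, since $\E_\phi$ and $\int|H|^2$ are invariant under reparametrization.

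Next, fix a cutoff $\eta_R(x)=\eta(x/R)$ on $\R^n$ and integrate against $\eta_R\circ F_t$, which is compactly supported on $M$ by properness. This yields
\[
\int_M\eta_R(1-\phi(\tau))\d\mathrm{vol}\Big|_{t_1}^{t_2}+\int_{t_1}^{t_2}\!\!\int_M\eta_R|H|^2
=\int_{t_1}^{t_2}\!\!\int_M\Big[(1-\phi(\tau))\langle D\eta_R,H\rangle-F^*(\iota_H\phi)\wedge F^*\d\eta_R\Big].
\]
The first error term on the right comes from $\partial_t(\eta_R\circ F)$, the second from integrating the exact term by parts. Both are supported in the annulus $B_{2R}\setminus B_R$ and are $O(R^{-1})\,|H|$ pointwise. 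For the first term I would use $1-\phi(\tau)\le 2$ and, better, $(1-\phi(\tau))\le\sqrt{2(1-\phi(\tau))}$-type control. For the second I need the key algebraic estimate
\[
|\iota_H\phi\,(\text{tangent})\wedge\d\eta|\le C\,|H|\,|D\eta|\sqrt{1-\phi(\tau)}.
\]
This should hold because when $\phi(\tau)=1$ the plane is calibrated, and then the first variation of $\phi$ vanishes, i.e.\ $\phi(\tau\text{ with one vector replaced by a normal})=0$. That is the standard fact that calibrated planes are critical, quantified via comass $1$ applied to rotated planes $\cos s\,e_i+\sin s\,\nu$.

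With this estimate, Cauchy--Schwarz bounds the right side by
\[
\frac{C}{R}\Big(\int\!\!\int|H|^2\Big)^{1/2}\Big(\int\!\!\int_{\{R\le|F|\le 2R\}}(1-\phi(\tau))\Big)^{1/2}.
\]
First, working on $[0,t]$ with $t_1=0$, I would derive an a priori bound showing that $\E_\phi[F_t]$ and $\int\!\!\int\eta_R|H|^2$ are bounded, via absorption and a Gronwall-type argument in $R$. Then the right side tends to $0$ as $R\to\infty$, and monotone convergence gives \eqref{eq:energy_identity}.

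The main obstacle is that the a priori control of $\int\!\!\int|H|^2$ on the annulus is not yet available. The tails of the energy need not decay uniformly, and $|H|$ must be controlled locally near infinity. This is where uniformly finite local volume enters. Via Ecker--Huisken type local estimates, or the localized volume evolution $\frac{d}{dt}\int\eta\,\d\mathrm{vol}\le -\int\eta|H|^2+C\int|D^2\eta|$, it propagates to $F_t$ with bounds uniform in the center, and it gives the local estimate
\[
\int\!\!\int_{B_1(x_0)}|H|^2\le C.
\]
Summing over $\sim R^{n}$ balls is too crude, so instead I would use the cruder pointwise bound on the first error term together with a smarter weight, namely exponentially decaying cutoffs or a covering argument with the energy density. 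The first attempt is simply to run the above absorption with $\eta_R^2$: the error becomes $\le\frac12\int\!\!\int\eta_R^2|H|^2+\frac{C}{R^2}\int\!\!\int_{\mathrm{ann}}(1-\phi(\tau))$. The last term requires only that the energy on $[0,t]$ be finite, which the same inequality gives recursively. This closes the argument without any volume bound except to justify finiteness at intermediate steps.
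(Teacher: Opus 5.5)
Your local ingredients are sound and match the paper's: the localized evolution identity, the Young absorption with the cutoff $\eta_R^2$, and the key pointwise estimate. Your rotation argument gives $\phi(\tau)^2+\phi(N_i)^2\le 1$ for each $i$, hence the estimate with constant $\sqrt{2m}$. The paper gets constant $1$ via simplicity of $m$-vectors in $\Lambda^mP$ with $\dim P=m+1$, but any constant suffices.

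The gap is the step you treat as routine at the end: proving $\sup_{r\le t}\E_\phi[F_r]<\infty$. With cutoffs centered at the origin, absorption gives, for $\nu_r\vcentcolon=(F_r)_\#\bigl((1-\phi(\tau))\mathrm{vol}\bigr)$,
\[
\nu_t(B_R)+\tfrac12\int_0^t\int_{B_R}|H|^2\d\mu_r\d r\le \E_\phi[F_0]+\frac{C}{R^2}\int_0^t\nu_r(B_{2R}\setminus B_R)\d r .
\]
This controls scale $R$ only by scale $2R$. The claim that ``the same inequality gives it recursively'' is circular. Iterating merely pushes the problem to larger scales. Properness gives finiteness of each $\nu_r(B_\rho)$ but no control as $\rho\to\infty$, so nothing here rules out $\E_\phi[F_r]=\infty$ for $r>0$. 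Hence your closing claim, that the argument works ``without any volume bound except to justify finiteness'', is not correct. What is needed is quantitative control at infinity, uniform on $[0,T']$.

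The paper's missing device is to take the supremum over centers, $\E(t,R)\vcentcolon=\sup_{x_0}\nu_t(B_R(x_0))$, and likewise for the $|H|^2$ term. The propagated uniform local volume bound (\Cref{lem:uniform_bounded_local_volume_new}) makes $\E(\cdot,R)$ finite on $[0,T']$. The covering bound $\E(r,2R)\le\Gamma(n)\E(r,R)$ then turns the inequality into a closed Gronwall inequality in $t$ at fixed $R$. This yields $\E(t,R)\le 2\E_\phi[F_0]e^{C\Gamma t/R^2}$ and $h(t,R)\le 4\E_\phi[F_0]e^{C\Gamma t/R^2}$. Letting $R\to\infty$ gives the global bounds, hence $|H|\sqrt{1-\phi(\tau)}\in L^1$ in spacetime. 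Only then do the cutoff error terms vanish, and the exact identity follows by dominated convergence.

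Alternatively, your origin-centered inequality can be iterated over the scales $2^kR$ if you feed in the polynomial growth $\nu_r(B_\rho)\le C\rho^n$ coming from the propagated volume bound. After $K$ steps the remainder carries the factor $2^{-K(K-1)}$ and tends to zero. Either way, the volume bound does essential quantitative work.

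For the propagation itself, use the Ecker-type test function $(1-|x-x_0|^2-2m(r-s))_+^3$, which is a subsolution of $\partial_r-\operatorname{div}_\top\D$. Your cruder inequality $\frac{d}{dt}\int\eta\d\mathrm{vol}\le C\int|\D^2\eta|$ would suffer from the same circularity.

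A minor point: reducing to the normal parametrization on noncompact $M$ requires the flow of the tangential field to exist globally, which is not automatic. Handle the tangential part directly instead; as the paper shows, it cancels after integration by parts.
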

Crucially, no further control of the behavior at infinity other than uniformly finite local volume and finite $\phi$-calibration energy is assumed here.
In fact, we will prove more general and relative versions; see \Cref{cor:main_calibration_detailed} below, which allows for a class of singular initial data and non-smooth calibrations, and also \Cref{thm:main_calibration_detailed}, where the assumption of uniformly finite local volume is also slightly weakened.

The energy identity \eqref{eq:energy_identity} is the core result of the paper. %For interpretations in the K\"ahler and Lagrangian cases, see \Cref{rem:interpretation_kaehler,rem:interpretation_lagrangian}. 
In \Cref{subsec:cone_intro}, we present a key application on the desingularization of calibrated cones. Moreover, in \Cref{subsec:intro_application} we discuss further geometric consequences.

The mean curvature flow is often called the \emph{geometric heat equation} and is therefore expected not to outperform the classical heat equation. 
\Cref{thm:main_calibration}, however, shows an improvement at the level of the underlying gradient flow structure.
Indeed, solutions to the heat equation on the real line may start with zero Dirichlet energy and immediately have infinite energy for every $t>0$, whereas \Cref{thm:main_calibration} rules out this behavior for mean curvature flow.
Other examples of unexpectedly better behavior include pseudolocality, see \cite{MR1117150,MR2379801,MR3455158,MR3909904}, and uniqueness \cite{MR2379801,MR4554469}.

A notion of ``gradient flow calibrations'' also appears in the theory of (multiphase) mean curvature flow of compact hypersurfaces by Fischer--Hensel--Laux--Simon \cite{FHLS}. While structurally related to our calibration energy, their concept of relative entropy is specifically tailored to obtain weak-strong uniqueness results. 

% We also mention that a notion of ``gradient flow calibrations'' appears in the theory of weak solutions of (multiphase) mean curvature flow of hypersurfaces by Fischer--Hensel--Laux--Simon \cite{FHLS}. Indeed, they introduce a notion of relative entropy that is structurally related to our calibration energy and measures the deviation of a weak solution from a given moving classical solution. 
% By establishing a dissipation inequality for the relative entropy, they prove powerful weak-strong uniqueness results in the finite-volume regime. Our calibration energy, on the other hand, measures deviation from a fixed calibrated geometry and is tailored to capture the gradient flow structure of noncompact smooth solutions through an exact dissipation identity, in arbitrary codimension.

In the unidirectional and curve case ($m=1$), \Cref{thm:main_calibration} was proven under additional bounded geometry assumptions in \cite[Section 4.2]{MR5008349}. 
%These assumptions are replaced here by the weaker condition \eqref{eq:intro_M_finite} that only needs to be imposed on the initial datum and is in fact redundant for $m=1$; see \Cref{rem:m=1_volume_bound}.
Here, we allow not only for any dimension $m$, but, more importantly, for an arbitrary calibration $\phi$, thereby admitting a much broader class of ends in high (co)dimensions. 

The proof of \Cref{thm:main_calibration} differs substantially from the one-dimensional argument in \cite{MR5008349}.
Besides the essential work to handle general dimensions and calibrations, we develop a purely ambient approach and rely on a new covering-argument step.
In this way, \Cref{thm:main_calibration} even improves the one-dimensional result \cite{MR5008349} by removing bounded geometry restrictions and conditions on the tangential speed; see also \Cref{rem:m=1_volume_bound}.

\subsection{Rigidity of calibrated cones}\label{subsec:cone_intro}

Our main geometric application is a rigidity theorem for calibrated cones.
In stark contrast to the volume functional, whose zero set is trivial, the calibration energy has a rich zero set consisting of calibrated geometries.
Moreover, thanks to the admissibility of non-smooth data and calibrations, this zero-energy regime includes a broad class of singular calibrated cones.
The energy identity then yields a strong rigidity consequence: a mean curvature flow starting from such a zero-energy configuration cannot dissipate the energy and hence cannot move.
In particular, we obtain the following

\begin{theorem}[\Cref{thm:calibrated_cone_rigidity,cor:calibrated_cone_rigidity_asymptotic_expander}]\label{thm:cone_rigidity_intro}
    A locally integral calibrated conical $m$-cycle in $\R^n$ cannot be desingularized by any smooth proper oriented mean curvature flow.
    In particular, if a smooth proper oriented self-expander has a calibrated cone as a varifold-current blow-down, then the self-expander must be a union of planes.
\end{theorem}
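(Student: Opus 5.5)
The plan is to apply the energy identity (\cref{eq:energy_identity}) in its generalized form for singular initial data and possibly non-smooth calibrations (\Cref{cor:main_calibration_detailed}).

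Let $C$ be a locally integral calibrated conical $m$-cycle, calibrated by some $\phi$. Here $\phi$ may be homogeneous of degree zero, hence singular at the origin, e.g.\ the pullback of a calibration on the link. Suppose a smooth proper oriented mean curvature flow $F\colon M\times(0,T)\to\R^n$ desingularizes $C$, i.e.\ $F_t$ converges to $C$ as $t\downarrow0$ as varifolds and as currents. I will argue in three steps.

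\textbf{Step 1 (hypotheses).} Check the hypotheses of \Cref{cor:main_calibration_detailed} with initial datum $C$. Since $C$ is a cone of finite density, it has uniformly finite local volume: $\mu_C(B_1(x_0))\le \Theta\,\omega_m$ by the monotonicity of $r^{-m}\mu_C(B_r(x_0))$ for stationary cones. Since $C$ is calibrated, $\E_\phi[C]=0$.

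\textbf{Step 2 (energy limit).} Show that $\lim_{t\downarrow0}\E_\phi[F_t]=\E_\phi[C]=0$. This is where the corollary's notion of singular initial data must absorb both the singularity of $\phi$ at $0$ and the convergence as $t\downarrow0$. I would try to write
\[
\E_\phi[F_t]=\mu_{F_t}(\psi)-\int F_t^*\phi\,\psi+\text{(cutoff errors)}
\]
for cutoffs $\psi$. The volume term converges by varifold convergence, and the pairing with $\phi$ converges by current convergence. The cutoff errors near $0$ and near infinity are controlled by the local volume bound and finite energy, precisely the localization already handled in the proof of \Cref{thm:main_calibration}.

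\textbf{Step 3 (conclusion).} Since $\E_\phi\ge0$, the identity on $[s,t]$ with $s\downarrow0$ gives
\[
\E_\phi[F_t]+\int_0^t\!\!\int_M|H|^2=0.
\]
Hence $H\equiv0$ and each $F_t$ is calibrated. Then $F_t$ is static (up to tangential reparametrization), so $F_t=\lim_{s\downarrow 0}F_s=C$ as varifolds. But $F_t$ is smooth, so $C$ must be smooth, hence a union of planes. A cone smooth at the origin is a plane with multiplicity, or a union of planes in the immersed case. In either case nothing is desingularized, contradicting the assumption that $C$ was singular.

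\textbf{Self-expander corollary.} For a self-expander $\Sigma$, the flow $F_t=\sqrt{t}\,\Sigma$, $t>0$, is a proper oriented MCF converging as $t\downarrow0$ to its blow-down cone $C$. If $C$ is calibrated, the argument above gives $H\equiv0$. A minimal self-expander satisfies $x^\perp=2tH=0$, so it is a cone, and being smooth it is a union of planes.

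\textbf{Main obstacle.} I expect the main obstacle to be Step 2: establishing $\E_\phi[F_t]\to0$ (or at least $\limsup\le0$) from mere varifold-current convergence. This needs uniform local volume bounds for $F_t$ near $t=0$ (from the cone's density and Huisken-type monotonicity), together with handling the singular point of $\phi$.
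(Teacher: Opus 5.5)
Your proposal is correct and follows the paper's route. Monotonicity for the stationary (because calibrated) cone gives the uniform local volume bound for $\|C\|$, and $\E_\phi[\|C\|,C]=0$. \Cref{cor:main_calibration_detailed}, packaged as \Cref{thm:calibrated_initial_trace}, then forces $H\equiv0$ and $(\mu_t,\T_t)=(\|C\|,C)$, so $C$ is carried by a smooth proper immersion and is a finite union of planes; the self-expander case follows from the flow $\sqrt{2t}\,F$.

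Two remarks. First, your Step 2 is not an obstacle: it is already part of the corollary's conclusion \eqref{eq:initial_trace_energy_convergence}. Your stand-alone static-cutoff sketch would in any case give only lower semicontinuity; the needed upper bound comes from the localized dissipation and covering/Gr\"onwall argument in the proof of \Cref{thm:main_calibration_detailed}. Second, a merely sequential blow-down $\lambda_j\to0^+$ is upgraded to an oriented initial trace by \Cref{prop:seq_initial_trace}.
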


The resolution of singular initial data is a central topic in mean curvature flow. 
For instance, Ilmanen's ``resolution of point singularities'' conjecture is recently resolved for hypersurfaces with isolated conical singularities by Chodosh--Daniels-Holgate--Schulze in dimensions $2\leq m\leq6$ \cite[Theorems~1.2--1.4]{MR4824733}.
In particular, there even exist nontrivial smooth mean curvature flows starting, in an appropriate sense, from stationary singular cones, thereby providing a dynamical desingularization of them.
%In particular, it is worth emphasizing that even if the cone itself is stationary, a nontrivial smooth mean curvature flow may emanate from it.

In codimension one, the existing theory exhibits a striking minimizing/non-minimizing dichotomy for minimal hypercones.
On the non-minimizing side, extending Ding's result \cite[Theorem~1.1]{MR4055164}, Wang proved that if a viscosity mean-convex hypercone is not area-minimizing, then it admits a smooth properly embedded strictly mean-convex self-expander asymptotic to the cone \cite[Theorem~1.2]{MR4706448}, thus allowing for a self-similar desingularization.
In fact, some cones can even be desingularized in a non self-similar way \cite{MR4530317,chen2022existence,bernstein2024existence}.
On the minimizing side, Wang also recently extended in \cite[Theorem~1.1]{MR4706448} the construction of Hardt--Simon's foliation \cite[Theorem~2.1]{MR809969} from area-minimizing hypercones with isolated singularities to arbitrary singularities.
Together with weak-set comparison (namely, the fact that a level-set flow cannot collide with the support of a stationary varifold \cite[Theorem~7.8]{MR2545035}), this implies that the level-set flow of an area-minimizing hypercone is static.
In particular, any area-minimizing singular hypercone cannot be a measure-theoretic initial datum of a smooth flow.
%verified via Brakke flows
%(See also \cite[Lemma~6.6]{Ding} for a special case of self-expanders.)

In higher codimensions, analogous resolution phenomena play an important role in Lagrangian mean curvature flow.
Joyce--Lee--Tsui constructed zero-Maslov self-expanders asymptotic to transverse pairs of Lagrangian planes in the non-minimizing regime \cite[Theorems~C--D]{MR2629511}; uniqueness of such expanders is obtained by Lotay--Neves for $m=2$ \cite{MR3190297} and Imagi--Joyce--Oliveira dos Santos for $m\geq3$ \cite{MR3482334}.
Begley--Moore \cite{MR3623231} and Ghosh \cite{ghosh2025short} used these expanders to construct smooth compact Lagrangian mean curvature flows from isolated conical singularities.

By contrast, the minimizing side in higher codimensions is much less developed, due to the lack of the one-sided barrier and comparison mechanisms.
In the special setting of Lagrangian self-expanders asymptotic to two transverse planes, Neves observed rigidity for an area-minimizing plane pair \cite[Remark~4.3(4)]{MR2299559}, while related nonexistence phenomena also follow from the classification in \cite[Theorem 4.2]{MR3482334}.

Our result establishes a general dynamical rigidity principle for calibrated conical cycles in arbitrary codimension.
The positive-time evolution may be any smooth proper oriented mean curvature flow having the cone merely as a measure-current initial trace (see \Cref{def:initial_trace} below); in particular, the flow is not assumed to be self-similar, Lagrangian, nor to satisfy any other special structure.
The argument requires neither a foliation nor a barrier construction, but is purely variational.
Thus, once the oriented initial cone is calibrated, our result applies in a substantially broader geometric and dynamical framework than the existing higher-codimensional rigidity theory for Lagrangian self-expanders asymptotic to pairs of planes.

The class of admissible calibrated cones is quite broad. 
It contains, for instance, the Simons cone and all homogeneous area-minimizing hypercones \cite[Theorem~1.9]{MR3568077}, and, more importantly, a large family of higher-codimensional oriented cones satisfying Lawlor's curvature criterion \cite[Theorem~1.1]{MR5057730}, as well as pairs of two oriented planes satisfying the angle criterion \cite{MR912845,MR974911}.

It is worth emphasizing that, even in higher codimensions, calibrated cones may nevertheless admit smooth \emph{static} desingularizations: for example, certain pairs of special Lagrangian planes admit Lawlor necks \cite{MR974911,MR3482334}.
Thus the obstruction here is purely dynamical.

\subsection{Solitons and immortal solutions}\label{subsec:intro_application}

We also provide some further geometric consequences of the energy identity \eqref{eq:energy_identity}.
The results may be summarized as follows, all under the assumption of finite calibration energy:
\begin{itemize}
    \item Uniqueness of self-expanders (\Cref{thm:self_expander}), and applications to rescaled limits of graphical flows (\Cref{thm:rescaled_limit}).
    \item Rigidity of translators (\Cref{thm:translator_constant_coeff}), and uniqueness under additional assumptions (\Cref{cor:translator_plane}).
    \item No breather theorems in the nonshrinking case (\Cref{thm:no_breather_1,thm:no_breather_2}).
    \item Asymptotic planarity of two-dimensional immortal mean curvature flows and full convergence in the unidirectional case (\Cref{thm:2d-conv}).
    \item A Pohozaev-type identity for self-shrinkers (\Cref{prop:self-shrinker_Pohozaev}) and a gap theorem in dimension two (\Cref{thm:shrinker_2D}).
\end{itemize}

These results also appear to be new, to the best of the authors’ knowledge. They will be presented in detail and compared with previous works in \Cref{sec:application}.

\subsection{Organization}
This paper is organized as follows. After reviewing some notation, \Cref{sec:energy_dissipation} is devoted to proving the main energy identity, \Cref{thm:main_calibration}, and its more general version, \Cref{thm:main_calibration_detailed}. In \Cref{sec:application}, we discuss the various applications highlighted in \Cref{subsec:cone_intro,subsec:intro_application}; in particular, in \Cref{sec:cone} we prove \Cref{thm:cone_rigidity_intro}. In \Cref{sec:finite_energy}, for the reader's convenience, we discuss the class of immersions with finite calibration energy in detail and reinterpret the energy in several important settings, including the unidirectional, K\"ahler, and special Lagrangian cases. 

\begin{acknowledgements}
    Part of this work was conducted during the thematic program \emph{Free Boundary Problems} in 2025 at the \emph{Erwin Schr\"odinger International Institute for Mathematics and Physics}, whose hospitality is gratefully acknowledged.
    TM is supported by JSPS KAKENHI Grant Numbers JP23H00085 and JP24K00532.
    FR is funded, in whole or in part, by the Austrian Science Fund (FWF), grant number \href{https://doi.org/10.55776/ESP557}{10.55776/ESP557}.
    The authors would like to thank Giovanni Alberti, Jacob Bernstein, Ulrich Menne, and Felix Schulze for helpful comments.
\end{acknowledgements}

\section{Energy dissipation identity}\label{sec:energy_dissipation}

\subsection{Preliminaries and notation}\label{subsec:notation}
For the convenience of the reader, we briefly review some basic geometric and measure-theoretic concepts and notation.

For an open set $U\subset\R^n$ and $k\in\N_0$, we write $\Omega^k(U)$ for the set of (smooth) \emph{$k$-forms on $U$} and $\Omega^k_c(U)$ for the $k$-forms with compact support. We use $\d$ for the \emph{exterior derivative}, $\wedge$ for the \emph{wedge product}, and $\iota_X$ for the \emph{interior product} with a vector field $X$. We also use $\langle\cdot,\cdot\rangle$ for the \emph{inner product of $k$-vectors}.
The \emph{comass} of $\phi\in\Omega^m(U)$ is defined by
\[
    \|\phi\|_U^*\vcentcolon=\sup\{\phi_x(\xi)\mid \xi\in\mathbf{Gr}_m^+(\R^n),\ x\in U\}.
\]
Recall that $\phi\in\Omega^m(U)$ is a calibration if $\d\phi=0$ and $\|\phi\|_U^*=1$.
For $x\in U$ we write
\[
G(\phi_x)\vcentcolon =\{\xi\in\mathbf{Gr}_m^+(\R^n)\mid \phi_x(\xi)=1\},
\]
and if $\phi$ has constant coefficients, we simply write
\[
G(\phi)\vcentcolon =\{\xi\in\mathbf{Gr}_m^+(\R^n)\mid \phi(\xi)=1\}.
\]

We will also use differential forms with locally integrable coefficients. If $\alpha\in L^1_{\mathrm{loc}}(U;\Lambda^k\R^n)$, then its \emph{distributional exterior derivative} $\d\alpha$ is defined by
\begin{equation}\label{eq:distributional_exterior_derivative}
    \langle \d\alpha,\eta\rangle
    \vcentcolon=(-1)^{k+1}\int_U\alpha\wedge\d\eta,
    \qquad
    \eta\in C_c^\infty(U;\Lambda^{n-k-1}\R^n).
\end{equation}
Thus $\d\alpha=0$ distributionally if and only if $\int_U\alpha\wedge\d\eta=0$ for every such test form $\eta$. This agrees with the usual exterior derivative whenever $\alpha$ is smooth.

Following Harvey--Lawson \cite[Definition~II.4.7]{MR666108}, a \emph{coflat calibration} of degree $m$ on $U$ is a calibration $\phi$ on $U\setminus S_\phi$, where $S_\phi\subset U$ is relatively closed and $\mathcal{H}^m(S_\phi)=0$, where $\mathcal{H}^m$ denotes the $m$-dimensional Hausdorff measure. 
By setting $\phi=0$ on $S_\phi$, we regard $\phi$ as an $m$-form with locally bounded measurable coefficients on the whole set $U$. By \cite[Lemma~II.4.8]{MR666108}, this extension satisfies
\begin{equation}\label{eq:coflat_distributionally_closed}
    \d\phi=0\qquad\text{in the distributional sense}.
\end{equation}
The case $S_\phi=\emptyset$ is the usual smooth calibration.
The calibration energy is still well defined for a coflat calibration $\phi$ and a proper oriented immersion $F\colon M^m\to\R^n$, since then $\mathrm{vol}(F^{-1}(S_\phi))=\mu_F(S_\phi)=0$.

We now turn to immersed submanifolds. Given an immersion $F\colon M^m\to\R^n$ and writing $F^*$ for the pullback along $F$, we equip $M$ with the induced metric $g_F=F^*\langle\cdot,\cdot\rangle$ so that $F$ is isometric. We also write $\nabla$ for the Levi-Civita connection on $M$ and $\D$ for the usual derivative on $\R^n$. The symbol $|\cdot|$ is used both for the norm of $k$-vectors in $\R^n$ as well as for the norm on tensors on $M$ induced by the metric $g$. Further, for the sake of brevity, we refer to $F\colon M\to\R^n$ as complete/connected/oriented, provided $(M,g_F)$ is complete/connected/oriented.

The induced curvature quantities are the \emph{(vectorial) second fundamental form} $A_F$ and its trace $H_F$, the \emph{mean curvature vector}.
The induced \emph{volume measure} and (if $F$ is oriented) the \emph{tangential Gauss map} are given by
\begin{align}\label{eq:dvol}
    \d\mathrm{vol}_F=|\partial_1F\wedge\dots\wedge\partial_mF|\d x^1\cdots\d x^m
\end{align}
and
\begin{equation}\label{eq:tangent_field}
    \tau_F\vcentcolon=\frac{\partial_1F\wedge\cdots\wedge\partial_mF}{|\partial_1F\wedge\cdots\wedge\partial_mF|}
\end{equation}
in (positive) local coordinates $(x^1,\dots,x^m)$.
We will usually drop the $F$-dependence in the subscript when the respective immersion is clear from the context. Similarly, whenever there is no ambiguity, we will omit the $t$-dependence along families of immersions.

It follows from \eqref{eq:dvol}--\eqref{eq:tangent_field} that, provided that the integrals exist, we have
\begin{align}\label{eq:pullback}
    \int_M\phi(\tau)\d\mathrm{vol}=\int_MF^*\phi.
\end{align}
For convenience and by abuse of notation, if $B\subset\R^n$ and $\zeta\colon M\to\R$, we write
\begin{align}\label{eq:integral_notation}
    \int_B\zeta\d\mu\vcentcolon=\int_{F^{-1}(B)}\zeta\d\mathrm{vol},
\end{align}
whenever the right hand side exists.

For a proper oriented immersion $F\colon M^m\to\R^n$, let $\T_F$ denote the associated $m$-current in $\R^n$ defined as usual by
\[
    \T_F(\omega)\vcentcolon=\int_MF^*\omega,
    \qquad \omega\in\Omega_c^m(\R^n).
\]
Then $\T_F$ is a locally integral $m$-cycle, cf.\ \cite{MR257325}. In general, if $\T$ is a locally integral $m$-current in $\R^n$, then we can write $\T=\vec{T}\,\|\T\|$ by local integrality, where $\|\T\|$ is the mass of $\T$ and  $\vec{T}$ is a simple unit $m$-vector $\|\T\|$-a.e.
That way, the action of a current also extends to forms with non-smooth coefficients: If $U\subset\R^n$ is open and $\alpha$ is an $m$-form with compact support in $U$ whose coefficients belong to $L^1_{\mathrm{loc}}(\|\T\|\mres U)$, we may define (by extending $\alpha$ to be zero outside $U$)
\[
\T(\alpha) \vcentcolon= \int_{\R^n}\alpha(\vec{T})\d\|\T\|.
\]

Now we introduce a suitable weak notion of initial data in our setting. Along an oriented flow $F_t$, we often use the shorthand $\mu_t\vcentcolon=\mu_{F_t}$ and $\T_t\vcentcolon=\T_{F_t}$ for the associated measures and currents.

\begin{definition}[Oriented initial trace]\label{def:initial_trace}
    Let $F\colon M^m\times(0,T)\to\R^n$ be a proper oriented mean curvature flow. 
    We say that a pair $(\mu_0,\T_0)$ of a Radon measure $\mu_0$ on $\R^n$ and an $m$-current $\T_0$ in $\R^n$ is an \emph{oriented initial trace} of $F$ if, as $t\to0^+$,
    \begin{align}
        \mu_t\rightharpoonup\mu_0 \quad \text{as Radon measures},\quad 
        \T_t\rightharpoonup \T_0 \quad \text{as currents}.
        \label{eq:initial_trace}
    \end{align}
\end{definition}

Since $\|\T_t\|\leq\mu_t$ and since $\partial\T_t=0$, the convergence $\mu_t\rightharpoonup\mu_0$ yields uniform local mass bounds for $\T_t$. 
Hence, by the Federer--Fleming compactness theorem \cite[Theorem 27.3]{MR756417} and by $\T_t\rightharpoonup\T_0$, the limit $\T_0$ is a locally integral $m$-cycle.
Moreover, $\|\T_0\|\leq\mu_0$.
Clearly, any proper oriented flow $F\colon M^m\times[0,T)\to\R^n$ smooth up to $t=0$ has the canonical and unique oriented initial trace $(\mu_{F_0},\T_{F_0})$.

Let $U\subset\R^n$ be open, let $\phi$ be a coflat calibration on $U$ (extended by zero on $S_\phi$).
For an oriented initial trace $(\mu_0,\T_0)$, if $\mu_0$ is concentrated on $U$, we define
\begin{equation}\label{eq:current_calibration_energy}
    \E_\phi[\mu_0,\T_0]
    \vcentcolon=\int_{\R^n}\d\bigl(\mu_0-\phi(\vec{T}_0)\|\T_0\|\bigr).
\end{equation}
Here we note that $\mu_0-\phi(\vec{T}_0)\|\T_0\|=(\mu_0-\|\T_0\|)+(1-\phi(\vec{T}_0))\|\T_0\|$ defines a nonnegative Radon measure.
Since $\mu_0$ is concentrated on $U$, so is $\|\T_0\|$, and thus \eqref{eq:current_calibration_energy} is well-defined even though $\phi$ may not be defined outside of $U$.
In particular, $\E_\phi[\mu_0,\T_0]=0$ holds if and only if
\begin{equation}\label{eq:calibrated_initial_trace}
        \mu_0=\|\T_0\|,
        \qquad
        \phi(\vec{T}_0)=1\quad\|\T_0\|\text{-a.e.,}
\end{equation}
so that $\T_0$ is calibrated by $\phi$.
If the oriented initial trace is induced by a proper oriented immersion $F_0\colon M^m\to\R^n$, then, as Radon measures on $\R^n$, 
$$\mu_{F_0}-\phi(\vec{T}_{F_0})\|\T_{F_0}\| = (F_0)_\#\big( (1-\phi(\tau_{F_0}))\mathrm{vol}_{F_0}\big),$$ 
cf.\ \eqref{eq:pullback}, so that
\[
    \E_\phi[\mu_{F_0},\T_{F_0}]=\E_\phi[F_0],
\]
even when oppositely oriented coincident sheets cancel in the associated current. 

\subsection{A general version of the energy identity}\label{subsec:energy_identity}
We now present a more general version of \Cref{thm:main_calibration}.
We work in an open subset in order to admit the widest possible class of admissible ends.

\begin{theorem}\label{thm:main_calibration_detailed}
    Let $F\colon M^m\times(0,T)\to\R^n$ be a proper oriented mean curvature flow.
    Let $U\subset\R^n$ be an open set such that
    \begin{equation}\label{eq:inclusion}
        F_t(M)\subset U\qquad\text{for all }t\in(0,T),
    \end{equation}
    and let $\phi$ be a coflat calibration of degree $m$ on $U$.
    Suppose that
    \begin{equation}\label{eq:local_calibration_energy_bound}
        \sup_{0<t\leq T'}\sup_{x_0\in\R^n}\int_{B_1(x_0)}(1-\phi(\tau))\d\mu_t<\infty
        \qquad\text{for all }T'<T,
    \end{equation}
    and  $F$ has an oriented initial trace $(\mu_0,\T_0)$ with $\mu_0$ concentrated on $U$.
    If $\E_\phi[\mu_0,\T_0]<\infty$, then, for all $0\leq t_1\leq t_2<T$,
    \begin{equation}\label{eq:initial_trace_energy_identity}
        \E_\phi[F_{t_2}]+\int_{t_1}^{t_2}\int_M|H|^2\d\mathrm{vol}\d t
        =\E_\phi[F_{t_1}],
    \end{equation}
    where we interpret $\E_\phi[F_0]\vcentcolon=\E_\phi[\mu_0,\T_0]$.
    In particular,
    \begin{equation}\label{eq:initial_trace_energy_convergence}
        \lim_{t\to0^+}\E_\phi[F_t]=\E_\phi[\mu_0,\T_0].
    \end{equation}
\end{theorem}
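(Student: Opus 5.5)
The plan is to prove a localized energy identity with cutoff functions and then remove the cutoffs. We use the localization estimate \eqref{eq:local_calibration_energy_bound} and a covering argument to control the error terms.

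\textbf{Step 1: local identity.} Fix a cutoff $\eta_R(x)=\eta(x/R)$ with $\eta\in C_c^\infty(B_2)$, $0\le\eta\le1$, $\eta=1$ on $B_1$. For $0<t_1\le t_2<T$ the flow is smooth and proper, so all integrals are finite. We compute
\[
\frac{d}{dt}\int_M\eta_R(F)\,(1-\phi(\tau))\d\mathrm{vol}=\frac{d}{dt}\Bigl(\int_M\eta_R\d\mathrm{vol}-\int_M\eta_R F^*\phi\Bigr).
\]
The first term is the standard localized volume formula, giving $-\int\eta_R|H|^2+\int\langle \D\eta_R,H\rangle$. For the second term we use Cartan's formula. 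Since $\d\phi=0$,
\[
\partial_t F^*\phi=\d\bigl(F^*(\iota_{\partial_tF}\phi)\bigr).
\]
Integrating by parts against $\eta_R$ leaves only a term involving $\d\eta_R\wedge F^*(\iota_{\partial_tF}\phi)$. The tangential part of $\partial_tF$ is then absorbed: with the volume term it forms the flux of the difference. Writing $V=\partial_tF=H+V^\top$, the combined boundary error is
\[
\int_M \bigl[\langle \D\eta_R,H\rangle\bigl(1-\ldots\bigr)\bigr]\text{-type terms},
\]
bounded pointwise by $C|\D\eta_R|\,|H|\sqrt{1-\phi(\tau)}$. This uses the algebraic fact that for a unit simple $\xi$ with $\phi(\xi)$ close to $1$, the form $\iota_{\nu}\phi$ restricted to the plane is $O(\sqrt{1-\phi(\xi)})$ for normal $\nu$, and the tangential contributions cancel exactly. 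This pointwise linear-algebra lemma for comass-one forms (first-order stationarity of $\xi\mapsto\phi(\xi)$ at its maximum on the Grassmannian) is the heart of the null-Lagrangian structure. For coflat $\phi$ we run the computation on $U\setminus S_\phi$ and justify it distributionally via \eqref{eq:coflat_distributionally_closed}, using that $\mu_t(S_\phi)=0$.

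\textbf{Step 2: removing the cutoff.} Integrating in time, we obtain
\[
\int\eta_R(1-\phi)\d\mu_{t_2}+\int_{t_1}^{t_2}\!\!\int\eta_R|H|^2\le\int\eta_R(1-\phi)\d\mu_{t_1}+\frac{C}{R}\int_{t_1}^{t_2}\!\!\int_{B_{2R}\setminus B_R}|H|\sqrt{1-\phi(\tau)}.
\]
By Cauchy--Schwarz the error is at most
\[
\frac{C}{R}\Bigl(\int\!\!\int_{\mathrm{ann}}|H|^2\Bigr)^{1/2}\Bigl(\int\!\!\int_{\mathrm{ann}}(1-\phi)\Bigr)^{1/2}.
\]
This is the main obstacle: we must show the local energy on annuli stays controlled by the global energy, not by $R^m$. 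I would first prove a differential inequality for $\mathcal{E}_R(t)=\sup_{x_0}\int\eta_{R,x_0}(1-\phi)\d\mu_t$. Using a covering of $B_{2R}$ by unit balls together with the uniform local bound \eqref{eq:local_calibration_energy_bound}, absorbing $\int\eta|H|^2$ via Young's inequality yields $\mathcal{E}(t_2)\le\mathcal{E}(t_1)+Ct/R^2\cdot(\ldots)$. A Gronwall/iteration argument then shows the global energy $\E_\phi[F_t]$ is finite and bounded by $\E_\phi[F_{t_1}]$. With $\int\int|H|^2$ and the energy globally finite, the annular error tends to $0$ as $R\to\infty$ by dominated convergence, giving exact equality \eqref{eq:initial_trace_energy_identity} for $t_1>0$.

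\textbf{Step 3: $t_1=0$.} Take the localized inequality on $[s,t_2]$ and let $s\to0^+$. The convergence $\mu_s\rightharpoonup\mu_0$ and $\T_s\rightharpoonup\T_0$ gives $\int\eta_R\d\mu_s-\T_s(\eta_R\phi)\to\int\eta_R\,\d(\mu_0-\phi(\vec T_0)\|\T_0\|)$. When $\phi$ is non-smooth, we first mollify $\eta_R\phi$ and use $\d\phi=0$ to pass through boundary terms; the uniform local mass bound from Step 2 controls the error. Letting $R\to\infty$ as before gives \eqref{eq:initial_trace_energy_identity} with $t_1=0$. The convergence \eqref{eq:initial_trace_energy_convergence} then follows because the dissipation integral over $(0,t)$ tends to $0$.
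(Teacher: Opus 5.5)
Your architecture matches the paper's: a localized evolution identity, a boundary term bounded by $|\D\tilde\eta|\,|H|\sqrt{1-\phi(\tau)}$, Young absorption, a supremum over centers, Gr\"onwall, and $R\to\infty$. However, the two non-routine steps are either missing or stated in a form that does not work, and Steps 2 and 3 are ordered circularly.

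\textbf{Closing the estimate at large scales.} Write $\nu_r(B)=\int_B(1-\phi(\tau))\d\mu_r$ and let $K$ be the supremum in \eqref{eq:local_calibration_energy_bound}. Covering $B_{2R}$ by unit balls and using \eqref{eq:local_calibration_energy_bound} gives only $\nu_r(B_{2R}(x_0))\le CKR^n$. Then the error $\frac{C}{R^2}\int\nu_r(B_{2R}(x_0))\d r$ is of order $R^{n-2}$ and does not vanish, so your ``$(\ldots)$'' is exactly where the argument has to happen. The paper sets $\E(t,R)=\sup_{x_0}\nu_t(B_R(x_0))$ and uses the squared cutoff $\tilde\eta=\tilde\gamma^2$. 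Then $|\D\tilde\eta|\le C\tilde\gamma/R$, so the Young term is absorbed by $\int_M|H|^2\eta\d\mathrm{vol}$, giving
\[
\E(t,R)\le\E_\phi[\mu_0,\T_0]+\frac{C}{R^2}\int_0^t\E(r,2R)\d r .
\]
This is closed with the scale-invariant covering $\E(r,2R)\le\Gamma(n)\E(r,R)$: a ball of radius $2R$ is covered by $\Gamma(n)$ balls of radius $R$. Gr\"onwall then yields $\E(t,R)\le\E_\phi[\mu_0,\T_0]e^{C\Gamma t/R^2}$, uniformly as $R\to\infty$. Here \eqref{eq:local_calibration_energy_bound} is used only qualitatively, to know that $\E(\cdot,R)$ is bounded. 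A dyadic iteration over $R,2R,4R,\dots$ that uses your unit-ball bound only to kill the tail would also close, but some such mechanism must be supplied.

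This estimate must also be started at $t=0$. Finiteness of $\E_\phi[F_{t_1}]$ for $t_1>0$ is not a hypothesis; it is an output of this very estimate. So your Step 2 bound in terms of $\E_\phi[F_{t_1}]$ is vacuous until the initial-trace step is done. The paper's order is: prove $\nu_s\rightharpoonup\nu_0$ first, run Gr\"onwall from time $0$ to get $\E_\phi[F_t]+\int_0^t\int_M|H|^2\d\mathrm{vol}\d r<\infty$, derive the exact identity on $[0,t]$, and obtain the case $0<t_1\le t_2$ by subtraction.

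\textbf{The limit $s\to0^+$ for coflat $\phi$.} You need $\T_s(\zeta\phi)\to\T_0(\zeta\phi)$, where $\phi$ is discontinuous on $S_\phi$ and undefined outside $U$. Mollifying and bounding the error by local mass bounds does not let you interchange $\delta\to0$ and $s\to0^+$. The reason is that $\phi_\delta\to\phi$ only locally uniformly on $U\setminus S_\phi$, and nothing in your argument prevents $\mu_s$ from concentrating on $S_\phi$ as $s\to0^+$. Integrality gives $\|\T_0\|(S_\phi)=0$, but not $\mu_0(S_\phi)=0$.

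The paper handles this in three moves:
\begin{enumerate}
\item It cuts off with $\rho\in C_c^\infty(U)$, equal to $1$ on $\operatorname{supp}\zeta\setminus O$, where $O$ is a neighborhood of $(\R^n\setminus U)\cap\operatorname{supp}\zeta$ with $\mu_0(O)<\varepsilon$.
\item It upgrades $\T_s\rightharpoonup\T_0$, together with the local mass bounds, to local flat convergence: $\T_s-\T_0=\mathbf A_s+\partial\mathbf B_s$ with $\|\mathbf A_s\|_W+\|\mathbf B_s\|_W\to0$ on a bounded $W\subset\subset U$ containing $\operatorname{supp}\rho$.
\item Since $\d\phi_\delta=0$ on $W$ and $\phi_\delta$ has comass at most $1$, it gets $|(\T_s-\T_0)(\rho\zeta\phi_\delta)|\le3\|\rho\|_{C^1}\|\zeta\|_{C^1}(\|\mathbf A_s\|_W+\|\mathbf B_s\|_W)$ \emph{uniformly in $\delta$}. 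Only then does it let $\delta\to0$ at each fixed $s\ge0$, using $\|\T_s\|(S_\phi)=0$.
\end{enumerate}
Your phrase ``use $\d\phi=0$ to pass through boundary terms'' would have to become this flat-norm step, together with the $\varepsilon$-control near $\R^n\setminus U$.
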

It is in fact enough that the initial trace is attained in a sequential sense, see \Cref{prop:seq_initial_trace} below.

To prove \Cref{thm:main_calibration_detailed}, we first compute the general localized evolution of the calibration energy.
Hereafter we use the following notation:
for an $m$-form $\phi$, a map $W\colon M^m\to\R^n$, and an immersion $G\colon M^m\to U$, we define $\beta^\phi_{W,G}$ almost everywhere by
\begin{align}\label{eq:def_beta}
    (\beta^\phi_{W,G})_p(X_1,\dots,X_{m-1})\vcentcolon=\phi_{G(p)}(W(p),\d G_p(X_1),\dots,\d G_p(X_{m-1})).
\end{align}
The values on $G^{-1}(S_\phi)$ are irrelevant as $\mathrm{vol}_G(G^{-1}(S_\phi))=0$.

\begin{lemma}\label{lem:loc_calib_evolution}
Let $F\colon M\times(0,T)\to\R^n$ be a proper oriented flow with normal velocity $V\vcentcolon=(\partial_tF)^\perp$. 
Let $U\subset\R^n$ be an open set and let $\phi$ be a coflat calibration of degree $m$ on $U$, with $F_t(M)\subset U$ for all $t\in(0,T)$.
Let $\tilde\eta\in C_c^\infty(\R^n)$ and set $\eta\vcentcolon=\tilde\eta\circ F$. Then the map
\[
    (0,T)\ni t\mapsto\int_M(1-\phi(\tau))\eta\d\mathrm{vol}
\]
is locally absolutely continuous and, for almost every $t$, we have
\begin{align}
    &\partial_t\int_M(1-\phi(\tau))\eta\d\mathrm{vol}+\int_M\langle H,V\rangle\eta\d\mathrm{vol}\\
    &\quad=\int_M(1-\phi(\tau))\langle\D\tilde\eta\circ F_t,V\rangle\d\mathrm{vol}+\int_MF_t^*(\d\tilde\eta)\wedge\beta,
    \label{eq:localized_calibration_evolution}
\end{align}
where $\beta\vcentcolon=\beta^\phi_{V(\cdot,t),F_t}$ as defined in \eqref{eq:def_beta}.
\end{lemma}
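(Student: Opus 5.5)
We plan to split the integrand as $(1-\phi(\tau))\eta\d\mathrm{vol}=\eta\d\mathrm{vol}-F_t^*(\tilde\eta\phi)$, using \eqref{eq:pullback}, and to differentiate the two pieces separately. Fix $[t_1,t_2]\subset(0,T)$. By properness, $K\vcentcolon=F^{-1}(\operatorname{supp}\tilde\eta)\cap(M\times[t_1,t_2])$ is compact, so every integral below is over a compact set. Its image $F(K)$ is a compact subset of $U$.

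\emph{Volume part.} This is the classical first variation. Write $\partial_tF=V+T$ with $T$ tangential. Then
\[
\partial_t\d\mathrm{vol}=\bigl(-\langle H,V\rangle+\Div_M T\bigr)\d\mathrm{vol},\qquad \partial_t\eta=\langle\D\tilde\eta\circ F,V\rangle+\langle\D\tilde\eta\circ F,T\rangle.
\]
Integrating the tangential terms by parts removes them, because $\int_M(\eta\Div_MT+\langle\nabla\eta,T\rangle)\d\mathrm{vol}=0$. This gives
\[
\partial_t\int_M\eta\d\mathrm{vol}=-\int_M\eta\langle H,V\rangle\d\mathrm{vol}+\int_M\langle\D\tilde\eta\circ F,V\rangle\d\mathrm{vol},
\]
and this is smooth in $t$.

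\emph{Calibration part, smooth $\phi$.} Set $\omega=\tilde\eta\phi$, a compactly supported form on $U$. Cartan's formula for the time derivative of a pullback gives
\[
\partial_tF_t^*\omega=\d\bigl(F_t^*\iota_{\partial_tF}\omega\bigr)+F_t^*\iota_{\partial_tF}\d\omega.
\]
The exact term integrates to zero. Since $\phi$ is closed, $\d\omega=\d\tilde\eta\wedge\phi$, and
\[
\iota_W(\d\tilde\eta\wedge\phi)=\langle\D\tilde\eta,W\rangle\phi-\d\tilde\eta\wedge\iota_W\phi .
\]
The pullback of $\iota_W$ of an $(m+1)$-form to the $m$-manifold $M$ vanishes when $W$ is tangent. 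Hence $\partial_tF$ may be replaced by $V$, and we obtain
\[
\partial_t\int_MF_t^*(\tilde\eta\phi)=\int_M\phi(\tau)\langle\D\tilde\eta\circ F,V\rangle\d\mathrm{vol}-\int_MF_t^*(\d\tilde\eta)\wedge\beta .
\]
Subtracting this from the volume part yields \eqref{eq:localized_calibration_evolution}. Equivalently, the homotopy formula on $M\times[t_1,t_2]$ gives the time-integrated identity. That integrated form already proves local absolute continuity, since the right-hand side has integrand in $L^1(t_1,t_2)$: the coefficients of $\phi$ are bounded and $K$ is compact.

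\emph{Coflat $\phi$, the main obstacle.} Here $\phi$ is only smooth on $U\setminus S_\phi$. Mollification is unsuitable, because it converges only Lebesgue-a.e. while the surfaces are Lebesgue-null. Instead we will cut off near $S_\phi$, in the spirit of \cite[Lemma~II.4.8]{MR666108}. Cover the compact set $S_\phi\cap F(K)$ by finitely many balls $B_{r_i}(y_i)$ with $\sum_i r_i^m<\varepsilon$. Let $\chi_\varepsilon$ be a product of cutoffs that vanish on $B_{r_i}(y_i)$, equal $1$ outside $B_{2r_i}(y_i)$, and satisfy $|\D\chi_\varepsilon|\le\sum_i 2r_i^{-1}\mathbf 1_{B_{2r_i}(y_i)}$. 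Apply the smooth computation in integrated form with $\tilde\eta\chi_\varepsilon$ in place of $\tilde\eta$ in the calibration part only. The resulting error terms are bounded by
\[
C\int_{t_1}^{t_2}\int_M|\D\chi_\varepsilon\circ F|\,|V|\d\mathrm{vol}\d t .
\]
To show this tends to zero, we intend to use that $|V|\d\mathrm{vol}\d t$ is precisely the $(m+1)$-Jacobian of the space-time map $(p,t)\mapsto F(p,t)$. The area formula then bounds the error by $C\sum_i r_i^{-1}\int_{B_{2r_i}(y_i)}N\d\mathcal H^{m+1}$, where $N$ is the multiplicity of the space-time map. If $N$ is bounded on $F(K)$, this is at most $C\sum_i r_i^{m}\lesssim\varepsilon$. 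Controlling this multiplicity is where we expect the real difficulty to lie. Compactness of $K$ and the immersion property should make $N$ bounded away from the set where $V$ degenerates, and on that set the Jacobian vanishes anyway. Failing this, we would try to reduce to a covering argument using the local volume of $F_t$ in balls.

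The remaining terms converge by dominated convergence as $\varepsilon\to0$. This works because $\chi_\varepsilon\to1$ off $S_\phi$, and $F_t^{-1}(S_\phi)$ is $\mathrm{vol}$-null for every $t$, since $\mathcal H^m(S_\phi)=0$. Differentiating the limiting integrated identity then gives \eqref{eq:localized_calibration_evolution} for almost every $t$.
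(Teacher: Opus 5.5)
Your smooth-case computation (Cartan's formula, dropping the tangential part of $\partial_tF$, and the first variation of $\eta\d\mathrm{vol}$) is the same as the paper's. The coflat step, however, has a genuine gap, and it is the one you flag yourself. Everything depends on showing
\[
\int_{t_1}^{t_2}\int_M \eta\,|\D\chi_\varepsilon\circ F|\,|V|\d\mathrm{vol}\d t\longrightarrow 0 .
\]
Your area-formula route needs two things for the space-time map $\Phi(p,t)=F(p,t)$: a uniform bound on its multiplicity $N$, and the density bound $\mathcal H^{m+1}(\Phi(K)\cap B_r(y))\le Cr^{m+1}$. Neither holds for general smooth proper flows. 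The heuristic that $N$ stays bounded away from $\{V=0\}$ is also false.

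A counterexample is the flow of planes $F(x,t)=(x,c(t))$ in $\R^{m+1}$, where $c(t)=e^{-1/(t-t_0)}\sin(1/(t-t_0))$ for $t>t_0$ and $c(t)=0$ otherwise. This is smooth and proper.
\begin{itemize}
\item Its multiplicity $N(y)$ grows like $\log(1/|y_{m+1}|)$, so it is unbounded in every neighbourhood of the hyperplane $\{y_{m+1}=0\}$, even though $V\neq0$ at generic preimages. The vanishing of the Jacobian on the degenerate set therefore does not help.
\item For $y$ on that hyperplane, the space-time image measure $\nu$ satisfies $r^{-1}\nu(B_{2r}(y))\approx r^m\log(1/r)$. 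This is not controlled by $\sum_i r_i^m<\varepsilon$ alone.
\item In codimension at least two, an injective spiralling $c$ breaks the density bound even with $N=1$.
\end{itemize}
Your fallback via slice-wise local volume only gives $\sum_i r_i^{-1}\mu_t(B_{2r_i}(y_i))\le C\sum_i r_i^{m-1}$, which need not be small.

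The missing point is that your reason for rejecting mollification is mistaken.
\begin{itemize}
\item $S_\phi$ is relatively closed and $\phi$ is smooth on the open set $U\setminus S_\phi$. Hence $\phi_\varepsilon=\rho_\varepsilon*\phi\to\phi$ locally smoothly on $U\setminus S_\phi$, i.e.\ at every $(p,t)$ with $F(p,t)\notin S_\phi$.
\item That set is of full spacetime measure, by Fubini and the fact that each $F_t^{-1}(S_\phi)$ is $\mathrm{vol}$-null.
\item The distributional closedness \eqref{eq:coflat_distributionally_closed} gives $\d\phi_\varepsilon=0$ on a neighbourhood $U'$ of $F(K)$ for small $\varepsilon$.
\item Averaging keeps the comass at most $1$, so $|\phi_\varepsilon(\tau)|\le1$ and $|\beta^{\phi_\varepsilon}_{V,F}|\le|V|$.
\item The time-integrated smooth identity for $\phi_\varepsilon$ then passes to the limit by dominated convergence on the compact set $K$.
\end{itemize}
This is exactly the paper's argument. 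In effect, the Harvey--Lawson cutoff near $S_\phi$ is done once against Lebesgue measure on $\R^n$. There the density bound is trivial: the error is at most $C\sum_i r_i^{-1}r_i^{n}\le C\sum_i r_i^{m}$ for $r_i\le1$, since $n-1\ge m$. Your route instead needs the cutoff against the space-time image measure of the flow, where that bound is unavailable.
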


\begin{proof}
    We first suppose that $\phi$ is a smooth closed form on $U$ (with any comass). 
    We denote the tangential speed by $\xi(p,t)\vcentcolon=(\partial_tF)^\top(p,t)=(F_t)_*X_\xi(p,t)$, where $X_\xi(\cdot,t)$ is the corresponding vector field on $M$.
    The variation of the volume form is classically given by
    \begin{align}\label{eq:time_derivative_volume_form}
        \partial_t\d\mathrm{vol}=(\Div X_\xi-\langle H,V\rangle)\d\mathrm{vol}.
    \end{align}
    Hence, after differentiating under the integral and using properness, we have
    \begin{align}\label{eq:09-04_03}
        \partial_t\int_M\eta\d\mathrm{vol}=\int_M\big(\Div X_\xi\eta-\langle H,V\rangle\eta+\langle\D\tilde\eta\circ F,\partial_tF\rangle\big)\d\mathrm{vol}.
    \end{align}
    Since $\partial_tF=\xi+V$ and
    $
        \langle\D\tilde\eta\circ F,\xi\rangle
        =g(X_\xi,\nabla\eta),
    $
    the tangential part of the last term cancels with the first term after integration by parts. Therefore,
    \begin{align}\label{eq:29-05_01}
        \partial_t\int_M\eta\d\mathrm{vol}=\int_M\big(-\langle H,V\rangle\eta+\langle\D\tilde\eta\circ F,V\rangle\big)\d\mathrm{vol}.
    \end{align}

    For the term involving $\phi$, we first observe that \eqref{eq:pullback} yields
    \begin{align}\label{eq:09-04_04}
        \int_M\phi(\tau)\eta\d\mathrm{vol}=\int_MF_t^*(\tilde\eta\wedge\phi).
    \end{align}
    Given $t\in(0,T)$, let $i_t\colon M\to M\times(0,T)$, $i_t(p)=(p,t)$. We note that $F_t=F\circ i_t$ and consequently
    \begin{align}\label{eq:0523_01}
        F_t^*(\tilde\eta\wedge\phi)=i_t^*(F^*(\tilde\eta\wedge\phi)).
    \end{align}
    Now, for a general $m$-form $\alpha$ on $M\times(0,T)$, by the flow definition of the Lie derivative, we obtain
    \begin{align}\label{eq:09-04_05}
        \partial_t(i_t^*\alpha)=i_t^*(\mathcal{L}_{\partial_t}\alpha)=i_t^*(\iota_{\partial_t}(\d\alpha))+\d(i_t^*(\iota_{\partial_t}\alpha)),
    \end{align}
    where we used Cartan's formula and the commutation of the exterior derivative with the pullback in the last step.
    We apply this to $\alpha\vcentcolon=F^*(\tilde\eta\wedge\phi)$. Since $\phi$ is closed, we compute
    \begin{align}
        \d\alpha=F^*(\d\tilde\eta\wedge\phi).
        \label{eq:09-04_01}
    \end{align}
    Take positive normal coordinates $p=(x^1,\dots, x^m)$ on $M$ and write $\tau_i\vcentcolon = \partial_i F(p,t)$ for $i=1,\dots,m$. Then we evaluate
    \begin{align}
        (i_t^* (\iota_{\partial_t} \d\alpha))_p(\partial_1,\dots,\partial_m) &= (\iota_{\partial_t}F^* (\d\tilde\eta\wedge\phi))_{i_t(p)}(\partial_1,\dots,\partial_m) \\
        &= (F^*(\d\tilde\eta\wedge\phi))_{(p,t)}(\partial_t, \partial_1,\dots,\partial_m) \\
        &= (\d\tilde\eta\wedge\phi)_{F(p,t)} (\partial_t F(p,t), \tau_1,\dots,\tau_m) \\
        & = (\d\tilde\eta\wedge\phi)_{F(p,t)} (V(p,t), \tau_1,\dots,\tau_m),\label{eq:10-04_03}
    \end{align}
    since $\xi(p,t),\tau_1,\dots,\tau_m$ are linearly dependent. Further, by definition of the wedge product,
    \begin{align}
       &(\d\tilde\eta\wedge\phi)_{F(p,t)} (V(p,t), \tau_1,\dots,\tau_m) \\
       &\qquad = (\d\tilde \eta)_{F(p,t)}(V)\,\phi_{F(p,t)}(\tau_1,\dots,\tau_m) \\
       &\qquad \qquad + \sum_{j=1}^m (-1)^{j} (\d\tilde \eta)_{F(p,t)}(\tau_j)\,\phi_{F(p,t)}(V(p,t), \tau_1,\dots, \widehat{\tau}_j,\dots,\tau_m) \\
       &\qquad = \langle \mathrm{D}\tilde\eta \circ F_t, V\rangle \phi(\tau)\vert_{(p,t)} - (F_t^*\d\tilde\eta) \wedge \beta\vert_{(p,t)} .\label{eq:10-04_04}
    \end{align}
    Again, as in \eqref{eq:09-04_03}, we may differentiate under the integral in \eqref{eq:09-04_04} and use 
    \eqref{eq:0523_01}, \eqref{eq:09-04_05}, \eqref{eq:10-04_03}, and \eqref{eq:10-04_04}
    to conclude
    \begin{align}\label{eq:09-04_02}
        \partial_t \int_M F_t^*(\tilde\eta\wedge\phi) = \int_M \langle \mathrm{D}\tilde\eta\circ F,V\rangle \phi(\tau)\d\mathrm{vol} - \int_M F_t^*(\d \tilde\eta) \wedge \beta.
    \end{align}
    Combining this with \eqref{eq:29-05_01} proves the assertion in the smooth case.

    We now consider a coflat calibration $\phi$. Fix any $0<t_1<t_2<T$.
    As $F$ is proper, the set $K:=(F|_{M\times[t_1,t_2]})^{-1}(\operatorname{supp}\tilde\eta)$ is compact.
    Hence $F(K)$ is also compact in $U$.
    Let $U'$ be a bounded open set such that $F(K)\subset U'$ and $\overline{U'}\subset U$.
    Let $\phi_\varepsilon:=\rho_\varepsilon*\phi$ be the coefficient-wise convolution of $\phi$ (extended by zero to $\R^n$) with the standard mollifier $\rho_\varepsilon$.
    By \eqref{eq:coflat_distributionally_closed} we have $\d\phi_\varepsilon=0$ on $U'$ for any small $\varepsilon>0$.
    Moreover, for any $x\in\R^n$ and $\xi\in\mathbf{Gr}_m^+(\R^n)$ we have
    \[
    (\phi_\varepsilon)_x(\xi)=\int_{\R^n} \rho_\varepsilon(y)\phi_{x-y}(\xi) \d y \leq \int_{\R^n} \rho_\varepsilon(y) \d y = 1,
    \]
    and $\phi_\varepsilon \to \phi$ as $\varepsilon\to0^+$ locally smoothly on $U\setminus S_\phi$.
    As the above computation in the smooth case only uses the form on a neighborhood of $F(K)$, we can apply the identity \eqref{eq:localized_calibration_evolution} to the smooth form $\phi_\varepsilon$ on $U'$ and integrate in time over $[t_1,t_2]$. 
    Since $\mu_t(S_\phi)=0$ for every $t$, Fubini's theorem implies that $F^{-1}(S_\phi)$ has zero spacetime measure. 
    All terms are uniformly bounded on the compact spacetime set $K$ (in particular, $|\phi_\varepsilon(\tau)|\leq1$ and $|\beta^{\phi_\varepsilon}_{V,F}|\leq|V|$), and thus the dominated convergence theorem allows us to take the limit $\varepsilon\to0^+$. 
    This gives the time-integrated form of \eqref{eq:localized_calibration_evolution} on any subinterval $[t_1,t_2]\subset(0,T)$, yielding local absolute continuity and the almost-everywhere differential identity.
\end{proof}

We then obtain a key geometric estimate for the last term of the localized evolution law.

\begin{lemma}\label{lem:loc_calib_remainder_estimate}
    Let $U\subset\R^n$ be an open set, let $\phi$ be a coflat calibration of degree $m$ on $U$, and let $F\colon M\to\R^n$ be an oriented immersion with $F(M)\subset U$. Suppose that $V\colon M\to\R^n$ is normal along $F$. Then, for any $\tilde\eta\in C^\infty(\R^n)$, we have the pointwise estimate
    \begin{align}
        \pm F^*(\d\tilde\eta)\wedge\beta
        \leq|\D\tilde\eta\circ F||V|\sqrt{1-\phi(\tau)^2}\d\mathrm{vol},
    \end{align}
    where $\beta\vcentcolon=\beta^\phi_{V,F}$ as defined in \eqref{eq:def_beta}.
\end{lemma}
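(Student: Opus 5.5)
Since the inequality is pointwise and both sides are $m$-forms on $M$, I will evaluate at a point $p\in M\setminus F^{-1}(S_\phi)$; points of $F^{-1}(S_\phi)$ are negligible, since $\beta$ is only defined a.e. At $p$ I take a positive orthonormal frame $e_1,\dots,e_m$ of $T_pM$ and set $\tau_i:=\d F_p(e_i)$, so that $\tau=\tau_1\wedge\dots\wedge\tau_m$. Write $w:=\D\tilde\eta(F(p))$ and split $w=w^\top+w^\perp$ along $F$. The first step is the same wedge-product expansion as in the proof of \Cref{lem:loc_calib_evolution}:
\[
(F^*\d\tilde\eta\wedge\beta)(e_1,\dots,e_m)=\sum_{j=1}^m(-1)^{j-1}\langle w,\tau_j\rangle\,\phi(V,\tau_1,\dots,\widehat{\tau}_j,\dots,\tau_m).
\]
Since $w^\top=\sum_j\langle w,\tau_j\rangle\tau_j$, multilinearity and alternation give
\[
(F^*\d\tilde\eta\wedge\beta)(e_1,\dots,e_m)=\pm\,\phi\bigl(V\wedge\iota_{w^\top}\tau\bigr),
\]
where $\iota_{w^\top}\tau$ denotes the interior product of the $m$-vector $\tau$ with the tangent vector $w^\top$. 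Equivalently, after rotating the tangent frame so that $\tau_1=w^\top/|w^\top|$ (when $w^\top\neq0$), the quantity equals $|w^\top|\,\phi(V,\tau_2,\dots,\tau_m)$ up to sign. If $V=0$ or $w^\top=0$ there is nothing to prove. Otherwise I normalize $\nu:=V/|V|$, which is a unit vector orthogonal to $\tau_1,\dots,\tau_m$. The claim then reduces to
\[
|\phi(\nu,\tau_2,\dots,\tau_m)|\le\sqrt{1-\phi(\tau_1,\dots,\tau_m)^2},
\]
combined with $|w^\top|\le|w|$.

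The key step is this comass estimate, and I would prove it by rotating in the plane spanned by $\tau_1$ and $\nu$. For $\theta\in\R$ set $\xi_\theta:=(\cos\theta\,\tau_1+\sin\theta\,\nu)\wedge\tau_2\wedge\dots\wedge\tau_m$. Since $\cos\theta\,\tau_1+\sin\theta\,\nu,\tau_2,\dots,\tau_m$ is orthonormal, $\xi_\theta$ is a simple unit $m$-vector, that is, $\xi_\theta\in\mathbf{Gr}_m^+(\R^n)$. Comass at most $1$ at $F(p)$ therefore gives
\[
\cos\theta\,a+\sin\theta\,b\le1\quad\text{for all }\theta,
\]
where $a:=\phi(\tau)$ and $b:=\phi(\nu,\tau_2,\dots,\tau_m)$. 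Choosing $\theta$ to maximize the left-hand side yields $\sqrt{a^2+b^2}\le1$, that is, $|b|\le\sqrt{1-a^2}$.

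Multiplying through by $|V||w^\top|\le|V||\D\tilde\eta\circ F|$ then gives both signs of the inequality, since the estimate bounds the absolute value. The one step needing care is the sign and orientation bookkeeping in the reduction to $\phi(V\wedge\iota_{w^\top}\tau)$. The frame rotation sidesteps most of it, because only absolute values matter in the end. Normality of $V$ is used exactly once: it guarantees that the rotated frame stays orthonormal, so that $\xi_\theta$ is a unit simple $m$-vector.
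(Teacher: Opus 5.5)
Your proof is correct, and it differs from the paper's only in how the comass bound is used.

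The paper works in an arbitrary positive orthonormal frame. It introduces $N_j:=\nu\wedge\partial_1F\wedge\dots\wedge\widehat{\partial_jF}\wedge\dots\wedge\partial_mF$ for $j\le m$ and $N_{m+1}:=\tau$, and notes that these are orthonormal in $\Lambda^mP$ for the $(m+1)$-dimensional space $P=\mathrm{span}(\nu,\partial_1F,\dots,\partial_mF)$. Since every $m$-vector in $\Lambda^mP$ is simple (Hodge duality), it can test $\phi$ on $\sum_{j=1}^{m+1}\phi(N_j)N_j$. This gives $\sum_{j=1}^m\phi(N_j)^2\le1-\phi(\tau)^2$, and Cauchy--Schwarz against $\sum_j\langle\D\tilde\eta\circ F,\partial_jF\rangle^2\le|\D\tilde\eta\circ F|^2$ finishes.

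You instead adapt the frame to $w^\top$, so the sum collapses to the single term $\pm|V||w^\top|\phi(N_1)$. You then only need $\phi$ on the manifestly simple unit $m$-vectors $\cos\theta\,\tau+\sin\theta\,N_1=(\cos\theta\,\tau_1+\sin\theta\,\nu)\wedge\tau_2\wedge\dots\wedge\tau_m$. This is slightly more elementary, since it needs neither the simplicity fact nor Cauchy--Schwarz. Your observation that the bound involves only $\phi(\tau)^2$ correctly disposes of the orientation of the adapted frame, including the case $m=1$.

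The paper's route directly gives the $\tilde\eta$-independent bound $\sum_{j=1}^m\phi(N_j)^2\le1-\phi(\tau)^2$ on the tangential restriction of $\beta$. But $u\mapsto\phi(\nu\wedge\iota_u\tau)$ is linear in the unit tangent vector $u$, so running your rotation argument for every such $u$ recovers exactly this bound. The two arguments are therefore equivalent in content.
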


\begin{proof}
    Note first that our zero-extension convention for $\phi$ makes the assertion trivial on $S_\phi$ (but the argument below also works there).
    
    Take positive normal coordinates $(x^1,\dots,x^m)$. If $V=0$ there is nothing to prove, so we assume $V\neq 0$ and set $\nu\vcentcolon = \frac{V}{|V|}$. Evaluating at $(\partial_1,\dots,\partial_m)$, we find
    \begin{align}
        &(F^*(\d\tilde\eta)\wedge \beta) (\partial_1,\dots,\partial_m)  =  |V| \sum_{j=1}^m(-1)^{j-1}\d\tilde\eta(\partial_j F) \phi(N_j)\label{eq:09-04_07}
    \end{align}
    where $N_j\vcentcolon = \nu\wedge \partial_1 F\wedge\dots\wedge \widehat{\partial_jF}\wedge \dots\wedge \partial_mF$. 
    Since, up to a sign, $N_j$ is given by replacing the unit vector $\partial_j F$ in $\tau=\partial_1 F\wedge\dots\wedge\partial_mF$ with the other unit vector $\nu$ such that $\langle \nu,\partial_jF\rangle=0$ for all $j$, we deduce that $N_1,\dots,N_m$ and $N_{m+1}\vcentcolon=\tau$ are mutually orthogonal unit $m$-vectors, i.e., $\langle N_i,N_j \rangle=\delta_{ij}$. We consider the $(m+1)$-dimensional subspace $P$ spanned by $\nu,\partial_1F,\dots,\partial_mF$. Since any $m$-vector in $\Lambda^m P$ is simple (by Hodge duality), %(since the Hodge star operator $\star\colon \Lambda^m(P)\to P$ is an isomorphism), 
    so is $\sum_{j=1}^{m+1}\phi(N_j) N_j$. Hence, we have the estimate
    \begin{align}
         \sum_{j=1}^{m+1} \phi(N_j)^2 = \phi\Big(\sum_{j=1}^{m+1} \phi(N_j) N_j\Big) \leq \|\phi\|_U^* \Big(\sum_{j=1}^{m+1} \phi(N_j)^2\Big)^{\frac{1}{2}}.
    \end{align}
    We conclude, using $\|\phi\|_U^*\leq1$, that $\sum_{j=1}^{m+1} \phi(N_j)^2\leq1$, i.e.,
    \begin{align}
        \sum_{j=1}^m \phi(N_j)^2 \leq 1-\phi(\tau)^2.\label{eq:09-04_08}
    \end{align}
    Similarly, we have $\sum_{j=1}^m \langle \mathrm{D}\tilde\eta\circ F, \partial_jF\rangle^2 \leq |\mathrm{D}\tilde\eta\circ F|^2.$
    Now, estimating \eqref{eq:09-04_07} by using $\d\tilde\eta(\partial_jF)=\langle \mathrm{D}\tilde\eta\circ F, \partial_jF\rangle$, the Cauchy--Schwarz inequality, and \eqref{eq:09-04_08}, we conclude
    \begin{align}
         |(F^*(\d\tilde\eta)\wedge \beta)(\partial_1,\dots,\partial_m)|^2 &\leq |V|^2 \Big(\sum_{j=1}^m \langle \mathrm{D}\tilde\eta\circ F, \partial_jF\rangle^2\Big) \Big(\sum_{j=1}^m \phi(N_j)^2\Big) \\
         &\leq |\mathrm{D}\tilde\eta\circ F|^2|V|^2 \big(1-\phi(\tau)^2\big).
    \end{align}
    The statement follows.
\end{proof}

Now we prove the energy identity in its general form.

\begin{proof}[Proof of \Cref{thm:main_calibration_detailed}]
   Let $T'<T$. For notational simplicity, for $t\in [0,T']$ we set
    \[
        % \nu_t\vcentcolon=(1-\phi(\tau))\mu_t \quad(t>0),
        % \qquad
        % \nu_0\vcentcolon=\mu_0-\phi(\vec{T}_0)\|\T_0\|.
        \nu_t\vcentcolon=\mu_t-\phi(\vec{T}_t)\|\T_t\|.
    \]
    Observe that \eqref{eq:local_calibration_energy_bound} and an elementary covering argument imply that
    \begin{align}
        \sup_{0<t\leq T'}\sup_{x_0\in\R^n}\nu_t(B_R(x_0))<\infty \qquad \text{for every $R>0$}.
        \label{eq:R-unif-bounded-local-volume}
    \end{align}
    
    We first show that $\nu_t\rightharpoonup\nu_0$ as Radon measures as $t\to0^+$.
    To this end it suffices to show that, for every $0\leq\zeta\in C_c^\infty(\R^n)$,
    \begin{equation}\label{eq:initial_trace_local_defect_convergence}
        \lim_{t\to0^+}\int_{\R^n}\zeta\d\nu_t = \int_{\R^n}\zeta\d\nu_0.
    \end{equation}
    As $\mu_0$ is concentrated on $U$, for any given $\varepsilon>0$, we can take a bounded open
    neighborhood $O$ of $(\R^n\setminus U)\cap\operatorname{supp}\zeta$ such that $\mu_0(O)<\varepsilon$ and $\mu_0(\partial O)=0$, and also take $\rho\in C_c^\infty(U;[0,1])$ such that $\rho=1$ on $\operatorname{supp}\zeta\setminus O$.
   By \Cref{def:initial_trace} and the discussion thereafter, we have uniform mass bounds for the family $\T_t$ as $t\to 0^+$. Hence, by \eqref{eq:initial_trace} and \cite[Theorem 31.2]{MR756417}, the locally flat distance between $\T_{t}$ and $\T_0$ goes to zero. Let $W\subset\subset U$ be a bounded open set with $\operatorname{supp}\rho \subset W$. It follows that there exist $m$-currents $\mathbf A_t$ and $(m+1)$-currents $\mathbf{B}_t$ such that $\T_t-\T_0 = \mathbf{A}_t+\partial \mathbf{B}_t$ and, as $t\to 0^+$,
    \begin{align}\label{eq:flat_distance}
        \| \mathbf A_t\|_{W} + \| \mathbf B_t\|_{W} \to 0,
    \end{align}
    where $\|\cdot\|_W$ denotes the mass of currents in a bounded open set $W$. Let $\phi_\delta$ be the coefficient-wise convolution of $\phi$ as in the proof of \Cref{lem:loc_calib_evolution}.
    It follows that, 
    \begin{align}\label{eq:31_08_26}
        \big|(\T_t -\T_0)(\rho\zeta\phi_\delta)\big| &= \big|\mathbf{A}_t(\rho\zeta\phi_\delta)+\mathbf{B}_t(\rho\d\zeta\wedge \phi_\delta) + \mathbf{B}_t(\zeta \d\rho\wedge \phi_\delta)\big| \\
        &\leq 3 \Vert \rho\Vert_{C^1}\Vert \zeta\Vert_{C^1}(\| \mathbf A_t\|_{W} + \| \mathbf B_t\|_{W}).
    \end{align}
    Since $\T_t$ is locally integral and $\mathcal{H}^m(S_\phi)=0$, we have $\|\T_t\|(S_\phi)=0$ for all $t\geq 0$. Together with the fact that $\phi_\delta\to\phi$ as $\delta\to 0^+$ locally smoothly on $U\setminus S_\phi$, the dominated convergence theorem implies $\T_t(\rho\zeta\phi_\delta)\to \T_t (\rho\zeta\phi)$ for all $t\geq 0$. Sending first $\delta\to 0^+$ in \eqref{eq:31_08_26} and then $t\to0^+$, \eqref{eq:flat_distance} implies $\T_t(\rho\zeta\phi)\to \T_0(\rho\zeta\phi)$ as $t\to 0^+$.
    Consequently, using \eqref{eq:initial_trace}, we find
    \begin{align}
        \lim_{t\to0^+}\int_{\R^n}\rho\zeta\d\nu_t &= \lim_{t\to0^+}\left(\int_{\R^n}\rho\zeta \d\mu_t - \T_t(\rho\zeta\phi)\right) \\
        &= \int_{\R^n}\rho\zeta \d\mu_0 - \T_0(\rho\zeta\phi) = \int_{\R^n}\rho\zeta\d\nu_0.
    \end{align}
    Since $\nu_t\leq 2\mu_t$ for $t\geq0$, the remaining terms are bounded by
    \begin{align}
        \limsup_{t\to0^+}\left| \int_{\R^n}(1-\rho)\zeta\d\nu_t - \int_{\R^n}(1-\rho)\zeta\d\nu_0\right| \leq \limsup_{t\to0^+}2\|\zeta\|_\infty\bigl(\mu_t(O)+\mu_0(O)\bigr),
    \end{align}
    which is at most $4\|\zeta\|_\infty\varepsilon$.
    Letting $\varepsilon\to0^+$ proves \eqref{eq:initial_trace_local_defect_convergence}.

    Now we turn to the proof of the main energy identity.
    Fix $x_0\in \R^n$ and $R>0$, and take $\tilde\gamma \in C_c^\infty(\R^n)$ such that $\chi_{B_R}(x_0)\leq \tilde
    \gamma\leq \chi_{B_{2R}(x_0)}$ and $\|\D\tilde\gamma\|_\infty\leq C/R$ for some $C>0$, and define $\gamma\vcentcolon =\tilde\gamma\circ F$.
    Hereafter $C=C(m,n)>0$ may change line by line. Moreover, we let $\tilde\eta = \tilde\gamma^2$ and $\eta =\tilde\eta\circ F = \gamma^2$. Integrating the time-derivative formula in \Cref{lem:loc_calib_evolution} with $V=H$ on $[s,t]\subset(0,T']$, we obtain
    \begin{align}
        &\int_M(1-\phi(\tau))\eta\d\mathrm{vol}\Big|_t
        +\int_s^t\int_M|H|^2\eta\d\mathrm{vol}\d r\\
        &=\int_M(1-\phi(\tau))\eta\d\mathrm{vol}\Big|_s
        +\int_s^t\int_M(1-\phi(\tau))
        \langle\D\tilde\eta\circ F_r,H\rangle\d\mathrm{vol}\d r\\
        &\quad+\int_s^t\int_MF_r^*(\d\tilde\eta)\wedge\beta\d r,
        \label{eq:localized_initial_trace_identity}
    \end{align}
    where $\beta=\beta^\phi_{H(\cdot,r),F_r}$. By
    \Cref{lem:loc_calib_remainder_estimate}, the estimate
    $|\D\tilde\eta|\leq C\tilde\gamma/R$, and
    \[
        1-\phi(\tau)+\sqrt{1-\phi(\tau)^2}
        \leq C\sqrt{1-\phi(\tau)},
    \]
    the modulus of the last two terms in
    \eqref{eq:localized_initial_trace_identity} is bounded by
    \begin{align}
        &\frac{C}{R}\int_s^t\int_M
        \sqrt{1-\phi(\tau)}|H|\gamma\d\mathrm{vol}\d r \label{eq:localized_initial_trace_error_0}\\
        &\qquad\leq\frac{C}{R^2}\int_s^t
        \nu_r(B_{2R}(x_0))\d r
        +\frac12\int_s^t\int_M|H|^2\gamma^2\d\mathrm{vol}\d r. \label{eq:localized_initial_trace_error}
    \end{align}
    Letting $s\to0^+$ in the resulting inequality, using $\nu_s\rightharpoonup\nu_0$ for the first term of the right hand side and monotone convergence for the spacetime terms, we obtain
    \begin{align}
        &\nu_t(B_R(x_0))
        +\frac12\int_0^t\int_{B_R(x_0)}|H|^2\d\mu_r\d r\\
        &\qquad\leq \E_\phi[\mu_0,\T_0]
        +\frac{C}{R^2}\int_0^t\nu_r(B_{2R}(x_0))\d r.
        \label{eq:local_initial_trace_energy_estimate}
    \end{align}

    For $t\in[0,T']$ and $R>0$, we define the functions
    \begin{align}
        \E(t,R)&\vcentcolon=
        \sup_{x_0\in\R^n}\nu_t(B_R(x_0)),\label{eq:def_local_calibration_energy}\\
        h(t,R)&\vcentcolon=
        \sup_{x_0\in\R^n}\int_0^t\int_{B_R(x_0)}|H|^2\d\mu_r\d r.
        \label{eq:def_local_curvature_dissipation}
    \end{align}
    By \eqref{eq:R-unif-bounded-local-volume} and \eqref{eq:local_initial_trace_energy_estimate}, together with $\E(0,R)\leq \E_\phi[\mu_0,\T_0]<\infty$ and $h(0,R)=0$, both $\E(\cdot,R)$ and $h(\cdot,R)$ are bounded on $[0,T']$ for any fixed $R>0$.
    Taking the supremum in \eqref{eq:local_initial_trace_energy_estimate} separately for the two terms on the left gives, for $t\in[0,T']$,
    \begin{align}
        \E(t,R)+\frac12h(t,R)
        \leq2\E_\phi[\mu_0,\T_0]+\frac{C}{R^2}\int_0^t\E(r,2R)\d r.
        \label{eq:covering_calibration_inequality}
    \end{align}
    Since any ball of radius $2R$ in $\R^n$ can be covered by $\Gamma=\Gamma(n)$ balls of radius $R$, we have
    \begin{align}\label{eq:covering}
        \E(r,2R)\leq\Gamma\E(r,R).
    \end{align}
    Dropping the nonnegative $h$-term in \eqref{eq:covering_calibration_inequality}, using \eqref{eq:covering}, and applying a version of Gr\"onwall's inequality (see, e.g., \cite[Appendix B.2.k]{MR2597943}), we find
    \begin{equation}
        \E(t,R)
        \leq2\E_\phi[\mu_0,\T_0]\exp\left(\frac{C\Gamma t}{R^2}\right).
        \label{eq:covering_calibration_Gronwall}
    \end{equation}
    Substituting this estimate into
    \eqref{eq:covering_calibration_inequality} also yields
    \begin{equation}
        h(t,R)\leq4\E_\phi[\mu_0,\T_0]\exp\left(\frac{C\Gamma t}{R^2}\right).
        \label{eq:covering_curvature_bound}
    \end{equation}
    Hence, thanks to the monotone convergence theorem, sending $R\to\infty$ gives
    \begin{align}
        \E_\phi[F_t]&\leq2\E_\phi[\mu_0,\T_0],\label{eq:global_calibration_energy_bound}\\
        \int_0^t\int_M|H|^2\d\mathrm{vol}\d r
        &\leq4\E_\phi[\mu_0,\T_0].
        \label{eq:global_curvature_dissipation_bound}
    \end{align}
    In particular, both terms are bounded on $[0,T']$.

    Now, letting $s\to0^+$ in \eqref{eq:localized_initial_trace_identity} with $\nu_s\rightharpoonup\nu_0$, revisiting \eqref{eq:localized_initial_trace_error_0}, and using the spacetime integrability of $|H|\sqrt{1-\phi(\tau)}$ by \eqref{eq:global_calibration_energy_bound} and \eqref{eq:global_curvature_dissipation_bound}, we conclude that, as $R\to\infty$, the second and third term on the right hand side of \eqref{eq:localized_initial_trace_identity} vanish.
    The remaining terms converge as desired thanks to the property $\eta\to 1$ as $R\to\infty$ and the dominated convergence theorem, yielding
    \begin{equation}
        \E_\phi[F_t]+\int_0^t\int_M|H|^2\d\mathrm{vol}\d r
        =\E_\phi[\mu_0,\T_0].
    \end{equation}
    Subtracting this at two positive times gives \eqref{eq:initial_trace_energy_identity} for arbitrary $0\leq t_1\leq t_2<T'$. 
    By integrability we have $\int_0^t\int_M|H|^2\d\mathrm{vol}\d r\to0$ as $t\to0^+$, proving \eqref{eq:initial_trace_energy_convergence}.
\end{proof}

In order to prove \Cref{thm:main_calibration}, we next observe that uniformly finite local volume propagates from a measure trace.
The argument is essentially the same as in the embedded codimension-one case in \cite[Proposition~4.9]{Ecker}; see also \cite[Theorem~3.7]{Brakke}.

\begin{lemma}\label{lem:uniform_bounded_local_volume_new}
    Let $F\colon M^m\times(0,T]\to\R^n$ be a proper mean curvature flow, and suppose that $\mu_{t_j}\rightharpoonup\mu_0$ as Radon measures as $j\to\infty$ for some sequence $t_j\to 0^+$.
    Then
    \begin{align}
        \sup_{0<t\leq T}\sup_{x_0\in \R^n}\mu_t(B_1(x_0))
        \leq C\sup_{x_0\in\R^n}\mu_0(B_1(x_0))
    \end{align}
    for a constant $C=C(m,n,T)\in(0,\infty)$.
\end{lemma}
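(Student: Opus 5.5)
The plan is to use a localized volume evolution with a cutoff that shrinks in time, in the spirit of Ecker's Proposition~4.9. Fix $x_0\in\R^n$. I will use the spherical cutoff
\[
\tilde\psi(x,t)\vcentcolon=\bigl(1-|x-x_0|^2-2mt\bigr)_+^3 ,
\]
or a smooth compactly supported variant of it. This function is a subsolution along mean curvature flow: for $\rho\vcentcolon=1-|x-x_0|^2-2mt$ one has $(\tfrac{d}{dt}-\Delta_{M_t})\rho=0$, since $\Delta_{M_t}|x-x_0|^2=2m+2\langle H,x-x_0\rangle$ while $\tfrac{d}{dt}|x-x_0|^2=2\langle \partial_tF,x-x_0\rangle$. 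For $\rho^3$ this gives
\[
\Bigl(\tfrac{d}{dt}-\Delta\Bigr)\rho^3=-6\rho|\nabla\rho|^2\le0 .
\]

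First I would compute $\tfrac{d}{dt}\int_M\psi\,\d\mathrm{vol}$ for $\psi=\tilde\psi\circ F$. Using \eqref{eq:29-05_01} with $V=H$, and its analogue for time-dependent cutoffs, this derivative equals
\[
\int_M\Bigl(\partial_t\tilde\psi+\langle\D\tilde\psi,H\rangle-|H|^2\psi\Bigr)\d\mathrm{vol}.
\]
The tangential velocity does not enter, because the integral is geometric. Next I use the identity $\Delta_M\psi=\operatorname{tr}_M\D^2\tilde\psi+\langle\D\tilde\psi,H\rangle$ together with $\int\Delta_M\psi=0$; properness makes $\psi$ compactly supported on $M$. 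This yields
\[
\frac{d}{dt}\int_M\psi\le\int_M\Bigl(\partial_t\tilde\psi-\operatorname{tr}_M\D^2\tilde\psi\Bigr)-\int_M\Bigl|H-\tfrac{(\D\tilde\psi)^\perp}{\psi}\Bigr|^2\psi+\int_M\frac{|(\D\tilde\psi)^\perp|^2}{\psi}.
\]
For the cubed cutoff, $|\D\tilde\psi|^2/\tilde\psi\le C\rho^{2}\cdot\rho^{-0}$ stays bounded, and the first term is nonpositive by the computation above. The net effect is that $\int\psi\,\d\mu_t$ is nonincreasing for $t\in(0,T]$, with $t\le 1/(4m)$ so that the support is nonempty. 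I will redo this carefully, since cross terms $\langle\D\tilde\psi,H\rangle$ require the completion of the square.

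Then I pass to the initial trace. Along $t_j\to0^+$, the monotonicity gives
\[
\int\tilde\psi(\cdot,t)\,\d\mu_t\le\int\tilde\psi(\cdot,t_j)\,\d\mu_{t_j}.
\]
Since $\tilde\psi(\cdot,t_j)\to\tilde\psi(\cdot,0)$ uniformly with support in $\overline{B_1(x_0)}$, weak convergence yields the bound $\limsup\le\mu_0(\overline{B_1(x_0)})$, which is at most $\sup_y\mu_0(B_2(y))\le C(n)\sup_y\mu_0(B_1(y))$ by covering. Because $\tilde\psi(\cdot,t)\ge c>0$ on $B_{1/2}(x_0)$ for $t\le 1/(16m)$, I get
\[
\mu_t(B_{1/2}(x_0))\le C\sup_y\mu_0(B_1(y)),
\]
and covering $B_1$ by finitely many half-balls gives the bound for $t\le t_*(m)$.

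For larger $t\le T$ I iterate. Applying the same argument with initial time $t_*$, then $2t_*$, and so on, and using the already established bound at the previous step in place of $\mu_0$, each step costs a factor $C(n)$. After about $T/t_*$ steps this gives $C=C(m,n,T)$. The main obstacle I expect is the first step: justifying the monotonicity rigorously, namely the sign bookkeeping in the completed square and the smoothness of $\rho_+^3$. If necessary I would replace it by a smooth $C^2$ function of $\rho$ and accept a harmless constant.
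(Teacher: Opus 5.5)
Your strategy is the same as the paper's: Ecker's shrinking cutoff $\rho_+^3$ with $\rho=1-|x-x_0|^2-2mt$, the Brakke identity, the limit along $t_j$, covering by half-balls, and iteration over time steps of length $\sim 1/m$. The limit, covering and iteration steps are fine. The monotonicity computation, which is the heart of the lemma, is wrong as written.

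Once you use $\Delta_M\psi=\operatorname{tr}_M\D^2\tilde\psi+\langle\D\tilde\psi,H\rangle$ and $\int_M\Delta_M\psi\,\d\mathrm{vol}=0$, the cross term is removed \emph{exactly}:
\[
\frac{d}{dt}\int_M\psi\,\d\mathrm{vol}=\int_M\bigl(\partial_t\tilde\psi-\operatorname{tr}_M\D^2\tilde\psi\bigr)\d\mathrm{vol}-\int_M|H|^2\psi\,\d\mathrm{vol}\le-6\int_M\rho_+|\nabla\rho|^2\,\d\mathrm{vol}\le0,
\]
since $\partial_t\rho-\operatorname{tr}_M\D^2\rho=-2m+2m=0$. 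This is exactly \eqref{eq:27-03_01}--\eqref{eq:27-03_02} in the paper, and there is no square left to complete. Your displayed inequality instead counts the cross term twice. Its right-hand side expands to the true derivative plus $2\int_M\langle H,\D\tilde\psi\rangle\,\d\mathrm{vol}$, which has no sign, so the display is not a valid inequality. Even taken at face value, its positive remainder $\int_M|(\D\tilde\psi)^\perp|^2/\psi$ does not give monotonicity. Also, $|\D\tilde\psi|^2/\tilde\psi=36\rho_+|x-x_0|^2$ is of order $\rho$, not $\rho^2$.

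Do not carry out your announced plan of handling $\langle\D\tilde\psi,H\rangle$ by completing the square instead of integrating by parts. That route leaves
\[
\partial_t\tilde\psi+\frac{|(\D\tilde\psi)^\perp|^2}{4\psi}=-6m\rho^2+9\rho|(x-x_0)^\perp|^2.
\]
This is positive near the edge of the support wherever $(x-x_0)^\perp\neq0$, so you only obtain $\frac{d}{dt}\int\psi\le C\mu_t(B_1(x_0))$. Such an estimate cannot be closed by Gr\"onwall and covering unless you already know $\sup_{x_0}\mu_t(B_1(x_0))<\infty$ for $t>0$. In \Cref{thm:main_calibration_detailed} that a priori local bound is an assumption; here it is exactly what the lemma is supposed to produce.

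So the exact cancellation from the first variation formula is the essential point, not a convenience. With it in place, the rest of your argument goes through as written.
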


\begin{proof}
    Arguing as in \eqref{eq:09-04_03} with $\tilde\eta$ replaced by $\zeta\in C_c^\infty(\R^n\times(0,T])$ and with $\eta(p,t)=\zeta(F(p,t),t)$, we obtain the Brakke formulation of mean curvature flow,
    \begin{align}
        \partial_t\int\zeta\d\mu_t
        =\int\big(\partial_t\zeta+\langle\D\zeta,H\rangle-|H|^2\zeta\big)\d\mu_t.
    \end{align}
    Integrating by parts, we find
    \begin{align}\label{eq:27-03_01}
        \partial_t\int\zeta\d\mu_t
        =\int\big(\partial_t\zeta-\operatorname{div}_\top(\D\zeta)-|H|^2\zeta\big)\d\mu_t,
    \end{align}
    where $\operatorname{div}_\top$ denotes the tangential divergence with respect to $\mu_t$.
    This remains valid for compactly supported test functions $\zeta$ with continuous partial derivatives of order one in $t$ and order two in $x$.

    For $0<s<t\leq T$ and $x_0\in\R^n$ we now consider
    \[
        \zeta(x,r)\vcentcolon=\Big(1-(|x-x_0|^2+2m(r-s))\Big)_+^3,
        \qquad s\leq r\leq t.
    \]
    As in \cite[(3.3) and (4.4)]{Ecker}, we have
    \begin{align}\label{eq:27-03_02}
        \partial_r\zeta-\operatorname{div}_\top(\D\zeta)\leq0.
    \end{align}
    If $|x-x_0|^2\leq\frac14$ and $2m(t-s)\leq\frac14$, then $\zeta(x,t)\geq\frac18$, whereas $\zeta(x,s)=0$ for $|x-x_0|>1$. 
    In combination with \eqref{eq:27-03_01} and \eqref{eq:27-03_02}, it follows that 
    \[
        \frac18\mu_t(B_{\frac12}(x_0))\leq\int\zeta(\cdot,t)\d\mu_t
        \leq\int\zeta(\cdot,s)\d\mu_s.
    \]
    For $0<t\leq\frac{1}{8m}$, we take $s=t_j\to0^+$ and use the convergence $\mu_{t_j}\rightharpoonup\mu_0$ to obtain
    \begin{align}\label{eq:01_09_26}
        \frac18\mu_t(B_{\frac12}(x_0))\leq\mu_0(B_1(x_0)).
    \end{align}
    Covering a unit ball by at most $\Gamma(n)$ balls of radius $\frac12$, we obtain
    \[
        \sup_{0<t\leq \min\{\frac{1}{8m},T\}}\sup_{x_0\in \R^n}\mu_t(B_1(x_0))\leq8\Gamma(n)\sup_{x_0\in\R^n}\mu_0(B_1(x_0)).
    \]
    Iterating the same estimate on successive time intervals of length $\frac1{8m}$ yields the assertion.
\end{proof}

This immediately yields the following corollary, which requires crucial assumptions only on the initial trace.

\begin{corollary}\label{cor:main_calibration_detailed}
    Let $F\colon M^m\times(0,T)\to\R^n$ be a proper oriented mean curvature flow.
    Let $U\subset\R^n$ be an open set such that $F_t(M)\subset U$ for all $t\in(0,T)$, and let $\phi$ be a coflat calibration of degree $m$ on $U$.
    Suppose that $F$ has an oriented initial trace $(\mu_0,\T_0)$ with $\mu_0$ concentrated on $U$ satisfying
    \begin{equation}\label{eq:initial_trace_local_volume}
        \sup_{x_0\in\R^n}\mu_0(B_1(x_0)) < \infty.
    \end{equation}
    If $\E_\phi[\mu_0,\T_0]<\infty$, then energy identity \eqref{eq:initial_trace_energy_identity} holds for all $0\leq t_1\leq t_2<T$ with $\E_\phi[F_0]\vcentcolon=\E_\phi[\mu_0,\T_0]$, and also convergence \eqref{eq:initial_trace_energy_convergence} holds.
\end{corollary}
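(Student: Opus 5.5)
The plan is to reduce \Cref{cor:main_calibration_detailed} to \Cref{thm:main_calibration_detailed}. Every hypothesis of that theorem is already assumed here except the local calibration energy bound \eqref{eq:local_calibration_energy_bound}, so the work is to derive that bound from \eqref{eq:initial_trace_local_volume} using \Cref{lem:uniform_bounded_local_volume_new}.

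\textbf{Step 1 (uniform local volume bound).} Fix $T'<T$ and consider the restricted flow $F|_{M\times(0,T']}$, which is a proper mean curvature flow on $(0,T']$. Since $(\mu_0,\T_0)$ is an oriented initial trace, $\mu_t\rightharpoonup\mu_0$ as $t\to0^+$, and in particular along any sequence $t_j\to0^+$. We then apply \Cref{lem:uniform_bounded_local_volume_new} with $T$ replaced by $T'$, which yields
\[
\sup_{0<t\leq T'}\sup_{x_0\in\R^n}\mu_t(B_1(x_0))\leq C(m,n,T')\sup_{x_0\in\R^n}\mu_0(B_1(x_0))<\infty .
\]

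\textbf{Step 2 (from volume to calibration energy).} Because $\phi$ has comass at most $1$ on $U\setminus S_\phi$ and vanishes on $S_\phi$, we have $|\phi(\tau)|\leq1$, and hence $0\leq1-\phi(\tau)\leq2$ pointwise. Here $\mu_t(S_\phi)=0$, so the values on $S_\phi$ play no role anyway. It follows that
\[
\int_{B_1(x_0)}(1-\phi(\tau))\d\mu_t\leq2\mu_t(B_1(x_0)),
\]
and combining this with Step 1 gives \eqref{eq:local_calibration_energy_bound} for every $T'<T$.

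\textbf{Step 3 (conclusion).} With \eqref{eq:local_calibration_energy_bound} established, and with \eqref{eq:inclusion}, the concentration of $\mu_0$ on $U$, and $\E_\phi[\mu_0,\T_0]<\infty$ all assumed, \Cref{thm:main_calibration_detailed} applies directly. It gives the energy identity \eqref{eq:initial_trace_energy_identity} and the convergence \eqref{eq:initial_trace_energy_convergence}.

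No step is a genuine obstacle. The only points needing care are two small checks. First, \Cref{lem:uniform_bounded_local_volume_new} is stated on a closed interval $(0,T]$, so it has to be applied on $(0,T']$ for each $T'<T$. Second, the lemma does not use orientation or any calibration structure, so it applies to our flow as stated.
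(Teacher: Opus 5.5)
Your proposal is correct and follows the paper's own proof: you use \Cref{lem:uniform_bounded_local_volume_new} on each interval $(0,T']$ to get a uniform local volume bound, then use $1-\phi(\tau)\leq2$ to obtain \eqref{eq:local_calibration_energy_bound}, and \Cref{thm:main_calibration_detailed} then gives the conclusion. Your two extra checks, applying the lemma on $(0,T']$ for each $T'<T$ and noting that it needs neither orientation nor the calibration, are exactly the routine points the paper leaves implicit.
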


\begin{proof}
    By \Cref{lem:uniform_bounded_local_volume_new}, condition \eqref{eq:initial_trace_local_volume} yields a uniform local-volume bound on every finite time interval. Since $1-\phi(\tau)\leq2$, condition \eqref{eq:local_calibration_energy_bound} follows. We may therefore apply \Cref{thm:main_calibration_detailed}.
\end{proof}

\Cref{thm:main_calibration} is now an immediate consequence.

\begin{proof}[Proof of \Cref{thm:main_calibration}]
    Properness and smoothness up to $t=0$ imply that \Cref{cor:main_calibration_detailed} is applicable with $(\mu_0,\T_0)=(\mu_{F_0},\T_{F_0})$ and $U=\R^n$.
\end{proof}

\begin{remark}\label{rem:energy_decay_phi=0}
    The arguments in this section also remain valid if $\phi$ has comass at most one rather than being a calibration, with the same interpretation in the coflat case. In particular, taking $\phi=0$ recovers the classical volume dissipation identity whenever the initial volume is finite.

    A priori this statement also allows noncompact proper flows, but in fact there do not exist noncompact, finite-volume, proper, smooth mean curvature flows. Suppose on the contrary that such a flow $F\colon M\times[0,T)\to\R^n$ exists. Fix any $t_0\in(0,T)$. Since $M$ is noncompact and $F_{t_0}$ is proper, $F_{t_0}(M)$ is unbounded; hence we may choose $x_j=F(p_j,t_0)$ with $|x_j|\to\infty$.
    For the backward heat kernel $\rho_{x_0,t_0}(x,t)\vcentcolon=(4\pi(t_0-t))^{-\frac m2}\exp(-\frac{|x-x_0|^2}{4(t_0-t)})$,
    Huisken's monotonicity formula \cite{MR1030675} gives $\int_{\R^n}\rho_{x_j,t_0}\d\mu_0\geq\lim_{t\to t_0^-}\int_{\R^n}\rho_{x_j,t_0}\d\mu_t\geq1$. The last inequality holds since $x_j\in F_{t_0}(M)$ using the same argument as in \cite[Remark 4.16]{Ecker} in the embedded codimension-one case. 
    On the other hand, we have $\mu_0(\R^n)<\infty$ by assumption, while $0\leq\rho_{x_j,t_0}(\cdot,0)\leq(4\pi t_0)^{-m/2}$ and $\rho_{x_j,t_0}(x,0)\to0$ as $j\to\infty$ for every fixed $x\in\R^n$. 
    Thus the dominated convergence theorem implies $\int_{\R^n}\rho_{x_j,t_0}\d\mu_0\to0$, a contradiction.
\end{remark}

\Cref{lem:uniform_bounded_local_volume_new} also yields that a mean curvature flow with an oriented initial trace in a sequential way actually attains it in the sense of \Cref{def:initial_trace}. This observation is independent of the calibration energy or the uniform local volume control.

\begin{proposition}\label{prop:seq_initial_trace}
     Let $F\colon M^m\times(0,T)\to\R^n$ be a proper oriented mean curvature flow.
     Suppose that there exist a Radon measure
    $\mu_0$, an $m$-current $\T_0$, and a sequence $t_j\to0^+$ such that, as $j\to\infty$, 
    \begin{align}\label{eq:seq_initial_trace}
        \mu_{t_j}\rightharpoonup\mu_0 \quad \text{as Radon measures},\quad 
        \T_{t_j}\rightharpoonup \T_0 \quad \text{as currents.}
    \end{align}
    Then $F$ has the oriented initial trace $(\mu_0,\T_0)$ in the sense of \Cref{def:initial_trace}.
\end{proposition}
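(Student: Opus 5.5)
We need the full limit $t\to0^+$, not just along $t_j$. The plan has three steps: uniform mass bounds, a continuity estimate for $\int\zeta\,\d\mu_t$, and uniqueness of subsequential limits for the currents.

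\emph{Step 1 (uniform mass bounds).} Given a ball $B_R(x_0)$, we want $\sup_{0<t\le T'}\mu_t(B_R(x_0))<\infty$ for some $T'>0$. Lemma~\ref{lem:uniform_bounded_local_volume_new} needs a global bound on $\mu_0$, which we do not have, so we rerun its proof locally with the cutoff $\zeta$ centered at points of a bounded region. This gives
\[
\mu_t(B_{1/2}(y))\le 8\,\mu_0(\overline{B_1(y)})
\]
for $t\le \tfrac1{8m}$, using weak convergence along $t_j$ and upper semicontinuity on compact sets. Covering $B_R(x_0)$ by finitely many half-balls then yields the claim; for larger radii we rescale the cutoff. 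Since $\|\T_t\|\le\mu_t$, the currents $\T_t$ get locally uniform mass bounds as well.

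\emph{Step 2 (measures).} We show $\int\zeta\,\d\mu_t\to\int\zeta\,\d\mu_0$ for every $\zeta\in C_c^\infty(\R^n)$. By \eqref{eq:27-03_01}, the function $t\mapsto\int\zeta\,\d\mu_t - C_\zeta t$ is nonincreasing, where $C_\zeta=\sup|\D^2\zeta|\cdot m$ bounds $-\operatorname{div}_\top\D\zeta$, and the $-|H|^2\zeta$ term has a sign when $\zeta\ge0$. This is Brakke's classical argument, and it is the point where Step 1 is not yet needed. Hence for $\zeta\ge0$ the limit $\lim_{t\to0^+}\int\zeta\,\d\mu_t$ exists, and it must equal the limit along $t_j$, namely $\int\zeta\,\d\mu_0$. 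A general $\zeta$ is the difference of two nonnegative test functions, for example $\zeta+c\psi$ and $c\psi$ with $\psi\ge0$ equal to $1$ on the support of $\zeta$. Density together with the mass bounds then gives weak convergence against $C_c$ functions.

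\emph{Step 3 (currents).} Fix $\omega\in\Omega^m_c(\R^n)$. We estimate $\frac{\d}{\d t}\T_t(\omega)$ by Cartan's formula as in Lemma~\ref{lem:loc_calib_evolution}:
\[
\T_{t_2}(\omega)-\T_{t_1}(\omega)=\int_{t_1}^{t_2}\int_M F_t^*(\iota_H\d\omega)\,\d t,
\]
since the exact term integrates to zero. This gives the bound $|\T_{t_2}(\omega)-\T_{t_1}(\omega)|\le\|\d\omega\|_\infty\int_{t_1}^{t_2}\int_{\operatorname{supp}\omega}|H|\,\d\mu_t\,\d t$. From Brakke's identity with a cutoff $\zeta\ge\chi_{\operatorname{supp}\omega}$ we get $\int_0^{T'}\int\zeta|H|^2\,\d\mu_t\,\d t<\infty$, using Step 1 and monotone convergence as $s\to0^+$. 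Cauchy--Schwarz together with the mass bound then makes the right-hand side at most $C(t_2-t_1)^{1/2}$. So $\T_t(\omega)$ is Cauchy as $t\to0^+$, and its limit along $t_j$ is $\T_0(\omega)$; hence $\T_t\rightharpoonup\T_0$.

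The main obstacle is Step 1, localizing Lemma~\ref{lem:uniform_bounded_local_volume_new} without a global bound on $\mu_0$. An alternative is to avoid it altogether: integrate the monotonicity of Step 2 with a cutoff $\zeta\ge0$ to bound both $\sup_t\int\zeta\,\d\mu_t$ and $\int\!\!\int\zeta|H|^2$ directly, which is all that Step 3 uses.
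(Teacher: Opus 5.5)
Your main line is correct, and Steps 1 and 3 are essentially the paper's own argument. The paper also uses the fact that \eqref{eq:01_09_26}, inside the proof of \Cref{lem:uniform_bounded_local_volume_new}, is already a local estimate. So no global bound on $\mu_0$ is needed, and no rescaling of the cutoff either: covering a compact set by half-balls suffices. The paper then treats the currents exactly as you do. It derives $\int_0^{\hat T}\int_{K_0}|H|^2\,\d\mu_r\,\d r<\infty$ from the Brakke identity and the mass bound, and then shows that $\T_t(\omega)$ is Cauchy. The only real difference is Step 2. The paper handles the measures with the same Cauchy estimate, applying \eqref{eq:29-05_01} together with the $L^2$ bound on $H$. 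You instead use Brakke's monotonicity of $\int\zeta\,\d\mu_t-C_\zeta t$ for $\zeta\ge0$. Both work, although yours is redundant once the estimate from Step 3 is in hand.

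Your claim that Step 2 does not need Step 1 is false, however, and with it the ``alternative'' in your last paragraph fails. The pointwise bound $-\operatorname{div}_\top\D\zeta\le m\|\D^2\zeta\|_\infty$ only yields $\partial_t\int\zeta\,\d\mu_t\le m\|\D^2\zeta\|_\infty\,\mu_t(\operatorname{supp}\zeta)$. So the correct constant is $C_\zeta=m\|\D^2\zeta\|_\infty\sup_{0<t\le\hat T}\mu_t(\operatorname{supp}\zeta)$, which is finite only because of Step 1.

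With a time-independent cutoff, this differential inequality never closes. The quantity $\mu_t(\operatorname{supp}\zeta)$ is not controlled by $\int\zeta\,\d\mu_t$, since $\zeta$ degenerates near the boundary of its support. Hence you cannot extract local mass bounds, or $\int\!\!\int\zeta|H|^2<\infty$, from it alone. What closes the estimate is the time-dependent cutoff with $\partial_r\zeta-\operatorname{div}_\top(\D\zeta)\le0$ from \eqref{eq:27-03_02}.

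So Step 1 is not an obstacle to be bypassed; it is the indispensable input for both Step 2 and Step 3. Since you prove it first, your proof goes through once $C_\zeta$ is corrected, but the suggested shortcut is circular.
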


\begin{proof}
    We first note that any compact set $K\subset\R^n$ can be covered by finitely many balls of radius $1/2$ so that, for any $\hat T <\min\{ \frac{1}{8m},T\}$, \eqref{eq:01_09_26} implies
    \begin{align}\label{eq:31_08_26_2}
        \sup_{0<t\leq \hat T}\mu_t(K)<\infty.
    \end{align}
    
    Let $0\leq \zeta \in C_c^\infty(\R^n)$ and $K_0\vcentcolon = \operatorname{supp}\zeta$.
    Fix $\tilde\gamma \in C_c^\infty(\R^n)$ with $\tilde\gamma=1$ on $K_0$, and let $K_1\vcentcolon=\operatorname{supp}\tilde\gamma$.
    Equation \eqref{eq:29-05_01} with $\tilde\eta=\tilde\gamma^2$ implies that for all $0<s\leq t<\hat T$
    \begin{align}\label{eq:31_08_26_3}
        \int\tilde\gamma^2\d\mu_t + \int_s^t \int|H|^2 \tilde\gamma^2 \d\mu_r\d r = \int\tilde\gamma^2\d\mu_s + 2\int_s^t \int \langle \D\tilde\gamma, H\rangle \tilde\gamma \d\mu_r \d r,
    \end{align}
    so that by Young's inequality and sending $s=t_j\to 0^+$, $t\to \hat T^-$, we obtain
    \begin{align}\label{eq:01_09_26_2}
        \frac12\int_0^{\hat T} \int_{K_0}|H|^2\d\mu_r\d r \leq \int \tilde\gamma^2\d\mu_0 + C \Vert \D\tilde\gamma\Vert_\infty^2 \int_0^{\hat T} \mu_r(K_1)\d r < \infty,
    \end{align}
    where finiteness of the last term follows by \eqref{eq:31_08_26_2}. 
    Now, \eqref{eq:29-05_01} with $\tilde \eta =\zeta$ yields
    \begin{align}
        \Big|\int \zeta\d\mu_t - \int \zeta\d\mu_s\Big|&\leq \Vert\zeta\Vert_\infty\int_s^t\int_{K_0}|H|^2\d \mu_r\d r+ \Vert \D \zeta\Vert_\infty \int_s^t \int_{K_0}|H|\d\mu_r dr\\
        &\leq C(1+\Vert\zeta\Vert_\infty) \int_s^t \int_{K_0}|H|^2\d\mu_r\d r + C\Vert \D\zeta\Vert_\infty^2 \int_s^t \mu_r (K_0) \d r.
    \end{align}
    By \eqref{eq:31_08_26_2} and \eqref{eq:01_09_26_2}, it follows that $t\mapsto \int \zeta\d\mu_t$ is Cauchy as $t\to 0^+$, hence $\mu_t \rightharpoonup \mu_0$ as Radon measures. 
    
    For the currents, we take $\omega\in \Omega^m_c(\R^n)$ and compute using \eqref{eq:0523_01} and \eqref{eq:09-04_05}
    \begin{align}
        \partial_t \int_MF_t^*\omega = \int_M i_t^*\left(\iota_{\partial_t}(F^*(\d\omega))\right).
    \end{align}
    Evaluating in positive normal coordinates as in \eqref{eq:10-04_03} (with $\tau_i=\partial_i F(p,t)$) yields
    \begin{align}
        \left(i_t^*\left(\iota_{\partial_t}(F^*(\d\omega))\right)\right)_p(\partial_1, \dots, \partial_m)= \d\omega_{F(p,t)}(H(p,t), \tau_1, \dots, \tau_m).
    \end{align}
     Now, by \eqref{eq:31_08_26_2} and \eqref{eq:01_09_26_2} with $K_0\vcentcolon=\operatorname{supp}\omega$ we find
    \begin{align}
          \big|\T_t(\omega) - \T_s(\omega)\big|&\leq \Vert\mathrm{d}\omega\Vert_\infty \int_s^t \int_{K_0} |H|\d\mu_r \d r \\
          &\leq C \int_s^t\int_{K_0} |H|^2 \d\mu_r\d r + C\Vert \d\omega\Vert_\infty^2 \int_s^t \mu_r(K_0)\d r.
    \end{align}
    Exactly as for the measures, we conclude $\T_t\rightharpoonup \T_0$ as currents as $t\to 0^+$. 
\end{proof}

\subsection{The one-dimensional case revisited}\label{subsec:1D}

In the rest of this section, we gain more insight into special properties in the one-dimensional case.

We first show that, in the whole space $U=\R^n$, finite calibration energy for curves does not enlarge the admissible class beyond the unidirectional case.

\begin{proposition}\label{prop:curve_to_unidirectional}
    Let $\gamma\colon\R\to\R^n$ be an immersed curve parametrized by arclength, and assume that $\E_\phi[\gamma]<\infty$ for some coflat calibration $\phi$ of degree $1$ on $\R^n$.
    Then $\gamma$ is proper, and there exists $E\in\mathbf{Gr}_1^+(\R^n)=\mathbf{S}^{n-1}$ such that the unidirectional calibration $\phi_E=\langle\cdot,E\rangle$ satisfies $\E_{\phi_E}[\gamma]\le \E_\phi[\gamma]<\infty$.
\end{proposition}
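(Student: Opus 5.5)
The approach is to write the degree-one coflat calibration as the differential of a $1$-Lipschitz potential. This turns the calibration energy on any subinterval into a comparison between arclength and the increment of the potential. From that comparison, both properness and the choice of $E$ follow by a chord argument.

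First I produce the potential. Since $\phi$ is distributionally closed on $\R^n$ by \eqref{eq:coflat_distributionally_closed}, the mollifications $\phi_\varepsilon=\rho_\varepsilon*\phi$ are smooth closed $1$-forms on all of $\R^n$ with comass at most $1$, as in the proof of \Cref{lem:loc_calib_evolution}. Hence $\phi_\varepsilon=\d f_\varepsilon$ with $f_\varepsilon(0)=0$ and $|\D f_\varepsilon|\le1$. Arzel\`a--Ascoli then gives a subsequence $f_\varepsilon\to f$ locally uniformly, with $f$ $1$-Lipschitz.

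Next I need, for all $a<b$,
\begin{equation}
f(\gamma(b))-f(\gamma(a))=\int_a^b\phi(\gamma')\d s.
\end{equation}
This holds for each $f_\varepsilon$ and $\phi_\varepsilon$ by the fundamental theorem of calculus. To pass to the limit I use that $\phi_\varepsilon\to\phi$ locally uniformly off $S_\phi$ and that $|\phi_\varepsilon(\gamma')|\le1$. The key point, and the main technical step, is that $\{s:\gamma(s)\in S_\phi\}$ has Lebesgue measure zero. This holds because $\gamma$ is a local isometric embedding of $\R$, so locally this set is the preimage of an $\mathcal H^1$-null set under a bi-Lipschitz map. Dominated convergence then gives the identity. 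Consequently, for $a<b$,
\begin{equation}
\int_a^b(1-\phi(\gamma'))\d s=(b-a)-\bigl(f(\gamma(b))-f(\gamma(a))\bigr)\ge(b-a)-|\gamma(b)-\gamma(a)|.
\end{equation}
In particular $|\gamma(b)-\gamma(a)|\ge|b-a|-\E_\phi[\gamma]$, which immediately gives properness of $\gamma$.

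For the direction, I choose $a_k\to-\infty$ and $b_k\to+\infty$. Then $|\gamma(b_k)-\gamma(a_k)|\ge b_k-a_k-\E_\phi[\gamma]>0$ for large $k$, so the unit vectors
\begin{equation}
E_k\vcentcolon=\frac{\gamma(b_k)-\gamma(a_k)}{|\gamma(b_k)-\gamma(a_k)|}
\end{equation}
are defined, and after passing to a subsequence $E_k\to E\in\S^{n-1}$. Using $f(\gamma(b_k))-f(\gamma(a_k))\le|\gamma(b_k)-\gamma(a_k)|$, we get
\begin{equation}
\int_{a_k}^{b_k}(1-\langle\gamma',E_k\rangle)\d s=(b_k-a_k)-|\gamma(b_k)-\gamma(a_k)|\le\int_{a_k}^{b_k}(1-\phi(\gamma'))\d s\le\E_\phi[\gamma].
\end{equation}
The integrand on the left is nonnegative. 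So for any fixed bounded interval $J$, the integral over $J$ is at most $\E_\phi[\gamma]$ for large $k$. Letting $k\to\infty$, where $\langle\gamma',E_k\rangle\to\langle\gamma',E\rangle$ uniformly, gives $\int_J(1-\langle\gamma',E\rangle)\d s\le\E_\phi[\gamma]$. Exhausting $\R$ by such $J$ yields $\E_{\phi_E}[\gamma]\le\E_\phi[\gamma]$.

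The only delicate points are the potential construction in the coflat setting and the null-set argument justifying the chain rule along $\gamma$. The rest is elementary.
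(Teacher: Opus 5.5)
Your proof is correct and follows the paper's route. A $1$-Lipschitz potential for $\phi$ turns the energy on $[a,b]$ into $(b-a)$ minus the potential increment; this gives properness via the chord inequality, bounds the direction energy of the normalized chords, and their limit furnishes $E$. The differences are minor: you build the potential by mollification and Arzel\`a--Ascoli instead of the paper's distributional Poincar\'e lemma and chain rule for $u\circ\gamma$, and you take a subsequential limit of the chord directions by compactness of $\S^{n-1}$, whereas the paper's Cauchy estimate $2r|E_r-E_{r'}|^2\le 8\E_\phi[\gamma]$ shows the full limit exists.
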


\begin{proof}
    Since $\phi$ has bounded coefficients and $\d\phi=0$ distributionally on the simply connected $\R^n$, there exists $u\in W^{1,\infty}_\textrm{loc}(\R^n)$ such that $\d u=\phi$ a.e. Moreover, since $\phi$ has comass $1$, the function $u$ has a $1$-Lipschitz representative which is smooth on $\R^n\setminus S_\phi$ and satisfies $\d u=\phi$ there. Then $u\circ\gamma$ is absolutely continuous and, since $\mathcal{L}^1(\gamma^{-1}(S_\phi))=0$, where $\mathcal{L}^1$ denotes the Lebesgue measure on $\R$, we have $(u\circ\gamma)'=\phi(\tau)$ a.e.

    %Since $\phi$ is closed, there exists $u\colon\R^n\to\R$ such that $\phi=\d u$.
    Writing $f(s)\vcentcolon = s-u(\gamma(s))$, for all $s_2>s_1$ we have
    \begin{align}
        s_2-s_1 - u(\gamma(s_2)) + u(\gamma(s_1)) = f(s_2)-f(s_1) \leq \int_{\R} f'(s)\d s=\E_\phi[\gamma]<\infty.
    \end{align}
    Since $u$ is $1$-Lipschitz, $\gamma$ is proper, and in particular for any $r>0$, we obtain
    \begin{align}
        2r-|\gamma(r)-\gamma(-r)| \leq \E_\phi[\gamma] < \infty.    
    \end{align}
    In addition, for $r$ sufficiently large, $E_r\vcentcolon=\frac{\gamma(r)-\gamma(-r)}{|\gamma(r)-\gamma(-r)|}\in\mathbf{S}^{n-1}$ is well-defined and
    \begin{align}
        \label{eq:09-05-26}\frac12\int_{-r}^r |\tau-E_r|^2\d s =\int_{-r}^r (1-\langle\tau,E_r\rangle)\d s = 2r-\langle \gamma(r)-\gamma(-r),E_r\rangle \le \E_\phi[\gamma].
    \end{align}
    Hence, for $r'\ge r$, we estimate
    \begin{align}
        2r|E_r-E_{r'}|^2&=\int_{-r}^r |E_r-E_{r'}|^2\d s \\
        &\le 2\int_{-r}^r |\tau-E_r|^2\d s + 2\int_{-r}^r |\tau-E_{r'}|^2\d s \le 8\E_\phi[\gamma].
    \end{align}
    Thus $\{E_r\}$ is a Cauchy sequence as $r\to\infty$, hence converging to some $E\in\mathbf{S}^{n-1}$.
    Taking $r\to\infty$ in \eqref{eq:09-05-26} and using Fatou's lemma, we obtain $\E_{\phi_E}[\gamma]\le \E_\phi[\gamma]$.
\end{proof}

In higher dimensions, there do exist nontrivial ends which have finite energy for some variable-coefficient smooth calibration, whereas the energy diverges for all constant-coefficient calibrations.
This occurs even in codimension one.
Here we record the important fact that if the calibration $\phi$ has constant coefficients, then it satisfies the same scaling property as volume:
\begin{align}\label{eq:scaling}
    \E_\phi[\lambda F] = \lambda^m\E_\phi[F] \qquad \text{for }\lambda>0.
\end{align}

\begin{example}
    There exists an entire minimal graph, i.e., $F\colon \R^8\to\R^9$, $F=(\cdot, f)$ for $f\colon \R^8\to\R$, and $H\equiv0$, whose tangent cone at infinity is the nonplanar cone $C_{3,3}\times\R$, where $C_{3,3}$ denotes the Simons cone
    \[
    C_{3,3}\vcentcolon =\{(u,v)\in\R^4\times\R^4 \mid |u|=|v|\}.
    \]
    Such examples were constructed by Bombieri--De Giorgi--Giusti \cite{MR250205}, and their asymptotic behavior was refined in \cite{MR2846486}.
    
    Writing points in $\R^9$ as $(x,y)\in \R^8\times \R$, we define
    \[
    X(x,y)\vcentcolon =\frac{(-\mathrm Df(x),1)}{\sqrt{1+|\mathrm D f(x)|^2}},
    \qquad
    \phi\vcentcolon =\iota_X(\d x_1\wedge\cdots\wedge \d x_8\wedge \d y).
    \]
    Then $\Div X=0$, hence $\d\phi=0$, and $|X|=1$, so $\phi$ is a smooth calibration on $\R^9$.
    Along $F$, the vector field $X$ coincides with the unit normal, and therefore
    \[
    \phi(\tau)=1, \qquad \E_\phi[F]=0.
    \]
    Moreover, in this case, $G(\phi_{(x,y)})=\{\tau_F(x)\}$ for all $(x,y)\in\R^9$, so the calibrated plane is a moving singleton.

    On the other hand, if $\psi$ is a constant-coefficient calibration, then, since we are in codimension one, $G(\psi)$ is a singleton (cf.\ \Cref{subsec:constant_coeff}), and if $\E_\psi[F]<\infty$, by scaling \eqref{eq:scaling}, we conclude that $\E_\psi[\lambda^{-1}F]\to 0$ as $\lambda\to\infty$. In particular, every tangent cone at infinity has to lie in $G(\psi)$,
    hence is given by a (unique) hyperplane. Since the tangent cone is $C_{3,3}\times\R$, this is impossible.
    Thus $\E_\psi[F]=\infty$.
\end{example}

With a similar argument, one can show that $C_{3,3}$ has infinite calibration energy for all constant-coefficient calibrations, whereas it is calibrated by some coflat calibration in $\R^8$ \cite{MR666108} (see also \cite{MR3568077}).

% Although the above translation-based construction of a suitable calibration works only in codimension one, a new construction of variable-coefficient calibrations from higher-codimensional minimal graphs is recently proposed in \cite{tsai2026calibrating}, under suitable comass conditions.

\begin{remark}\label{rem:m=1_volume_bound}
For $m=1$, the assumption that the initial datum in \Cref{thm:main_calibration} has uniformly finite local volume is redundant.
Consider a noncompact connected component and let $\gamma\colon \mathbb R\to\mathbb R^n$ be its arclength parametrization. As in the proof of \Cref{prop:curve_to_unidirectional}, we have $\phi=\d u$ for some $1$-Lipschitz function $u$. For any $s_1,s_2\in \gamma^{-1}(B_1(x_0))$ with $s_1\leq s_2$, we obtain $$s_2-s_1=u(\gamma(s_2))-u(\gamma(s_1))+\int_{s_1}^{s_2} (1-\phi_{\gamma(s)}(\gamma'(s)) ) \d s \leq 2+\E_\phi[\gamma].$$ Hence $\mathcal{L}^1(\gamma^{-1}(B_1(x_0))) \leq \operatorname{diam}(\gamma^{-1}(B_1(x_0))) \leq 2+\E_\phi[\gamma],$ and therefore $$\sup_{x_0\in\mathbb R^n} \mu_\gamma(B_1(x_0)) = \sup_{x_0\in\mathbb R^n} \mathcal{L}^1(\gamma^{-1}(B_1(x_0))) \leq 2+\E_\phi[\gamma] < \infty.$$ 
Thus finite calibration energy implies uniformly finite local volume for every connected component. 
\Cref{thm:main_calibration} may then be applied on each connected component, and the resulting identities can be summed by monotone convergence. 
Accordingly, in dimension $m=1$, the assumption that the initial datum has uniformly finite local volume can be omitted.
\end{remark}

\section{Applications}\label{sec:application}

In this section, we discuss the applications of the energy identity \eqref{eq:energy_identity} that we announced in \Cref{subsec:cone_intro,subsec:intro_application} and compare them with previous works.

\subsection{The case of zero initial energy and cone rigidity}\label{sec:cone}

The zero-energy case yields the following general dynamical rigidity principle for singular calibrated initial traces.

\begin{theorem}\label{thm:calibrated_initial_trace}
    Let $F\colon M^m\times(0,T)\to\R^n$ be a proper oriented mean curvature flow.
    Let $U\subset\R^n$ be an open set such that $F_t(M)\subset U$ for all $t\in(0,T)$, and let $\phi$ be a coflat calibration of degree $m$ on $U$.
    Suppose that $F$ has an oriented initial trace $(\mu_0,\T_0)$ with $\mu_0$ concentrated on $U$ satisfying \eqref{eq:initial_trace_local_volume}.
    % Suppose also that $(\mu_0,\T_0)$ is calibrated by $\phi$ in the sense that
    % \begin{equation}\label{eq:calibrated_initial_trace}
    %     \mu_0=\|\T_0\|,
    %     \qquad
    %     \phi(\vec{T}_0)=1\quad\|\T_0\|\text{-a.e.}
    % \end{equation}
    If $\E_\phi[\mu_0,\T_0]=0$, then the flow $F$ is stationary in the sense that for every $t\in(0,T)$,
    \begin{equation}\label{eq:calibrated_initial_trace_conclusion}
        \phi(\tau)=1\quad\mu_t\text{-a.e.},
        \qquad
        H\equiv0,
        \qquad
        \mu_t=\mu_0,
        \qquad
        \T_t=\T_0.
    \end{equation}
    In particular, the oriented initial trace is necessarily induced by a smooth proper oriented immersion.
\end{theorem}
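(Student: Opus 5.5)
The plan is to apply \Cref{cor:main_calibration_detailed} with $\E_\phi[\mu_0,\T_0]=0$. Its hypotheses are exactly those assumed here, so the energy identity \eqref{eq:initial_trace_energy_identity} holds with $t_1=0$. For every $t\in(0,T)$ this gives
\[
\E_\phi[F_t]+\int_0^t\int_M|H|^2\d\mathrm{vol}\d r=0.
\]
Both terms are nonnegative, so both vanish. The first gives $\phi(\tau)=1$ $\mathrm{vol}_{F_t}$-a.e., hence $\mu_t$-a.e. The second gives $H=0$ a.e.\ in spacetime, and by smoothness of $F$ on $M\times(0,T)$ we get $H\equiv0$.

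Next I show that $\mu_t$ and $\T_t$ are constant in time. With $H\equiv0$, formula \eqref{eq:29-05_01} with $V=H=0$ gives $\partial_t\int\tilde\eta\d\mu_t=0$ for every $\tilde\eta\in C_c^\infty(\R^n)$. Likewise, the computation in the proof of \Cref{prop:seq_initial_trace} gives $\partial_t\T_t(\omega)=\int_M i_t^*(\iota_{\partial_t}F^*\d\omega)$, whose integrand is $\d\omega(H,\tau_1,\dots,\tau_m)=0$. So $\mu_t$ and $\T_t$ are independent of $t\in(0,T)$. Letting $t\to0^+$ and using \eqref{eq:initial_trace} then yields $\mu_t=\mu_0$ and $\T_t=\T_0$. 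The final claim follows at once: for any fixed $t_0\in(0,T)$, the pair $(\mu_0,\T_0)=(\mu_{F_{t_0}},\T_{F_{t_0}})$ is induced by the smooth proper oriented immersion $F_{t_0}$.

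I expect no step to be a genuine obstacle, since the analytic work is already contained in \Cref{cor:main_calibration_detailed}. The only points needing care are the following.
\begin{enumerate}
\item Passing from ``a.e.\ in spacetime'' to $H\equiv0$ pointwise, which uses continuity of $H$.
\item The observation that a normal velocity $V=0$ does not preclude tangential motion. Tangential motion is harmless because both $\mu_t$ and $\T_t$ are invariant under reparametrization, and the formulas used above already account for the tangential component.
\end{enumerate}
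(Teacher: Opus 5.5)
Your proposal is correct and follows the paper's own proof: zero initial energy in the identity of \Cref{cor:main_calibration_detailed} forces $\E_\phi[F_t]=0$ and $H\equiv0$, and the initial-trace convergence then identifies the constant pair $(\mu_t,\T_t)$ with $(\mu_0,\T_0)$. The only difference is that you explicitly justify the time-constancy of $(\mu_t,\T_t)$ via \eqref{eq:29-05_01} and the current computation in the proof of \Cref{prop:seq_initial_trace}, which the paper summarizes as ``stationary up to tangential motions.''
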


\begin{proof}
      The energy identity in \Cref{cor:main_calibration_detailed} gives $\E_\phi[F_t]=0$ and $H\equiv0$ for $t\in(0,T)$.
    Thus $F$ is stationary up to tangential motions on $(0,T)$, so the initial-trace convergence identifies the constant pair $(\mu_t,\T_t)$ with $(\mu_0,\T_0)$.
\end{proof}

This implies a strong rigidity property for calibrated cones.
A locally integral cycle $C$ in $\R^n$ is said to be conical if $\lambda C=C$ holds for all $\lambda>0$, where $\lambda C$ denotes the push-forward of $C$ by the dilation $x\mapsto \lambda x$.
Also, $C=\vec{T}_C\|C\|$ is said to be calibrated if $\phi(\vec{T}_C)=1$ holds $\|C\|$-a.e.\ for a coflat calibration $\phi$ on $\R^n$.

\begin{theorem}\label{thm:calibrated_cone_rigidity}
    Let $C$ be a locally integral conical calibrated $m$-cycle in $\R^n$.
    Suppose that a proper oriented mean curvature flow $F\colon M^m\times(0,T)\to\R^n$ has oriented initial trace $(\mu_0,\T_0)=(\|C\|,C)$.
    Then $F$ is stationary. In particular, $C$ is a finite union of planes.
\end{theorem}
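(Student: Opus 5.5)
We will reduce \Cref{thm:calibrated_cone_rigidity} to \Cref{thm:calibrated_initial_trace} with $U=\R^n$ and $\phi$ the coflat calibration calibrating $C$. Two hypotheses have to be checked. First, $\E_\phi[\|C\|,C]=0$. Second, the local volume bound \eqref{eq:initial_trace_local_volume} holds for $\mu_0=\|C\|$.

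The zero-energy condition is immediate from \eqref{eq:calibrated_initial_trace}. We have $\mu_0=\|C\|=\|\T_0\|$, and $\phi(\vec T_C)=1$ holds $\|C\|$-a.e. by assumption, so the measure $\mu_0-\phi(\vec T_0)\|\T_0\|$ vanishes. Also, $\mu_0$ is trivially concentrated on $\R^n$.

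The uniform local volume bound is where conicality enters, and I expect it to be the main step. Since $C$ is calibrated, it is mass-minimizing, and in particular stationary. For a cone this gives the exact density identity $\|C\|(B_r(x))\le\|C\|(B_r(0))=\theta\,\omega_m r^m$ for every $x$, where $\theta$ is the density at the origin. I would justify this via the monotonicity formula: the density ratio at $x$ is nondecreasing in $r$, and its limit as $r\to\infty$ equals the density of the blow-down, which is $C$ itself by conicality, hence $\theta$. Local integrality gives $\theta<\infty$, because $\|C\|(B_1(0))<\infty$. Taking $r=1$ yields $\sup_x\mu_0(B_1(x))\le\theta\omega_m$.

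If one prefers to avoid monotonicity, there is a purely calibration-based alternative. Compare $\|C\|(B_1(x))$ with $\|C\|(B_2(0))$ plus a slicing/cone-filling bound for $\partial(C\mres B_1(x))$. However, for $|x|$ large the ball $B_1(x)$ meets $C$ in a region that looks like a union of tangent planes at scale $|x|$. The cleanest route is therefore the monotonicity limit argument above, which also handles multiplicity.

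With both hypotheses in place, \Cref{thm:calibrated_initial_trace} gives $H\equiv0$ and $\mu_t=\|C\|$, $\T_t=C$ for all $t\in(0,T)$. In particular $\|C\|=\mu_{F_t}$ is the measure of a smooth proper minimal immersion. It remains to see that a cone which is the image of a smooth proper immersion is a finite union of planes. Near the origin, $F_t^{-1}(\overline{B_1})$ is compact, so the immersed image through $0$ consists of finitely many smooth sheets, each passing through $0$. Each sheet is smooth at $0$, so its tangent cone at $0$ is a plane with integer multiplicity. By conicality the tangent cone of $C$ at $0$ is $C$ itself. Hence $C$ is a finite sum of planes with integer multiplicities.
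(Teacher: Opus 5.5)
Your proposal is correct and follows the paper's proof: you reduce to \Cref{thm:calibrated_initial_trace} with $U=\R^n$, and you get the local volume bound \eqref{eq:initial_trace_local_volume} from stationarity of the calibrated cone, the monotonicity formula and conicality. The paper compares $B_1(x_0)$ with $B_{1+|x_0|}(x_0)\subset B_{1+2|x_0|}(0)$ to get the bound $2^m\|C\|(B_1(0))$, while your limit $r\to\infty$ gives the sharper $\theta\,\omega_m$. For the last step you identify $C$ with its own tangent cone at the origin, which is the sum of the tangent planes of the finitely many smooth sheets of $F_t$ through $0$; this is a slightly more explicit version of the paper's remark that each component of a smooth complete cone passes through the origin, with local integrability at the origin giving finiteness.
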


\begin{proof}
    Since $C$ is calibrated, it is locally mass-minimizing and hence stationary \cite{MR666108}.
    Therefore, by the monotonicity formula \cite[17.3]{MR756417}, for every $x_0\in\R^n$ and $R\geq1$, we have $\|C\|(B_1(x_0))\leq R^{-m}\|C\|(B_R(x_0))$.
    Taking $R=1+|x_0|$ and using $B_R(x_0)\subset B_{R+|x_0|}(0)$ and the conicality
    $\|C\|(B_r(0))=r^m\|C\|(B_1(0))$, we obtain
    \[
    \|C\|(B_1(x_0))\leq(\tfrac{1+2|x_0|}{1+|x_0|})^m\|C\|(B_1(0))\leq 2^m\|C\|(B_1(0)).
    \]
    Hence \eqref{eq:initial_trace_local_volume} holds, and
    \Cref{thm:calibrated_initial_trace} implies that $F$ is stationary and thus $C$ can be parameterized by a smooth proper immersion.
    Each connected component of such an entirely smooth complete cone passes the origin, thus being a plane.
    Local integrability at the origin yields finiteness.
\end{proof}

Hence, no singular calibrated cone can be resolved by a smooth proper oriented mean curvature flow.

As a particular corollary, we establish a general rigidity theorem for self-expanders that are asymptotic to calibrated cones, or more generally, merely admit a calibrated cone as a varifold-current blow-down.
An immersion $F\colon M^m\to\R^n$ is called a \emph{self-expander} if $F_t\vcentcolon =\sqrt{2t+1}\,F$ defines a mean curvature flow for $t\ge 0$; equivalently,
\begin{equation}\label{eq:self-expander}
    F^\perp = H.
\end{equation}

%With similar arguments as in the proof of \Cref{thm:self_expander}, one obtains a non-energetic rigidity result for self-expanders under weak (measure-current) asymptotic conicality, analogous to \Cref{thm:calibrated_cone_rigidity}.

\begin{corollary}\label{cor:calibrated_cone_rigidity_asymptotic_expander}
    Let $C$ be a locally integral conical calibrated $m$-cycle in $\R^n$.
    Let $F:M^m\to\R^n$ be a proper oriented self-expander and suppose that there exist $\lambda_j\to 0^+$ such that, as $j\to\infty$, $\mu_{\lambda_j F} \rightharpoonup \|C\|$ as Radon measures and $\T_{\lambda_j F} \rightharpoonup C$ as currents.
    Then $F$ must be a finite union of planes.
\end{corollary}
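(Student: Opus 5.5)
The plan is to turn the self-expander into a mean curvature flow on $(0,\infty)$ whose oriented initial trace is the cone, and then apply \Cref{thm:calibrated_cone_rigidity}. Concretely, set $F_t\vcentcolon=\sqrt{2t}\,F$ for $t>0$. Since $F$ satisfies $F^\perp=H$, this family solves $(\partial_tF_t)^\perp=H_{F_t}$. Indeed, $\partial_tF_t=(2t)^{-1/2}F$, and $H_{F_t}=(2t)^{-1/2}H_F$ by scaling. The flow $F\colon M\times(0,\infty)\to\R^n$ is proper and oriented, because $F$ is, and $F_t(M)\subset\R^n=U$. The only remaining task is to verify that $(\|C\|,C)$ is an oriented initial trace.

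By hypothesis, along the times $t_j\vcentcolon=\lambda_j^2/2\to0^+$ we have $F_{t_j}=\lambda_jF$. Hence $\mu_{t_j}\rightharpoonup\|C\|$ and $\T_{t_j}\rightharpoonup C$. This is exactly the sequential hypothesis \eqref{eq:seq_initial_trace} of \Cref{prop:seq_initial_trace}, which upgrades it to the full convergence $t\to0^+$ required in \Cref{def:initial_trace}. So $(\mu_0,\T_0)=(\|C\|,C)$ is an oriented initial trace of the flow. One subtlety is that \Cref{prop:seq_initial_trace} is stated for $F\colon M\times(0,T)\to\R^n$. We may simply restrict to $T=1$, and its proof uses only local properties near $t=0$, so this causes no difficulty.

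Now \Cref{thm:calibrated_cone_rigidity} applies. It uses the monotonicity formula to verify the local volume bound \eqref{eq:initial_trace_local_volume} for the cone, and then \Cref{thm:calibrated_initial_trace}. It yields that the flow is stationary, with $H\equiv0$ and $\mu_t=\|C\|$ for all $t$, and that $C$ is a finite union of planes. In particular, at $t=1/2$ we get $F_{1/2}=F$ with $H_F=0$. The expander equation then gives $F^\perp=0$, and $\mu_F=\mu_{1/2}=\|C\|$.

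It remains to conclude that $F$ itself, and not only its image measure, is a finite union of planes. Since $\mu_F=\|C\|$ is supported on finitely many planes, $F(M)$ lies in a finite union of $m$-planes. Each connected component of $M$ is a connected immersed $m$-manifold inside this union, hence maps into a single plane. Being proper, each component is a covering of that plane, and since planes are simply connected it is a diffeomorphism onto it. Finiteness of $\mu_F$ on balls limits the number of components. I expect this final identification to be the main care point, though it is routine. The rescaling and trace verification are straightforward given \Cref{prop:seq_initial_trace}.
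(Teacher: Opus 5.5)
Your proposal is correct and follows the paper's proof. The paper likewise rescales to $F_t=\sqrt{2t}\,F$, sets $t_j=\lambda_j^2/2$, upgrades the sequential convergence with \Cref{prop:seq_initial_trace}, and invokes \Cref{thm:calibrated_cone_rigidity}. The paper leaves your final identification step inside the proof of \Cref{thm:calibrated_cone_rigidity}, where $H\equiv0$ and $F^\perp\equiv0$ make each component a smooth complete cone and hence a plane. If you keep your support-based version instead, passing from an image in a finite union of planes to an image in a single plane needs a short continuity-of-tangent-plane argument, because the planes may intersect.
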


\begin{proof}
    Consider the self-expanding proper oriented flow $F_t=\sqrt{2t}\,F$, which also solves \eqref{eq:MCF} and set $t_j\vcentcolon=\frac12 \lambda_j^2$. Then, by assumption, $\mu_{F_{t_j}}\rightharpoonup \| C\|$ as Radon measures and $\T_{F_{t_j}}\rightharpoonup C$ as currents. \Cref{prop:seq_initial_trace} implies that $F_t$ has the oriented initial trace $(\|C\|,C)$ as $t\to0^+$.
    Hence \Cref{thm:calibrated_cone_rigidity} applies.
\end{proof}

Self-expanders are known to model the asymptotic behavior of the mean curvature flow \cite{MR1025164}, as well as the flow coming out of conical singularities \cite{MR1361726} as discussed in the introduction.
The problem of understanding and classifying self-expanders, first posed by Ecker--Huisken \cite[p.~466]{MR1025164}, remains an important topic.
It is also worth recalling that self-expanders, as well as other typical self-similar solutions, can be regarded as weighted minimal submanifolds \cite{ilmanen2026lectures}.

The theory of asymptotically conical self-expanders, as well as self-shrinkers, is well developed in the literature particularly in codimension one.
In addition to the results discussed in \Cref{subsec:cone_intro}, we also mention the uniqueness results for asymptotically conical self-shrinkers by Wang \cite[Theorem 1.3]{MR3194490} and for self-expanders by Bernstein \cite[Theorem 1.2]{MR4176548}, as well as Khan's extensions to all codimensions \cite[Theorems 1.1 and 1.4]{MR4669035}.
In the self-expander case, one needs to impose a rather strong decay at infinity, and without sufficient decay, uniqueness fails in general; see, e.g., \cite[Theorem 1.1]{MR2982716}, \cite[Corollary 1.4]{MR4492654}, and \cite[Corollary 1.11]{shao2025self}. 
We also refer to further conditional uniqueness results \cite{MR4055164,MR4126890,MR4399282,MR4524828} in codimension one.

% MEMO:
% \begin{itemize}
%     \item Cf.\ Park, Ancient mean curvature flow asymptotic to Simons cone.
%     %\item Comparison with dynamical instability contexts? Stuvard--Tonegawa, Motegi.
% \end{itemize}

\subsection{Uniqueness of self-expanders}\label{subsec:self-expander}

% An immersion $F\colon M^m\to\R^n$ is called a \emph{self-expander} if $F_t\vcentcolon =\sqrt{2t+1}\,F$ defines a mean curvature flow for $t\ge 0$; equivalently,
% \begin{equation}\label{eq:self-expander}
%     F^\perp = H.
% \end{equation}
% Self-expanders are known to model the asymptotic behavior of the mean curvature flow \cite{MR1025164}, as well as the flow coming out of conical singularities \cite{MR1361726}.
% The problem of understanding and classifying self-expanders, first posed by Ecker--Huisken \cite[p.~466]{MR1025164}, remains an important topic.
% It is also worth recalling that self-expanders, as well as other typical self-similar solutions, can be regarded as weighted minimal submanifolds \cite{ilmanen2026lectures}.

An important consequence of the energy identity \eqref{eq:energy_identity} is a uniqueness theorem for self-expanders (cf.\ \eqref{eq:self-expander}) in the regime of finite calibration energy, which is of a different nature from \Cref{cor:calibrated_cone_rigidity_asymptotic_expander}.
% If the calibration $\phi$ has constant coefficients, then it satisfies the same scaling property as volume:
% \begin{align}\label{eq:scaling}
%     \E_\phi[\lambda F] = \lambda^m\E_\phi[F] \qquad \text{for }\lambda>0.
% \end{align}
Recall that any constant-coefficient calibration $\phi$ satisfies the volume-like scaling property \eqref{eq:scaling}.
In fact, this remains true even if merely the calibration $\phi$ is coflat on $\R^n$ and \emph{dilation-invariant} in the sense that 
\begin{equation}
    \text{for $\mathcal H^m$-a.e.\ $x\in\R^n$, we have $\phi_{\lambda x}=\phi_x$ for all $\lambda>0$}.
\end{equation} 

\begin{theorem}\label{thm:self_expander}
    Let $F\colon M^m\to\R^n$ be a connected, proper oriented self-expander such that $\E_\phi[F]<\infty$ for a dilation-invariant coflat calibration $\phi$ of degree $m$ on $\R^n$.
    Then $F$ must be a plane, calibrated by $\phi$ and passing through the origin.
    
    In particular, if $\phi$ admits no calibrated plane, then there is no smooth proper oriented self-expander with finite $\phi$-calibration energy.
\end{theorem}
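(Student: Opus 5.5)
Consider the self-expanding flow $F_t\vcentcolon=\sqrt{2t+1}\,F$ for $t\ge0$, which is a proper oriented mean curvature flow smooth up to $t=0$ with $F_0=F$. By dilation invariance of $\phi$ (and the change of variables $x\mapsto\lambda x$, using that $\mathcal H^m$-null sets are $\mu_F$-null), the scaling law \eqref{eq:scaling} holds, so
\[
\E_\phi[F_t]=(2t+1)^{m/2}\E_\phi[F],
\]
and similarly $\int_M|H_{F_t}|^2\d\mathrm{vol}_{F_t}=(2t+1)^{\frac m2-1}\int_M|H_F|^2\d\mathrm{vol}_F$, since $H$ scales like $\lambda^{-1}$ and the volume like $\lambda^m$. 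The plan is to apply the energy identity and compare these two growth rates.

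To apply \Cref{cor:main_calibration_detailed} (with $U=\R^n$, initial trace $(\mu_F,\T_F)$), I need uniformly finite local volume of $F$; this is the main obstacle, since the statement only assumes finite energy. My first attempt is to avoid it by using \Cref{thm:main_calibration_detailed} directly: its hypothesis \eqref{eq:local_calibration_energy_bound} only asks for a uniform bound on $\int_{B_1(x_0)}(1-\phi(\tau))\d\mu_t$, and this is trivially bounded by $\E_\phi[F_t]=(2t+1)^{m/2}\E_\phi[F]$, uniformly on bounded time intervals. So the local volume hypothesis is not needed, and \eqref{eq:initial_trace_energy_identity} holds for all $0\le t_1\le t_2<\infty$.

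Inserting the scalings, the identity at $t_1=0$, $t_2=t$ reads
\[
(2t+1)^{m/2}\E_\phi[F]+\int_0^t(2r+1)^{\frac m2-1}\d r\int_M|H|^2\d\mathrm{vol}=\E_\phi[F].
\]
For $t>0$ the left side is at least $(2t+1)^{m/2}\E_\phi[F]$, which exceeds $\E_\phi[F]$ unless $\E_\phi[F]=0$; hence $\E_\phi[F]=0$, and then $\int_M|H|^2\d\mathrm{vol}=0$, i.e.\ $H\equiv0$ and $\phi(\tau)=1$ a.e. Then the expander equation gives $F^\perp=0$, so $\D_X\langle F,F\rangle$-type arguments show the position vector is everywhere tangent: $F$ is a minimal cone over the origin. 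Being smooth, connected, proper and conical (tangency of $F$ means $F(M)$ is invariant under dilations near each point), it must contain the origin in its closure and, by smoothness at the vertex (properness gives a point of $M$ mapping to $0$ as the cone is closed), equal its tangent plane there; so $F$ is a plane through the origin, and $\phi(\tau)=1$ shows it is calibrated by $\phi$ (the coflat issue is harmless: $\phi(\tau)=1$ holds off a null set and $\tau$ is constant). The final assertion is immediate: a finite-energy expander would yield a $\phi$-calibrated plane.
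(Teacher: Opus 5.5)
Your proposal is correct and follows the paper's proof essentially verbatim. You apply \Cref{thm:main_calibration_detailed} directly to $F_t=\sqrt{2t+1}\,F$, using the scaling $\E_\phi[F_t]=(2t+1)^{m/2}\E_\phi[F]$ to verify \eqref{eq:local_calibration_energy_bound} in place of a local-volume bound, and then compare with the energy identity to force $\E_\phi[F]=0$, $H\equiv0$, and hence $F^\perp\equiv0$, so that $F$ is a $\phi$-calibrated plane through the origin; your extra scaling of $\int_M|H|^2\d\mathrm{vol}$ and your more detailed cone-to-plane discussion only elaborate the same steps.
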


\begin{proof}
    We verify the assumptions of \Cref{thm:main_calibration_detailed}.
    Since $U=\R^n$, the validity of \eqref{eq:inclusion} is trivial.
    By the scaling property \eqref{eq:scaling} the self-expander $F_t=\sqrt{2t+1}\,F$ satisfies $\E_\phi[F_t]=(2t+1)^{\frac{m}{2}}\E_\phi[F]<\infty$, which in particular ensures \eqref{eq:local_calibration_energy_bound}.
    Then \Cref{thm:main_calibration_detailed} implies that for $t>0$ 
    \[
    (2t+1)^{\frac{m}{2}}\E_\phi[F]=\E_\phi[F_t] \leq \E_\phi[F_0]=\E_\phi[F].
    \]
    Hence $\E_\phi[F]=0$, so $\E_\phi[F_t]=0$ and, by \eqref{eq:initial_trace_energy_identity}, 
    %so the associated current $\T_F$ is calibrated by a coflat $\phi$, thus being locally mass-minimizing.
    %Since $F$ is smooth, this implies 
    we conclude $H_F\equiv0$.
    Then \eqref{eq:self-expander} yields $F^\perp\equiv0$, that is, $F$ is a connected complete smooth cone, thus being a plane through the origin.
    Since $\E_\phi[F]=0$, the plane is calibrated by $\phi$.
\end{proof}

We stress that the assumption of finite calibration energy in \Cref{thm:self_expander} is formulated purely in terms of the behavior of the ends, without any restriction inside, in contrast to standard Bernstein-type uniqueness theorems.
Unlike \Cref{cor:calibrated_cone_rigidity_asymptotic_expander}, the ends are not assumed to converge to any prescribed cone, nor even to admit an asymptotic cone; instead, only finite energy is imposed.
Here we need not assume the local-volume bound thanks to self-similarity.

Our admissible class of $\phi$ includes arbitrary constant-coefficient calibrations.
In this case, finite $\E_\phi$ does not, in general, prescribe a single asymptotic plane, but only requires the tangent planes to approach (in an integral sense) the set $G(\phi)$, which may be a large family of planes: classical examples include the set of complex planes, special Lagrangian planes, (co)associative planes, or Cayley planes \cite{MR666108}.
In fact, since $\phi$ can be singular, the admissible ends are a priori wider even in the codimension-one case.
In particular, Zhang's calibrations \cite{MR3568077,MR5057730} are admissible as they are dilation-invariant and their singularity is isolated at the origin.

In addition, even in the unidirectional case $\phi_E=\langle\cdot,E\rangle$, our result is substantially new.
It may be viewed as a uniqueness theorem with a Neumann-type boundary condition at infinity: one prescribes the asymptotic tangent direction rather than the asymptotic position. 
The Dirichlet counterpart is much easier: indeed, if a self-expander $F$ is asymptotic to a fixed plane $E$ at infinity, then for each $e\in\S^{n-1}$ orthogonal to $E$, the height function $h=\langle F,e\rangle$ satisfies $\Delta h+\frac{1}{2}g(\nabla|F|^2,\nabla h)-h=0$ and $h\to0$ at infinity, so the maximum principle yields $h\equiv0$.

The Neumann problem is much more subtle: the ends may converge to different planes, or even diverge; see \Cref{ex:topology,ex:single_end_multiplicity,ex:graphical} for more details on our admissible class.
A known result in this direction is Ding's uniqueness theorem in codimension one \cite[Theorem 4.2]{MR4055164}: a self-expander must be a hyperplane whenever the tangent cone at infinity is a hyperplane with an integer multiplicity.
Ding's proof is based on barriers and does not directly extend to higher codimension.
To the authors' knowledge, even in the unidirectional case, our result is the first higher-codimension uniqueness theorem for self-expanders formulated in terms of asymptotic tangent direction.

% We now turn to the proof of \Cref{thm:self_expander}, which is an almost direct consequence of \Cref{thm:main_calibration_detailed} and the scaling property.

The above argument for \Cref{thm:self_expander} also restricts possible self-expanding limits in rescaled graphical convergence problems.
We say that a mean curvature flow $F\colon \R^m\times[0,T)\to\R^n$ is \emph{graphical} if it is of the form
\[
F_t=(\cdot,f(\cdot,t))
\]
for some family of functions $f\colon\R^m\times[0,T)\to\R^{n-m}$.
In codimension one, Ecker--Huisken \cite{MR1025164} proved that if $F_0$ is \emph{asymptotically conical} and $f(\cdot,0)$ has bounded gradient, then the flow is graphical and immortal, and its rescalings
\begin{align}\label{eq:Ecker_Huisken_rescaling}
    \tilde{F}_t\vcentcolon =\frac{1}{\sqrt{2t+1}}F_t
\end{align}
converge locally smoothly to graphical self-expanders.
Recently, this was extended by Savas-Halilaj--Smoczyk \cite{MR4810572} to codimension two.

\begin{theorem}\label{thm:rescaled_limit}
    Let $\phi$ be a dilation-invariant coflat calibration of degree $m$ on $\R^n$.
    Let $F\colon \R^m\times[0,\infty)\to\R^n$ be a graphical mean curvature flow such that the initial datum $f(\cdot,0)$ has bounded gradient, and suppose that its rescaling $\tilde{F}_t$ locally smoothly converges to a self-expander $\tilde{F}_\infty$ along a sequence $t_j\to\infty$.
    If $\E_\phi[F_0]<\infty$, then $\lim_{j\to\infty}\E_\phi[\tilde{F}_{t_j}]=0$ and $\tilde{F}_\infty$ is a plane calibrated by $\phi$.
\end{theorem}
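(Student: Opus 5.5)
The plan is to combine the energy identity with the scaling property \eqref{eq:scaling}, in the same way as in the proof of \Cref{thm:self_expander}, and then pass to the limit using lower semicontinuity of the calibration energy under local smooth convergence.

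\emph{Step 1: verify the hypotheses of \Cref{thm:main_calibration}.} A graphical flow $F_t=(\cdot,f(\cdot,t))$ is an embedding of $\R^m$ with the product orientation. It is proper on compact time intervals, since $|F(x,t)|\ge|x|$. The initial datum $f(\cdot,0)$ is Lipschitz with $|\D f(\cdot,0)|\le L$. Hence for $x\in F_0^{-1}(B_1(x_0))$ the first components lie in a ball of radius $1$, and the area element is at most $C(m,L)$. This gives uniformly finite local volume. Since $\E_\phi[F_0]<\infty$ by assumption, \Cref{thm:main_calibration} yields
\[
\E_\phi[F_t]\le\E_\phi[F_0]\quad\text{for all } t\ge0.
\]
The coflat case is covered via \Cref{cor:main_calibration_detailed} with $U=\R^n$.

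\emph{Step 2: apply scaling.} Because $\phi$ is dilation-invariant, \eqref{eq:scaling} holds for it, so
\[
\E_\phi[\tilde F_t]=(2t+1)^{-m/2}\E_\phi[F_t]\le(2t+1)^{-m/2}\E_\phi[F_0].
\]
This tends to $0$ as $t\to\infty$, which proves $\lim_{j\to\infty}\E_\phi[\tilde F_{t_j}]=0$.

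\emph{Step 3: lower semicontinuity in the limit.} Local smooth convergence $\tilde F_{t_j}\to\tilde F_\infty$ gives, on each compact $K\subset\R^m$, $\tau_{\tilde F_{t_j}}\to\tau_{\tilde F_\infty}$ and $\mathrm{vol}_{\tilde F_{t_j}}\to\mathrm{vol}_{\tilde F_\infty}$ uniformly. One subtlety is that $\phi$ is only coflat, so $\phi\circ\tilde F_{t_j}$ need not converge pointwise near $S_\phi$. I would handle this through currents and measures rather than pointwise. The measures $\mu_{\tilde F_{t_j}}\mres\tilde F_{t_j}(K)$ converge, and the associated currents converge in mass norm locally. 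Moreover $\|\T\|(S_\phi)=0$, and $\phi$ is continuous off the closed set $S_\phi$. Mollifying as in the proof of \Cref{thm:main_calibration_detailed}, i.e.\ replacing $\phi$ by $\phi_\delta$ and using the uniform bound $|\phi_\delta(\tau)|\le1$, gives
\[
\int_K(1-\phi(\tau_{\tilde F_\infty}))\d\mathrm{vol}=\lim_{j\to\infty}\int_K(1-\phi(\tau_{\tilde F_{t_j}}))\d\mathrm{vol}\le\liminf_{j\to\infty}\E_\phi[\tilde F_{t_j}]=0.
\]
Exhausting $\R^m$ by compact sets $K$ then gives $\E_\phi[\tilde F_\infty]=0$. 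I expect this step to be the main obstacle: the graph maps converge locally smoothly but are not identical parametrizations, so care is needed with $\phi$ being discontinuous on $S_\phi$.

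\emph{Step 4: conclude.} The limit $\tilde F_\infty$ is a graph over $\R^m$, hence connected and proper (being a graph), with $\E_\phi[\tilde F_\infty]=0<\infty$. \Cref{thm:self_expander} applies and shows that $\tilde F_\infty$ is a plane through the origin calibrated by $\phi$. Alternatively, zero energy gives $\phi(\tau)=1$, so the limit is calibrated and hence minimal, $H\equiv0$. The self-expander equation then gives $F^\perp=0$, so the limit is a cone and therefore a plane.
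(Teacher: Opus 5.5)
Your argument is essentially the paper's proof. The gradient bound gives uniformly finite local volume, and \Cref{cor:main_calibration_detailed} together with \eqref{eq:scaling} gives $\E_\phi[\tilde F_{t_j}]\le(2t_j+1)^{-m/2}\E_\phi[F_0]\to0$; lower semicontinuity passes this to $\tilde F_\infty$, and the argument of \Cref{thm:self_expander} finishes.

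The one thing to correct is your justification in Step 3, which is more elaborate than needed and partly false. The currents of distinct graphs do not converge in mass norm, only in flat norm. Nor is there a real difficulty with pointwise convergence. Since $S_\phi$ is closed and $\phi$ is smooth on $\R^n\setminus S_\phi$, the integrands $1-\phi(\tau_{\tilde F_{t_j}})$ and the area elements converge at every $x\in\R^m$ with $\tilde F_\infty(x)\notin S_\phi$. This exceptional set is null because $\mu_{\tilde F_\infty}(S_\phi)=0$. Hence Fatou's lemma directly gives $\E_\phi[\tilde F_\infty]\le\liminf_{j}\E_\phi[\tilde F_{t_j}]=0$, which is exactly what the paper does.
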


\begin{proof}
    Since the bounded gradient assumption for $f(\cdot,0)$ implies that $F_0$ has uniformly finite local volume, and since any graphical flow is proper, \Cref{cor:main_calibration_detailed} applies to $F$.
    Using first the scaling property \eqref{eq:scaling} and then the energy identity \eqref{eq:energy_identity}, we obtain
    \[
        \E_\phi[\tilde{F}_{t_j}]
        =(2t_j+1)^{-\frac{m}{2}}\E_\phi[F_{t_j}]
        \le (2t_j+1)^{-\frac{m}{2}}\E_\phi[F_0]\to 0
        \qquad (t_j\to\infty).
    \]
    Hence Fatou's lemma yields $\E_\phi[\tilde{F}_\infty]
        \le \lim_{j\to\infty}\E_\phi[\tilde{F}_{t_j}]
        =0$.
   Since $\tilde{F}_\infty$ is graphical, in particular proper, and a self-expander, we may argue as in the proof of \Cref{thm:self_expander} to conclude that $\tilde F_\infty$ is a calibrated plane. 
\end{proof}

In particular, this theorem directly applies to the graphical initial data of Ecker--Huisken \cite[Theorem 5.1]{MR1025164} for $n=m+1$ (see also Stavrou's extension \cite{MR1631112}), of Savas-Halilaj--Smoczyk \cite[Theorem 6.4]{MR4810572} for $n=m+2$ under the area-decreasing property, and of Tsai--Tsui--Wang \cite[Theorem 1.2]{MR4927791} for $n=2m$ under the Lagrangian property with two-convex potential.

In fact, our argument is also compatible with the rescaled convergence theory of Cheng--Sesum \cite{MR3900480} for nongraphical codimension-one solutions (called type III), assuming suitable local-volume bounds.

In all these cases, assuming in addition that the initial datum has finite calibration energy for some dilation-invariant coflat calibration $\phi$, \Cref{thm:rescaled_limit} implies that the resulting self-expanders are $\phi$-calibrated planes.
%On the other hand, not all graphical immortal solutions converge to self-expanders; some solutions (called type IIb) instead converge, after rescaling, to translators \cite{MR4113204,MR4442627}.

\subsection{Rigidity of translators}

Let $v \in \mathbf{S}^{n-1} \subset \R^n$.
An immersion $F\colon M^m\to\R^n$ is called a \emph{translator (in the direction $v$)} if $F_t\vcentcolon =F+tv$ defines a mean curvature flow for all $t\in\R$; equivalently,
\begin{equation}\label{eq:translator}
    v^\perp = H.
\end{equation}
Translators arise naturally as models of type II singularities and as canonical eternal solutions: their abundance and rigidity are also extensively investigated \cite{MR4281668}.

A simple way to construct a class of (stationary) translators is using cylinders. 
Let $\tilde F\colon \tilde M^{m-1}\to\R^n$ be an immersion and let $v\in \S^{n-1}$ be normal along $\tilde F$. We define \emph{the cylinder over $\tilde F$ (in the direction $v$)} by the immersion
\begin{align}
    F\colon \tilde M \times \R\to\R^n,\quad  F(\tilde p,s)\vcentcolon = \tilde F(\tilde p)+ s v.
\end{align}
In addition, we call an immersion $F\colon M^m\to\R^n$ \emph{a cylinder (in the direction $v$)} if there exists $\tilde F \colon \tilde  M^{m-1}\to\R^n$, and an orientation-preserving diffeomorphism $\Phi\colon \tilde M\times \R\to M$ such that $F\circ \Phi$ is the cylinder over $\tilde F$ (in the direction $v$).

\begin{remark}\label{remark:cylinder}
    If $F\colon M^m\to\R^n$ is the cylinder over some $\tilde F\colon \tilde M^{m-1}\to\R^n$ with $M$ connected/oriented/without boundary, then also $\tilde M$ is connected/oriented/without boundary. If $F$ is proper, then $\tilde F$ is proper.
    In addition, the tangential Gauss maps of $F$ and $\tilde F$ are related by
    \begin{align}
    \tau(\tilde p,s) = \tilde\tau(\tilde p)\wedge v\quad \text{for all }(\tilde p,s)\in \tilde M\times \R.
    \end{align}
    Hence, if $m\geq2$, $\phi$ has constant coefficients, and $F$ is calibrated by $\phi$, then $\tilde F$ is calibrated by the constant-coefficient calibration $\tilde\phi\vcentcolon=(-1)^{m-1}\iota_v\phi$.
\end{remark}

Since calibrated immersions are minimal, it is easy to construct translators using cylinders. Indeed, if $\tilde F\colon \tilde M^{m-1}\to\R^{n-1}$ is $\tilde\phi$-calibrated for some calibration $\tilde\phi\in \Omega^{m-1}(\R^{n-1})$ and we view $\R^{n-1}\cong\R^{n-1}\times\{0\}\subset\R^n$, the cylinder over $\tilde F$ (in the direction $e_n$) defines a translator (in the direction $e_n$) which is calibrated by $\phi\vcentcolon = \tilde \phi \wedge \d x_n$.

Our energy identity implies the strong rigidity that all translators with a suitable finite calibration energy are $\phi$-calibrated cylinders. 
We say that a coflat calibration $\phi$ is \emph{translation-invariant in the direction $v$} if
\begin{align}\label{eq:coflat_trans_inv}
    \text{for $\mathcal H^m$-a.e.\ $x\in\R^n$, we have $\phi_{x+sv} = \phi_x$ for all $s\in\R$}.
\end{align}
In the following we adopt the convention that a connected $0$-dimensional manifold is a single point; thus, a connected $1$-dimensional cylinder is a straight line. 

\begin{theorem}\label{thm:translator_constant_coeff}
    Let $F\colon M^m\to\R^n$ be a connected, proper oriented translator in the direction $v\in \S^{n-1}$ such that $\E_\phi[F]<\infty$ for a coflat calibration $\phi$ of degree $m$ on $\R^n$ that is translation-invariant in the direction $v$. Then $F$ is a $\phi$-calibrated cylinder in the direction $v$. 
\end{theorem}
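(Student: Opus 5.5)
Consider the translating flow $F_t\vcentcolon=F+tv$, $t\in[0,\infty)$. This is a proper oriented mean curvature flow, and since $\phi$ is translation-invariant in the direction $v$ (for $\mathcal H^m$-a.e.\ point, hence $\mu_{F_t}$-a.e.), we get $\E_\phi[F_t]=\E_\phi[F]<\infty$ for all $t$. The plan is to insert this into \Cref{thm:main_calibration_detailed} with $U=\R^n$. Then \eqref{eq:initial_trace_energy_identity} gives
\[
\E_\phi[F]+\int_0^t\int_M|H|^2\d\mathrm{vol}\d r=\E_\phi[F],
\]
so $H\equiv0$. Then \eqref{eq:translator} yields $v^\perp\equiv0$, i.e.\ $v$ is tangent to $F$ everywhere.

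For \Cref{thm:main_calibration_detailed}, the inclusion \eqref{eq:inclusion} is trivial and the oriented initial trace is simply $(\mu_F,\T_F)$. The only hypothesis needing care is \eqref{eq:local_calibration_energy_bound}, a uniform bound on local calibration energy over balls. Translation invariance gives
\[
\int_{B_1(x_0)}(1-\phi(\tau))\d\mu_t=\int_{B_1(x_0-tv)}(1-\phi(\tau))\d\mu_0\le \E_\phi[F],
\]
so the supremum is bounded by the total energy, which is finite. No local volume bound is needed, just as in \Cref{thm:self_expander}, where self-similarity replaced it.

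Next, the geometric structure. The tangent vector field $X\vcentcolon=v^\top$ on $M$, defined by $\d F(X)=v$, is a smooth vector field with $|X|=1$. Since $F$ is proper and $F(\Phi_s(p))=F(p)+sv$ along integral curves, the curves cannot escape in finite time, so $X$ is complete. The flow $\Phi_s$ of $X$ then satisfies $F\circ\Phi_s=F+sv$. Minimality is not even needed for this step; tangency of $v$ suffices. I expect the main obstacle to be producing the cylinder structure $\tilde M\times\R\cong M$:
\begin{itemize}
\item First show that the $\R$-action is free: a point fixed by $\Phi_s$ with $s\neq0$ would give $F(p)=F(p)+sv$, which is impossible.
\item Then show that it is proper: this follows from properness of $F$ together with $|F(\Phi_s(p))-F(p)|=|s|$.
\item Hence $\tilde M\vcentcolon=M/\R$ is a smooth $(m-1)$-manifold.
\end{itemize}
Rather than relying on abstract quotient theory, I would realize $\tilde M$ concretely as the slice $\tilde M=\{p: \langle F(p),v\rangle=0\}$. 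The function $\langle F,v\rangle$ has derivative $X(\langle F,v\rangle)=1$, so $0$ is a regular value. Each orbit meets this level exactly once, because $\langle F,v\rangle$ increases with unit speed along orbits. The map $\tilde M\times\R\to M$, $(\tilde p,s)\mapsto\Phi_s(\tilde p)$, is then a diffeomorphism. Taking $\tilde F=F|_{\tilde M}$, normal to $v$, exhibits $F$ as a cylinder. Orient $\tilde M$ so that the diffeomorphism is orientation preserving, and note that $\tilde M$ is connected by \Cref{remark:cylinder}. In dimension $m=1$, $\tilde M$ is a point and $F$ is a line.

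Finally, since $\E_\phi[F]=\E_\phi[F_t]=0$, the immersion $F$ is calibrated by $\phi$, which completes the proof.
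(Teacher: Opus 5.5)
Your reduction to $H\equiv 0$ (including the check of \eqref{eq:local_calibration_energy_bound} via $\E_\phi[F_t]=\E_\phi[F]$) and your construction of the splitting $\tilde M\times\R\cong M$ follow the paper's argument. The splitting uses the flow of $X=v^\top$ and the slice $\{\langle F,v\rangle=0\}$, and you even supply the completeness and diffeomorphism details that the paper only asserts.

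The gap is your last sentence: nothing you have shown gives $\E_\phi[F]=\E_\phi[F_t]=0$. Combining the energy identity with $\E_\phi[F_t]=\E_\phi[F]$ yields only $\int_0^t\int_M|H|^2\,\d\mathrm{vol}\,\d r=0$. In the self-expander case, the scaling $\E_\phi[F_t]=(2t+1)^{m/2}\E_\phi[F]$ together with monotonicity forces the energy to vanish. Translation leaves the energy unchanged, so here the identity says nothing about the value of $\E_\phi[F]$. As written, your argument shows only that $F$ is a minimal cylinder in the direction $v$, not that it is $\phi$-calibrated.

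The missing step combines the cylinder structure with finiteness of the energy, which is how the paper concludes.
\begin{itemize}
\item Since $F\circ\Phi(\tilde p,s)=\tilde F(\tilde p)+sv$ with $v$ orthogonal to $\d\tilde F$, the induced volume is the product $\mathrm{vol}_{\tilde F}\otimes\d s$.
\item The Gauss map $\tau\circ\Phi=\tilde\tau\wedge v$ does not depend on $s$.
\item By translation invariance of $\phi$ (used at $\mu_F$-a.e.\ point, since $\mu_F\ll\mathcal H^m$ by the area formula), the density $g(\tilde p,s)\vcentcolon=1-\phi_{\tilde F(\tilde p)+sv}(\tilde\tau(\tilde p)\wedge v)$ is a.e.\ constant in $s$ for a.e.\ $\tilde p$.
\item Fubini gives $\E_\phi[F]=\int_{\tilde M}\int_\R g\,\d s\,\d\mathrm{vol}_{\tilde F}$. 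Each inner integral is either $0$ or $\infty$, so finiteness forces $g=0$ a.e., i.e.\ $\phi(\tau)=1$ a.e.
\end{itemize}
This is where the hypothesis $\E_\phi[F]<\infty$ must be used a second time, beyond verifying the local energy bound.
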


\begin{proof}
    Let $F_t\vcentcolon =F+tv$.
    As in the self-expander case, we can apply \Cref{thm:main_calibration_detailed} to deduce \eqref{eq:energy_identity}.
    Using \eqref{eq:coflat_trans_inv}, we find that $\E_{\phi}[F_t]=\E_{\phi}[F]$ and hence $H\equiv0$ by \eqref{eq:energy_identity}.

    % If $m=1$, then $F$ is a straight line parallel to $v$.
    % By translation-invariance, the nonnegative integrand $1-\phi(\tau)$ is a.e.\ constant along the line. Thus, we either have $\E_\phi[F]=0$ or $\E_\phi[F]=\infty$, so necessarily $\E_\phi[F]=0$ and this completes the case $m=1$.
    
    % Assume from now on that $m\ge2$.
    By \eqref{eq:translator} with $H\equiv0$, we obtain $v^\perp\equiv0$. Consequently, there exists a nowhere vanishing vector field $X$ on $M$ such that $F_* X = v$. After translating $F$, we may assume $\tilde M \vcentcolon= \{ p\in M\mid \langle F(p), v\rangle = 0 \}\neq\emptyset$ and, by the submersion theorem, it is an $(m-1)$-dimensional submanifold (a point if $m=1$). It can be shown that the vector field $X$ generates a global flow and yields an orientation-preserving diffeomorphism $\Phi\colon \tilde M\times \R\to M$. Defining $\tilde F=F\vert_{\tilde M}$, by construction, we deduce that $F\circ \Phi$ is the cylinder over $\tilde F$ in the direction $v$.

    Recall from \Cref{remark:cylinder} that the tangential Gauss maps satisfy $\tau\circ\Phi=\tilde\tau\wedge v$ (where $\tilde\tau$ maps to $\Lambda^0\R^n = \{1\}$ when $m=1$). Hence, 
     \[
    \E_\phi[F]
        = \int_{\R}\int_{\tilde M}
        \bigl(1-\phi_{\tilde F+s v}(\tilde\tau\wedge v)\bigr)\d\mathrm{vol}_{\tilde F}\d s
        <\infty.
    \]
    By \eqref{eq:coflat_trans_inv}, the integrand is independent of $s$ and hence we must have 
    \begin{align}
        \phi_{F}(\tau)\circ\Phi=\phi_{\tilde F+\cdot v}(\tilde\tau\wedge v)=\phi_{\tilde F}(\tilde\tau\wedge v)\equiv1 \qquad \text{a.e. in $\tilde M\times \R$},
    \end{align}
    where we again used \eqref{eq:coflat_trans_inv} in the penultimate step.
\end{proof}

In \Cref{thm:translator_constant_coeff}, if in addition $\phi$ has constant coefficients and
\[
    G_v(\phi)\vcentcolon =\{\xi\in G(\phi)\mid \xi\wedge v=0 \}
\]
is finite, then $F$ is a plane.
Indeed, in this case $\tau(M)\subset G_v(\phi)$ is connected, and thus consists of a single point.
In particular, we obtain the following ($v$-independent) uniqueness theorem.

\begin{corollary}\label{cor:translator_plane}
    Let $F\colon M^m\to\R^n$ be a connected, proper oriented translator with $\E_\phi[F]<\infty$ for a constant-coefficient calibration $\phi\in\Omega^m(\R^n)$.
    \begin{enumerate}
        \item If $G(\phi)$ is finite, then $F$ is a plane.
        \item If $m\in\{1,2,n-1\}$, then $F$ is a plane.
    \end{enumerate}
\end{corollary}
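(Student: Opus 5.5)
The plan is to reduce everything to \Cref{thm:translator_constant_coeff} together with the remark following it. A constant-coefficient calibration is trivially translation-invariant in every direction $v$. So, whatever the direction $v\in\S^{n-1}$ of the translator, \Cref{thm:translator_constant_coeff} shows that $F$ is a $\phi$-calibrated cylinder in the direction $v$. In particular $\tau(M)\subset G(\phi)$, and $\tau\wedge v=0$ on $M$ because $v$ is tangent everywhere. Hence $\tau(M)\subset G_v(\phi)$. Since $M$ is connected and $\tau$ is continuous, $\tau(M)$ is a connected subset of $G_v(\phi)$.

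For (i), $G_v(\phi)\subset G(\phi)$ is finite, so the connected set $\tau(M)$ is a single oriented plane $E$. A connected immersion with constant tangent plane $E$ has $A\equiv0$, so its image lies in an affine copy of $E$. Properness and completeness (proper implies complete) then give that $F$ covers the affine plane. Simple connectivity of the plane makes the covering a diffeomorphism, so $F$ is a plane.

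For (ii), I would show that $G_v(\phi)$ is finite, or directly that $\tau$ is constant, in each case.

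\textbf{Case $m=1$.} Here $\tau\wedge v=0$ with $|\tau|=1$ forces $\tau=\pm v$, a finite set. Alternatively, connected one-dimensional cylinders are straight lines by convention.

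\textbf{Case $m=n-1$.} A constant-coefficient calibration of degree $n-1$ is, by duality, $\iota_N(\d x_1\wedge\cdots\wedge\d x_n)$ for a unit vector $N$. Its calibrated set $G(\phi)$ is the single oriented hyperplane $N^\perp$, which is finite, so (i) applies.

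\textbf{Case $m=2$.} Write $\tau=\tilde\tau\wedge v$ with $\tilde\tau$ a unit vector orthogonal to $v$, as in \Cref{remark:cylinder}. Then $\tilde F$ is a curve calibrated by the constant $1$-form $\tilde\phi=-\iota_v\phi$, which has comass at most one. For a constant $1$-form $\langle\cdot,w\rangle$ with $|w|\le1$, the calibrated unit vectors form at most one point, namely $w$ itself if $|w|=1$. Hence $\tilde\tau$ is constant, $\tau$ is constant, and the previous argument gives that $F$ is a plane.

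The only subtle step is justifying that the tangent plane of a connected cylinder in $G_v(\phi)$ is constant. This is exactly the connectedness argument, which relies on $\tau\colon M\to\mathbf{Gr}_m^+(\R^n)$ being continuous. Beyond that, it remains to check in the case $m=2$ that $\iota_v\phi$ calibrates $\tilde\tau$ with the correct sign, which \Cref{remark:cylinder} provides.
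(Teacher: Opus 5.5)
Your proposal is correct and follows essentially the same route as the paper. You apply \Cref{thm:translator_constant_coeff} to get $\tau(M)\subset G_v(\phi)$ and use connectedness when $G(\phi)$ is finite; for $m\in\{1,n-1\}$ you note that $G(\phi)$ is a singleton; and for $m=2$ you use that the constant $1$-form $-\iota_v\phi$ calibrating the cross-section is unidirectional. Your extra details, namely the covering argument that a connected proper immersion with constant $\tau$ is a plane and the sign check $-\iota_v\phi(\tilde\tau)=1$, are consistent with \Cref{remark:cylinder} and fill in steps the paper leaves implicit.
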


\begin{proof}
    The first assertion follows from the preceding paragraph.
    The cases $m\in\{1,n-1\}$ are also immediate consequences of \Cref{thm:translator_constant_coeff}, since $\phi$ is necessarily unidirectional (cf.\ the beginning of \Cref{subsec:constant_coeff}), where $G(\phi)$ is a singleton.

    Suppose $m=2$.
    Then \Cref{thm:translator_constant_coeff} yields that (after applying a diffeomorphism)
    $M^2 = \tilde M^1\times\R$ with $F(\tilde p,s)=\tilde F(\tilde p)+sv$.
    Since the constant-coefficient calibration $\tilde \phi = - \iota_v\phi\in\Omega^1(\R^n)$ calibrating $\tilde F$ must be unidirectional, the immersion $\tilde F$ of $\tilde M$ must also be a straight line.
    Hence $F$ is a plane.
\end{proof}

\begin{remark}
    The splitting conclusion in \Cref{thm:translator_constant_coeff} is optimal in general: there exist nonplanar translators with zero calibration energy.
    Let $m=3$ and $n=5$, and identify $\R^5=\C^2\times\R$ with $v\vcentcolon =e_5$.
    Let $\omega$ be the K\"ahler form on $\C^2$ (cf.\ \Cref{subsec:kaehler}), and define
    \[
    \phi\vcentcolon =\d x_5\wedge\omega\in\Omega^3(\R^5).
    \]
    One readily checks that $\phi$ is a constant-coefficient calibration.
    % Indeed, for any $\xi\in\mathbf{Gr}_3^+(\R^5)$, we may
    % represent $\xi=v_1\wedge v_2\wedge v_3$ with orthonormal vectors $v_1,v_2,v_3$ such that $v_2,v_3\in \C^2$.
    % Since $\omega$ has comass $1$ on $\C^2$, we obtain $\phi(\xi)=\d x_5(v_1)\omega(v_2,v_3)= \langle e_5,v_1\rangle\omega(v_2,v_3) \le 1$.
    % Equality is attained whenever $v_1=e_5$ and $v_2\wedge v_3\in G(\omega)$, so $\phi$ is a
    % calibration.
    Moreover,
    \[
        G_v(\phi)=\{e_5\wedge\eta\mid \eta\in G(\omega)\} \cong \{\text{complex lines in $\C^2$}\}.
    \]
    In particular, if we consider $F\colon \C\times\R\to\C^2\times\R\cong\R^5$ of the form
    \[
    F(z,s)\vcentcolon =(\tilde{F}(z),s), \qquad \tilde{F}(z)\vcentcolon =(z,f(z)), \qquad f\colon\C\to\C\ \text{holomorphic},
    \]
    then, choosing the appropriate product orientation, we have $\tau=e_5\wedge\tilde\tau$ and $\phi(\tau)=\omega(\tilde\tau)=1$.
    Thus $F$ is calibrated by $\phi$, 
    and thus $H\equiv0$.
    Since $e_5$ is tangent everywhere, we also have $e_5^\perp\equiv0$, and hence
    $F$ is a translator in the direction $e_5$.
\end{remark}

\begin{remark}\label{rem:block_example}
    There is a nontrivial family of $\phi$ for which $G(\phi)$ is finite.
    Trivial examples include the unidirectional case in general (co)dimensions, where $G(\phi)$ is a singleton.
    In addition, for $m\geq3$ and $N\geq2$, if we consider
    \[
    \R^{mN}=P_1\oplus\dots\oplus P_N
    \]
    with mutually orthogonal $m$-planes $P_1,\dots,P_N$ in $\R^{mN}$, and take the volume form $\d\mathrm{vol}_j$ on $P_j$, then the $m$-form
    \[
    \phi_N\vcentcolon =\d\mathrm{vol}_1+\dots+\d\mathrm{vol}_N
    \]
    defines a calibration such that $G(\phi_N)$ is a discrete set with exactly $N$ elements:
    \[
    G(\phi_N)=\{P_1,\dots,P_N\}.
    \]
    Indeed, for any oriented orthonormal $m$-frame $v_1,\dots,v_m\in\R^{mN}$, if $\ell_{ij}\in[0,1]$ denotes the length of $v_i$ projected to $P_j$, then Hadamard's inequality yields the estimate $|\!\d\mathrm{vol}_j(v_1,\dots,v_m)|\leq\prod_{i=1}^m\ell_{ij}$
    and hence, using H\"older's inequality and an elementary norm comparison,
    \[
    |\phi_N(v_1,\dots,v_m)|\leq \sum_{j=1}^N\prod_{i=1}^m\ell_{ij} \leq \prod_{i=1}^m \Big(\sum_{j=1}^N\ell_{ij}^m\Big)^{1/m}\leq \prod_{i=1}^m \Big(\sum_{j=1}^N\ell_{ij}^2\Big)^{1/2} = 1,
    \]
    where equality holds if and only if all $v_1,\dots,v_m$ are contained in a single plane $P_j$.
\end{remark}

\begin{remark}
    In the proof of \Cref{cor:translator_plane} (i), we only used the fact that $G(\phi)$ is totally disconnected. However, in the present constant-coefficient setting, the assumption that $G(\phi)$ is finite yields no loss of generality: under the Pl\"ucker embedding, $G(\phi)$ is a real algebraic set, and therefore has finitely many connected components, cf.\ \cite[Theorem 2.4.5]{MR1659509}. In particular, total disconnectedness implies finiteness.
\end{remark}

\Cref{thm:translator_constant_coeff} and \Cref{cor:translator_plane} should be compared to Lynch--Tinaglia's recent uniqueness result for translator surfaces in $\R^3$ under asymptotic planarity \cite{MR4996046}, motivated by Khan's structure theorem \cite{MR4553955}, and to Neves--Tian's rigidity result for Lagrangian translators in $\C^2$ \cite{MR3091260}.
See also \cite{MR4145202,MR3412395,MR3895630,MR4190399,MR4052220} for various rigidity results for codimension-one translators, as well as \cite{MR4947201,MR4784721} and references therein on the abundance side.

In higher codimensions, there are several Bernstein-type theorems for translators \cite{MR3396413,MR3396441,MR4496501,MR4701858,MR4632930,MR3842856}, which typically require global geometric assumptions.
To the authors' knowledge, our result is the first rigidity theorem in arbitrary codimension whose assumptions are formulated purely in terms of the behavior of the ends.
In particular, our assumption does not impose any topological restrictions.

\subsection{No breather theorems in the nonshrinking case}

A breather is a solution which is self-similar at two distinct times: a mean curvature flow $F\colon M^m\times[t_1,t_2]\to\R^n$, where $t_1<t_2$, is a \emph{breather} if there are a constant $\lambda>0$, an isometry $\Psi\colon \R^n\to\R^n$, and a diffeomorphism $\Xi\colon M\to M$ such that
\begin{equation}\label{eq:breather_def}
    F_{t_2} = \lambda \Psi \circ F_{t_1} \circ \Xi.
\end{equation}
If $\lambda=1$, $\lambda>1$, or $\lambda<1$, then the breather is said to be \emph{steady}, \emph{expanding}, or \emph{shrinking}, respectively.

No breather theorems are rigidity results asserting that, under suitable conditions, all breathers must be self-similar solutions (or even more rigid).
Since Perelman's work for closed Ricci flow \cite{perelman2002entropy}, no breather theorems have been established in various contexts.
As for mean curvature flow, in the shrinking case we have fairly general no breather theorems thanks to Huisken's monotonicity formula, both in the compact \cite{MR2528703} and noncompact setting \cite{cheng2021no}.
On the other hand, in the nonshrinking (i.e., steady or expanding) case, a general no breather theorem is still available in the compact setting thanks to the volume-decreasing property \cite{MR2528703}. The noncompact situation is much more subtle; in fact, Topping \cite[Remark 2]{MR4568346} recently discovered a nontrivial expanding breather.
So far, the only known no breather theorem in the nonshrinking noncompact case seems to be Cheng--Zhang's result under convexity \cite{cheng2021no}.

Our energy identity yields new types of no breather theorems for noncompact mean curvature flow under finite calibration energy.
The first result concerns $\phi$-preserving breathers: for a coflat calibration $\phi$ of degree $m$ on $\R^n$, we say that a connected and oriented breather is \emph{$\phi$-preserving} if the maps $\Psi$ and $\Xi$ in \eqref{eq:breather_def} satisfy
\[
\sign(\Xi) (\lambda\Psi)^*\phi=\lambda^m\phi \qquad \text{$\mathcal{H}^m$-a.e.\ in $\R^n$},
\]
where $\sign(\Xi)=+1$ (resp.\ $-1$) if $\Xi$ is orientation-preserving (resp.\ reversing).
In particular, if $\phi$ has constant coefficients, then thanks to the scaling property, the $\phi$-preserving property is equivalent to satisfying $\sign(\Xi)\Psi^*\phi=\phi$.

\begin{theorem}\label{thm:no_breather_1}
    Let $\phi$ be a coflat calibration of degree $m$ on $\R^n$.
    Let $F\colon M^m\times[t_1,t_2]\to\R^n$ be a connected, proper, oriented, nonshrinking, $\phi$-preserving breather.
    Suppose that $F_{t_1}$ has uniformly finite local volume and finite $\phi$-calibration energy.
    Then $F_{t_1}$ is minimal (and hence $F$ is stationary up to tangential motions).
\end{theorem}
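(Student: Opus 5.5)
We apply the energy identity of \Cref{thm:main_calibration} to the breather on $[t_1,t_2]$ and compare the energies at the two endpoints using the $\phi$-preserving relation. After a time shift, $F\colon M\times[t_1,t_2]\to\R^n$ is a proper oriented mean curvature flow that is smooth up to $t_1$. By assumption $F_{t_1}$ has uniformly finite local volume and $\E_\phi[F_{t_1}]<\infty$. \Cref{thm:main_calibration} (or rather \Cref{cor:main_calibration_detailed} with $U=\R^n$, applied on $[t_1,T)$ for any $T>t_2$ after extending the flow slightly, or applied directly on $[t_1,t_2)$ and then letting $t\to t_2$ using smoothness and properness up to $t_2$) then yields
\[
\E_\phi[F_{t_2}]+\int_{t_1}^{t_2}\int_M|H|^2\d\mathrm{vol}\d t=\E_\phi[F_{t_1}].
\]
To pass to the endpoint $t_2$, I would use lower semicontinuity (Fatou) for $\E_\phi[F_t]\to\E_\phi[F_{t_2}]$ as $t\to t_2^-$. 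This gives at least the inequality "$\le$", which suffices.

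Next, compute $\E_\phi[F_{t_2}]$ from \eqref{eq:breather_def}. Volume scales as $\lambda^m$ under $\lambda\Psi$ and is invariant under reparametrization by $\Xi$, so
\[
\int_M\d\mathrm{vol}_{F_{t_2}}=\lambda^m\int_M\d\mathrm{vol}_{F_{t_1}}
\]
in the localized sense. Likewise,
\[
\int_M F_{t_2}^*\phi=\int_M\Xi^*F_{t_1}^*(\lambda\Psi)^*\phi=\sign(\Xi)\int_M F_{t_1}^*((\lambda\Psi)^*\phi)=\lambda^m\int_M F_{t_1}^*\phi,
\]
by the $\phi$-preserving hypothesis. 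The hypothesis holds $\mathcal H^m$-a.e., which is enough because $\mu_{F_{t_1}}$ is absolutely continuous with respect to $\mathcal H^m$. Since these quantities need not be separately finite, I would carry out this computation pointwise on the integrand. Concretely, $(1-\phi(\tau))\d\mathrm{vol}$ for $F_{t_2}$ equals $\Xi^*$ of $\lambda^m(1-\phi(\tau))\d\mathrm{vol}$ for $F_{t_1}$, up to the orientation sign, which is absorbed exactly as above. This gives $\E_\phi[F_{t_2}]=\lambda^m\E_\phi[F_{t_1}]$.

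Combining the two steps gives
\[
\lambda^m\E_\phi[F_{t_1}]+\int_{t_1}^{t_2}\int_M|H|^2\le\E_\phi[F_{t_1}].
\]
Since $\lambda\ge1$ and the energy is finite, the dissipation integral vanishes. Hence $H\equiv0$ for a.e. $t$, and by continuity for all $t\in[t_1,t_2]$. In particular $F_{t_1}$ is minimal and the normal velocity vanishes, so the flow is stationary up to tangential motions.

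The main obstacle is the endpoint and orientation bookkeeping. The endpoint issue is handled by Fatou. For the orientation, when $\Xi$ reverses orientation, $\tau_{F_{t_2}}\circ\Xi^{-1}$ picks up the sign of $\Xi$ relative to the pushed-forward Gauss map of $\lambda\Psi\circ F_{t_1}$. One must check that this sign combines with $\sign(\Xi)$ in the definition so that the integrand identity holds pointwise, not just for the integrals.
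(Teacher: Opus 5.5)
Your proof is correct and is essentially the paper's argument: the $\phi$-preserving scaling computation gives $\E_\phi[F_{t_2}]=\lambda^m\E_\phi[F_{t_1}]\ge\E_\phi[F_{t_1}]$, and comparing this with the energy identity of \Cref{cor:main_calibration_detailed} forces $\int_{t_1}^{t_2}\int_M|H|^2\d\mathrm{vol}\d t=0$. Your Fatou argument at the closed endpoint $t_2$ supplies a detail the paper leaves implicit, and the orientation check you flag does close up: writing $\Psi(x)=Qx+a$, one has $\tau_{F_{t_2}}=\sign(\Xi)\,Q_\#(\tau_{F_{t_1}}\circ\Xi)$, so the $\phi$-preserving hypothesis gives $\phi(\tau_{F_{t_2}})=\phi(\tau_{F_{t_1}})\circ\Xi$ almost everywhere on $M$.
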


\begin{proof}
    Since the breather is $\phi$-preserving and nonshrinking, we have
    \[
    \E_\phi[F_{t_2}]= \E_\phi[\lambda \Psi \circ F_{t_1} \circ \Xi] = \lambda^m\E_{\lambda^{-m}\sign(\Xi)(\lambda\Psi)^*\phi}[F_{t_1}]= \lambda^m\E_{\phi}[F_{t_1}] \geq \E_{\phi}[F_{t_1}].
    \]
    On the other hand, \Cref{cor:main_calibration_detailed} implies that
    \begin{equation}
        \E_\phi[F_{t_2}]\leq \E_\phi[F_{t_2}] +\int_{t_1}^{t_2} \int_M |H|^2 \d\mathrm{vol} \d t = \E_\phi[F_{t_1}].
    \end{equation}
    Hence $|H|=0$ must hold everywhere.
\end{proof}

% \begin{remark}
%     If we replace the definition of $\phi$-preservation by $\sign(\Xi) (\lambda\Psi)^*\phi=\lambda^m\phi$, then the same argument
%     works even for the variable-coefficient case.
% \end{remark}

Note that Topping's example \cite[Remark 2]{MR4568346} is $\phi$-preserving for any dilation-invariant $\phi$ since $\sign(\Xi)=+1$ and $\Psi=\mathrm{Id}$.
Hence such an example is always ruled out by assuming finite calibration energy.

The next result does not assume the $\phi$-preserving property, but instead restricts the class of calibrations.
We say that a subset $S\subset\mathbf{Gr}_m^+(\R^n)$ is \emph{exposable} if there is a constant-coefficient calibration $\psi_S\in\Omega^m(\R^n)$ such that $G(\psi_S)=S$.
In addition, we say that $S\subset\mathbf{Gr}_m^+(\R^n)$ is \emph{strongly exposable} if every nonempty subset $S'\subset S$ is exposable.
Finally, we say that $\phi$ is \emph{nondegenerate} if the following quadratic lower bound holds for some $c_\phi>0$:
\begin{equation}\label{eq:non_degenerate_quadratic}
    1-\phi(\xi)\ge c_\phi\dist(\xi,G(\phi))^2 \qquad \text{for all }\xi \in \mathbf{Gr}_m^+(\R^n),
\end{equation}
where $\dist$ denotes the extrinsic distance in $\Lambda^m\R^n \cong \R^{\binom nm}$. Such a lower bound is not automatic, see \Cref{rem:nonquadratic}; however, constant-coefficient calibrations always satisfy the converse inequality, cf.\ \Cref{prop:constant_coeff_upper_bound}.

\begin{theorem}\label{thm:no_breather_2}
    Let $F\colon M^m\times[t_1,t_2]\to\R^n$ be a connected, proper, oriented, nonshrinking breather.
    Let $\phi\in\Omega^m(\R^n)$ be a nondegenerate constant-coefficient calibration such that $G(\phi)$ is finite and strongly exposable.
    Suppose that $F_{t_1}$ has uniformly finite local volume and finite $\phi$-calibration energy.
    Then $F_{t_1}$ is minimal.  
\end{theorem}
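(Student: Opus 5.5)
The plan is to reduce to \Cref{thm:no_breather_1}. I will build, from $\phi$ and the linear part of the symmetry, a new constant-coefficient calibration $\bar\psi$ that the breather preserves and for which $F_{t_1}$ still has finite energy.

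Write $\Psi(x)=Ax+b$ with $A\in O(n)$, and let $g\vcentcolon=\sign(\Xi)A^*$ act on constant-coefficient $m$-forms, with the induced action on $\mathbf{Gr}_m^+(\R^n)$.

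\emph{Step 1: iterate the breather.} Since $\lambda\Psi\circ F_{t_1+(t-t_2)/\lambda^2}\circ\Xi$ is again a mean curvature flow starting from $F_{t_2}$, we may concatenate rescaled isometric copies of $F$. This gives a proper oriented flow on $[t_1,\infty)$ (or up to a finite time limit if $\lambda<1$, which is excluded by nonshrinking). Its $k$-th "period" end-time slice is $\lambda^k\Psi_k\circ F_{t_1}\circ\Xi^k$, where $\Psi_k$ has linear part $A^k$. By \Cref{cor:main_calibration_detailed} (local volume propagates by \Cref{lem:uniform_bounded_local_volume_new}), $\E_\phi$ is finite at every such time. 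Scaling \eqref{eq:scaling} and translation invariance of constant forms then give $\E_{g^{k}\phi}[F_{t_1}]<\infty$ for all $k\ge0$. Nondegeneracy \eqref{eq:non_degenerate_quadratic} is $O(n)$-invariant, so it yields $\int_M\dist(\tau,g^{-k}G(\phi))^2\d\mathrm{vol}<\infty$ for each $k$.

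\emph{Step 2: the invariant set.} Let $S^*\vcentcolon=\bigcap_{k\ge0}g^{-k}G(\phi)$. The sequence of partial intersections is decreasing in a finite set, so it stabilizes at some $K$, and $g$ maps $S^*$ injectively into itself; hence $g$ permutes $S^*$. Since all the sets $g^{-k}G(\phi)$ are finite, a separation argument gives a pointwise bound: if $\xi$ lies within $\delta$ of a point of each $g^{-k}G(\phi)$, $k\le K$, with $\delta$ small, then these points coincide and lie in $S^*$. Therefore
\[
\dist(\xi,S^*)^2\le C\sum_{k=0}^K\dist(\xi,g^{-k}G(\phi))^2 ,
\]
with the convention that the left side is replaced by $1$ if $S^*=\emptyset$. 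Consequently $\int_M\dist(\tau,S^*)^2\d\mathrm{vol}<\infty$. If $S^*=\emptyset$, this says $F_{t_1}$ has finite volume. By \Cref{rem:energy_decay_phi=0}, $M$ is then compact, and the classical volume argument (taking $\phi=0$ in \Cref{thm:no_breather_1}) gives minimality. So assume $S^*\neq\emptyset$.

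\emph{Step 3: an invariant calibration.} By strong exposability there is $\psi\vcentcolon=\psi_{S^*}$ with $G(\psi)=S^*$. Let $H$ be the closure of the group generated by $g$; it is a compact group and preserves $S^*$, since $g$ acts on $S^*$ as a permutation of finite order. Define $\bar\psi\vcentcolon=\int_H h\psi\,\d h$ using Haar measure. Then $\bar\psi$ is a closed constant-coefficient form of comass at most $1$ with $\bar\psi=1$ on $S^*$, so it is a calibration with $S^*\subset G(\bar\psi)$, and it satisfies $g\bar\psi=\bar\psi$, i.e.\ the breather is $\bar\psi$-preserving. By \Cref{prop:constant_coeff_upper_bound},
\[
1-\bar\psi(\xi)\le C\dist(\xi,G(\bar\psi))^2\le C\dist(\xi,S^*)^2 ,
\]
so Step 2 gives $\E_{\bar\psi}[F_{t_1}]<\infty$. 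Now \Cref{thm:no_breather_1} applies and shows that $F_{t_1}$ is minimal.

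\emph{Main obstacle.} I expect the delicate point to be Step 1: making sure the concatenated flow is a legitimate proper flow to which \Cref{cor:main_calibration_detailed} applies on each finite interval. It must also be checked that only finitely many iterates $k\le K$ are needed, which holds because the intersection stabilizes. The separation estimate in Step 2 and the averaging in Step 3 should be routine.
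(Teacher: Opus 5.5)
Your argument is correct, but it reaches a $\bar\psi$-preserving calibration by a different route than the paper.

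\textbf{The paper's route.} The paper applies the energy identity once, to a new calibration. It defines $S\subset G(\phi)$ as the set of calibrated planes near which $F_{t_1}$ carries infinite volume. Strong exposability then yields $\bar\psi$ with $G(\bar\psi)=S$ \emph{exactly}, averaged over the full signed stabilizer of $S$, and one checks that $\E_{\bar\psi}[F_{t_1}]<\infty$. Invariance $\sign(\Xi)A_\#S=S$ is proved afterwards by contradiction: if the symmetry moved a plane of $S$ off $S$, then $F_{t_2}$ would carry infinite volume in a region where $1-\bar\psi\ge c>0$, which contradicts $\E_{\bar\psi}[F_{t_2}]<\infty$. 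This step is where the paper needs $G(\bar\psi)=S$ exactly, and hence strong exposability.

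\textbf{Your route.} You transport $\phi$ along the symmetry instead. Finiteness of $\E_{g^k\phi}[F_{t_1}]$ for all $k$, together with nondegeneracy, places $\tau$ $L^2$-close to every $g^{-k}G(\phi)$. The invariance of $S^*=\bigcap_k g^{-k}G(\phi)$ is then purely set-theoretic: an injective self-map of a finite set. Averaging only over the closed cyclic group generated by $g$ suffices.

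\textbf{What each approach buys.} You only need $S^*\subset G(\bar\psi)$, so your argument does not actually use strong exposability. Taking $\psi=\phi$ already works: $H$ permutes $S^*\subset G(\phi)$, so $\int_H h\phi\,\d h$ equals $1$ on $S^*$. The price is applying the energy identity $K+1$ times and proving the separation estimate. The paper needs only one application, but relies on the stronger structural hypothesis.

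\textbf{Your ``main obstacle'' is unnecessary.} No concatenated flow is needed. For every constant-coefficient calibration $\chi$ one has $\E_{\chi}[F_{t_2}]=\lambda^m\E_{g\chi}[F_{t_1}]$. Applying \Cref{thm:main_calibration} on $[t_1,t_2]$ to the calibrations $g^k\phi$ therefore gives, by induction,
\[
\E_{g^{k+1}\phi}[F_{t_1}]=\lambda^{-m}\E_{g^k\phi}[F_{t_2}]\le\lambda^{-m}\E_{g^k\phi}[F_{t_1}]<\infty .
\]
This step does not even use the nonshrinking assumption. That assumption enters only in the final appeal to \Cref{thm:no_breather_1}.
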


\begin{proof}
    If $\mathrm{vol}(M)\vert_{t_1}<\infty$, then \Cref{rem:energy_decay_phi=0} implies that $M$ is compact, so that the desired rigidity of nonshrinking breathers immediately follows from the classical decay of the volume.
    Hence we may assume that $\mathrm{vol}(M)\vert_{t_1}=\infty$.

    By the nondegeneracy in \eqref{eq:non_degenerate_quadratic} and by
    $\E_\phi[F_{t_1}]<\infty$, we have $\dist(\tau,G(\phi)) \in L^2(M)$ at time $t_1$.
    In particular, for every $\rho>0$,
    \begin{align}\label{eq:26-05_3}
        \mathrm{vol}\bigl(\{p\in M\mid
        \dist(\tau(p,t_1),G(\phi))\ge \rho\}\bigr)<\infty .
    \end{align}

    We now define the set of the elements of $G(\phi)$ which are actually seen with
    infinite volume, that is,
    \[
        S\vcentcolon=
        \bigl\{
        \xi\in G(\phi) \bigm|
        \lim_{\rho\to0^+}
        \mathrm{vol}\bigl(\{p\in M\mid
        \dist(\tau(p,t_1),\xi)<\rho\}\bigr)=\infty
        \bigr\}.
    \]
    Since $G(\phi)$ is finite and the volume at time $t_1$ is infinite, the set
    $S$ is nonempty. By \eqref{eq:26-05_3}, the definition of $S$, and the finiteness of $G(\phi)$, for $\rho>0$ sufficiently small, we have
    \begin{equation}\label{eq:finite_complement_of_S}
        \mathrm{vol}\bigl(\{p\in M\mid
        \dist(\tau(p,t_1),S)\ge\rho\}\bigr)<\infty .
    \end{equation}

    Since $G(\phi)$ is strongly exposable, there exists a constant-coefficient
    calibration $\psi\in\Omega^m(\R^n)$ such that
    \[
        G(\psi)=S.
    \]
    We average $\psi$ over the signed stabilizer of $S$. To this end, let
    \[
        \Gamma_S\vcentcolon=
        \bigl\{
        (\sigma,R)\in\{\pm1\}\times O(n)
        \bigm| \sigma R_\# S=S
        \bigr\},
    \]
    where $R_\#$ denotes the induced action
    $v_1\wedge\dots\wedge v_m\mapsto Rv_1\wedge\dots\wedge Rv_m$.
    The group $\Gamma_S$ is compact and Hausdorff. 
    Define
    \[
        \bar\psi\vcentcolon=
        \int_{\Gamma_S}\sigma R^*\psi\,\d\nu(\sigma,R),
    \]
    where $\nu$ is the unique Haar measure on $\Gamma_S$ normalized so that $\nu(\Gamma_S)=1$.
    
    We check that $\bar\psi$ is a constant-coefficient calibration satisfying
    \begin{equation}\label{eq:selector_invariant}
        G(\bar\psi)=S, \qquad
        \sigma R^*\bar\psi=\bar\psi
        \quad\text{for all }(\sigma,R)\in\Gamma_S.
    \end{equation}
    Indeed, each form $\sigma R^*\psi$ has constant coefficients and comass $1$, so $\bar\psi$ has constant coefficients and comass at most $1$.
    If $\xi\in S$, then $\sigma R_\#\xi\in S$ for every $(\sigma,R)\in\Gamma_S$, and hence $(\sigma R^*\psi)(\xi)=\psi(\sigma R_\#\xi)=1$.
    Thus $\bar\psi(\xi)=1$ for all $\xi\in S$; in particular, $\bar\psi$ has comass $1$ and $S\subset G(\bar\psi)$.
    Conversely, suppose that $\bar\psi(\xi)=1$ for some $\xi\in\mathbf{Gr}_m^+(\R^n)$.
    Then the function $(\sigma,R)\mapsto (\sigma R^*\psi)(\xi)$ is continuous on $\Gamma_S$ and everywhere bounded above by $1$.
    Since its average on $\Gamma_S$ is $1$, the integrand is identically equal to $1$ on $\Gamma_S$.
    Evaluating at the identity element gives $\psi(\xi)=1$, i.e., $\xi\in G(\psi)=S$. 
    Therefore $G(\bar\psi)=S$.
    Finally, the identity $\sigma R^*\bar\psi=\bar\psi$ follows from the invariance of the Haar measure $\nu$.

    We next prove that $F_{t_1}$ has finite $\bar\psi$-calibration energy.
    Choosing $\rho>0$ as in \eqref{eq:finite_complement_of_S}, and smaller if necessary so that the $\rho$-neighborhoods of the elements of $G(\phi)$ are pairwise disjoint, we have
    $\dist(\xi,S)=\dist(\xi,G(\phi))$ whenever $\dist(\xi,S)<\rho$.
    Therefore, using the elementary estimate $1-\bar{\psi}(\xi)\le C_{\bar{\psi}}\dist(\xi,G(\bar{\psi}))^2$ (which will be shown in \Cref{prop:constant_coeff_upper_bound}), we obtain
    \begin{align}
        \E_{\bar\psi}[F_{t_1}]
        &= \int_{[\dist(\tau,S)<\rho]}(1-\bar\psi(\tau))\d\mathrm{vol}\big|_{t_1} + \int_{[\dist(\tau,S)\ge\rho]}(1-\bar\psi(\tau))\d\mathrm{vol}\big|_{t_1} \\
        &\le C_{\bar\psi}
        \int_M\dist(\tau,G(\phi))^2\d\mathrm{vol}\big|_{t_1} + 2\,\mathrm{vol}\bigl(\{p\in M\mid
        \dist(\tau(p,t_1),S)\ge\rho\}\bigr)\\
        &< \infty .
    \end{align}
    Thus \Cref{thm:main_calibration} applies to the calibration $\bar\psi$, and
    in particular $\E_{\bar\psi}[F_{t_2}]<\infty$.

    Write $\Psi(x)=Qx+a$ with $Q\in O(n)$, $a\in\R^n$.
    We claim that
    \begin{equation}\label{eq:S_preserved}
        \sign(\Xi) Q_\#S=S.
    \end{equation}
    Let $\xi_0\in S$. Suppose, by contradiction, that $\sign(\Xi) Q_\#\xi_0\notin S$.
    Then we can choose $r>0$ so small that the $r$-neighborhood $U_r$ of $\sign(\Xi) Q_\#\xi_0$ in $\mathbf{Gr}_m^+(\R^n)$ has positive distance from $S=G(\bar\psi)$; therefore, there is $c>0$ such that
    \begin{equation}
        1-\bar\psi(\xi) \geq c \qquad \text{for all }\xi \in U_r.
    \end{equation}
    On the other hand, since $\xi_0\in S$, reducing $r>0$ if necessary, the set
    \[
        W_r = W_{r,t}\vcentcolon=
        \bigl\{p\in M\mid \dist(\tau(p,t),\xi_0)<r\bigr\}
    \]
    has infinite volume at time $t=t_1$. 
    By the breather relation, we have
    \[
    \mathrm{vol}(\Xi^{-1}(W_r))|_{t_2}=\lambda^m \mathrm{vol}(W_r)|_{t_1}=\infty,
    \]
    and for any $q\in \Xi^{-1}(W_r)|_{t_2}$ the plane $\tau(q,t_2)$ is contained in $U_r$.
    Thus we obtain
    \[
        \E_{\bar\psi}[F_{t_2}]
        \ge
        \int_{\Xi^{-1}(W_r)}
        (1-\bar\psi(\tau))\d\mathrm{vol}\big|_{t_2}
        \ge
        c \, \mathrm{vol}\bigl(\Xi^{-1}(W_r)\bigr)\big|_{t_2}
        =
        \infty,
    \]
    a contradiction. 
    Hence $\sign(\Xi) Q_\#\xi_0\in S$ for every $\xi_0\in S$.
    Since $S$ is finite and the signed action $\xi\mapsto\sign(\Xi) Q_\#\xi$ is injective, equation \eqref{eq:S_preserved} follows.

    By the invariances in \eqref{eq:S_preserved} and \eqref{eq:selector_invariant}, we have $\sign(\Xi)\Psi^*\bar\psi=\bar\psi$, i.e., the breather $F$ is $\bar\psi$-preserving.
    The assertion then follows by applying \Cref{thm:no_breather_1}.
\end{proof}

The assumptions of \Cref{thm:no_breather_2} are satisfied, for instance, in the unidirectional case.
More nontrivial examples include the block examples as in \Cref{rem:block_example}, and also the double-point calibrations in \cite{MR726326}.

\subsection{Convergence of two-dimensional immortal solutions}
   
Finally we study the long-time behavior of solutions with finite calibration energy, assuming that the flow is immortal, i.e., $T=\infty$.
Even under the immortality assumption, the long-time behavior can be quite subtle; for example, there exist codimension-one immortal solutions given by entire convex graphs for which the maximal mean curvature diverges as $t\to\infty$ \cite{MR4113204,MR4442627}.

In contrast to such wild behavior, the results in this section illustrate the regularizing effect of the calibration energy.
Before we state and prove our convergence results, we review the following consequences of the fundamental work of M\"uller--\v{S}ver\'ak \cite{MR1366547}, including the Gauss--Bonnet formula; see \Cref{lem:MS_GB} below.
For a proper immersion $F\colon M\to\R^n$ of a surface $M$, we write
\begin{align}
    \Theta(F)\vcentcolon = \lim_{r\to\infty}\frac{\mu(B_r(0))}{\pi r^2}
\end{align}
provided the limit exists. This $\Theta$ is called the asymptotic area ratio; geometrically, it counts the total number of ends, counted with their multiplicities. Let $K$ denote the Gaussian curvature, and $\chi(M)$ denote the Euler characteristic of $M$.

\begin{lemma}\label{lem:MS_GB}
    Let $F\colon M^2\to\R^n$ be a complete, connected, and oriented immersion such that $\int_M|A|^2\d\mathrm{vol}<\infty$. Then $F$ is proper, $\Theta(F)\in\N_0$, and
    \begin{equation}
        \int_{M} K\d\mathrm{vol} = 2\pi\big( \chi(M)-\Theta(F)\big).
    \end{equation}
    Moreover, if $M$ is noncompact, then $\int_M K\d\mathrm{vol} \leq 0$, where equality holds if and only if $M\cong \R^2$ and $\Theta(F)=1$.
\end{lemma}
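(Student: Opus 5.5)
This lemma is essentially a repackaging of Müller--Šverák \cite{MR1366547}, so the plan is to quote their structure theorem and then read off the consequences. For a complete oriented immersed surface $F\colon M^2\to\R^n$ with $\int_M|A|^2\d\mathrm{vol}<\infty$, Müller--Šverák prove the following:
\begin{enumerate}
    \item $M$ is conformally a compact Riemann surface $\bar M$ with finitely many punctures $p_1,\dots,p_k$;
    \item near each puncture the conformal factor $e^{u}$ satisfies $u(z)=(\theta_j-1)\log|z|^{-1}\cdot(\ldots)$ up to bounded terms, with integer multiplicities $\theta_j\in\N$;
    \item the area ratio satisfies $\mu(B_r(0))/(\pi r^2)\to\sum_j\theta_j$;
    \item the Gauss--Bonnet formula $\int_M K\d\mathrm{vol}=2\pi(\chi(M)-\sum_j\theta_j)$ holds.
\end{enumerate}
Each end is then a proper map near the puncture, since $|F(z)|\to\infty$ as $z\to p_j$. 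This gives properness of $F$, $\Theta(F)=\sum_j\theta_j\in\N_0$, and the displayed identity. The compact case is the degenerate case $k=0$: there $\Theta=0$ and the identity reduces to classical Gauss--Bonnet.

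For the last assertion, suppose $M$ is noncompact, so $k\ge1$. Write $\chi(M)=2-2g-k$ and use $\theta_j\ge1$ to get
\[
\chi(M)-\Theta(F)=2-2g-k-\sum_j\theta_j\le 2-2g-2k\le 0 .
\]
Equality forces $g=0$, $k=1$ and $\theta_1=1$. These say that $M$ is a sphere with one puncture, conformally $\C$ and hence diffeomorphic to $\R^2$, and that $\Theta(F)=1$. Conversely, if $M\cong\R^2$ and $\Theta=1$, then $\chi=1$ and the integral vanishes.

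The main obstacle is checking that the Müller--Šverák hypotheses exactly match ours. Their theorem is for complete immersions with square-integrable second fundamental form in arbitrary codimension, which is our setting. Their statement of properness and of the end multiplicities, however, is phrased via the conformal parametrization and the density at infinity. I would therefore verify that their $\theta_j$ is the same quantity as our $\Theta$ defined with $\mu=F_\#\mathrm{vol}$ counted with multiplicity, using their asymptotic expansion $F(z)\sim c\,z^{-\theta_j}$ on each end.
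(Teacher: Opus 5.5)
Your overall structure matches the paper's: quote M\"uller--\v{S}ver\'ak for properness and for the Gauss--Bonnet formula, then do the topological bookkeeping. Your bookkeeping in the noncompact case ($\chi(M)=2-2g-k$, $\theta_j\ge1$, equality iff $g=0$, $k=1$, $\theta_1=1$) is correct. The gap is the one step that is not a pure citation, namely $\Theta(F)=\sum_j\theta_j$. The extrinsic area ratio in your item (iii) is exactly what has to be proved (you flag it yourself), and the route you propose for checking it fails, for two reasons.

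First, an expansion $F(z)\sim c\,z^{-\theta_j}$ with a fixed coefficient is not available under $\int_M|A|^2\d\mathrm{vol}<\infty$. Take the embedded plane $r\bigl(\cos\vartheta\,e_1+\sin\vartheta(\cos s\,e_2+\sin s\,e_3)\bigr)$, with $s=\log\log r$ for large $r$ and $s$ constant near $r=0$. It has $|A|\sim(r\log r)^{-1}$, hence finite total curvature. Yet its tangent planes rotate forever, so its blow-downs do not converge to a single plane.

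Second, even a correct asymptotic $|F(z)|\approx C|z|^{-\theta_j}$ does not give the area of $F^{-1}(B_r(0))$. For that you need the conformal factor with its exact limiting constant, not merely ``up to bounded terms'' as in your item (ii). Matching that constant with $C$ amounts to knowing that radial curves in the end are asymptotically straight, and that is the real content you are missing.

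The missing idea is to pass through intrinsic balls, as the paper does. M\"uller--\v{S}ver\'ak's Corollary~4.2.5 gives $\mathrm{dist}_M(p_0,p)/|F(p_0)-F(p)|\to1$ as $\mathrm{dist}_M(p_0,p)\to\infty$; this is also the clean source of properness. The proof of that corollary gives $\mathrm{vol}(B^M_r(p_0))/r^2\to\pi\sum_j\theta_j$. Since $F$ is $1$-Lipschitz for $\mathrm{dist}_M$, we have $B^M_r(p_0)\subset F^{-1}(B_r(F(p_0)))$. The distance ratio gives $F^{-1}(B_r(F(p_0)))\subset B^M_{(1+\varepsilon)r}(p_0)$ for $r$ large. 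Squeezing yields $\mu(B_r(F(p_0)))/(\pi r^2)\to\sum_j\theta_j$, and moving the center from $F(p_0)$ to $0$ does not change the limit. Hence $\Theta(F)=\sum_j\theta_j\in\N_0$, and with this in place the rest of your argument goes through unchanged.
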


\begin{proof}
    Fix $p_0\in M$. 
    By \cite[Corollary 4.2.5]{MR1366547}, we have
    \begin{align}\label{eq:MS_425}
        \lim_{\mathrm{dist}_M(p_0,p)\to\infty} \frac{\mathrm{dist}_M(p_0,p)}{|F(p_0)-F(p)|}=1.
    \end{align}
    The properness of $F$ follows directly. 
    By 
    \cite[Corollary 4.2.5]{MR1366547}, we have the Gauss--Bonnet formula
    \begin{align}\label{eq:Shiohama_GB}
        \int_{M}K\d\mathrm{vol} = 2\pi (\chi(M)-m_*),
    \end{align}    
    where $m_* = \sum_{i=1}^q m_i$, and $m_i$ is the multiplicity of the $i$-th end. 
    Let $B_r^M(p_0)\subset M$ be the geodesic ball of radius $r>0$. 
    The proof of \cite[Corollary 4.2.5]{MR1366547} yields the identity for the intrinsic asymptotic area ratio
    \begin{align}
    \pi m_* = \lim_{r\to\infty} \frac{\mathrm{vol}(B_r^M(p_0))}{r^2}.    
    \end{align}
    Since $F$ is an isometry, for any $r>0$, we have $
        B_r^{M}(p_0)\subset F^{-1} (B_r(F(p_0))).$
    Moreover, given $\varepsilon>0$, \eqref{eq:MS_425} implies
    that $F^{-1}(B_r(F(p_0))) \subset B^{M}_{(1+\varepsilon)r}(p_0)$ for $r\geq r(\varepsilon,p_0)>0$ sufficiently large,
    so that
    \begin{align}
        \lim_{r\to\infty} \frac{\mathrm{vol}(B_r^M(p_0))}{r^2} &\leq \liminf_{r\to\infty} \frac{\mu(B_r(F(p_0)))}{r^2}\\
        &\leq \limsup_{r\to\infty} \frac{\mu(B_r(F(p_0)))}{r^2} \leq  (1+\varepsilon)^2  \lim_{r\to\infty} \frac{\mathrm{vol}(B^M_{(1+\varepsilon)r} (p_0))}{(1+\varepsilon)^2r^2}.
    \end{align}
    By the arbitrariness of $\varepsilon>0$, we conclude that
    \begin{align}
        \lim_{r\to\infty} \frac{\mathrm{vol}(B_r^M(p_0))}{r^2} =\lim_{r\to\infty} \frac{\mu(B_r(F(p_0)))}{r^2} =\pi \Theta(F).
    \end{align}
    Since $M$ is noncompact, we have $\chi(M)\leq 1$ and $m_*\geq 1$. Hence, $\int_M K \d\mathrm{vol}\leq 0$ with equality if and only if $\chi(M)=1$, so $M\cong \R^2$, and $m_*=\Theta(F)=1$.
\end{proof}

Now we state our main theorem in this section, which gives a general convergence theorem in the two-dimensional genus-zero case under finite total curvature and unit asymptotic area ratio.

\begin{theorem}\label{thm:2d-conv}
    Let $M^2\cong\R^2$ and let $F\colon M\times [0,\infty)\to\R^n$ be an  immortal oriented mean curvature flow with purely normal velocity $\partial_t F=H$ such that $\sup_{M\times [0,T]}|A|<\infty$ for all $T<\infty$. 
    Let $\phi$ be a coflat calibration of degree $2$ on $\R^n$.
    Suppose that the initial datum $F_0\colon M\to\R^n$ is complete and satisfies 
    \begin{equation}\label{eq:02-03-26_3}
        \E_\phi[F_0]<\infty, \quad \int_M|A|^2 \d\mathrm{vol} <\infty, \quad \Theta(F_0) = 1.
    \end{equation}
    Then $F$ is asymptotically planar in the sense that 
    \begin{align}\label{eq:02-03-26_2}
        \lim_{t\to\infty}\Vert \nabla^\ell A\Vert_{\infty}=0 \text{ for all } \ell\in\N_0.
    \end{align}
    
    Moreover, if $\phi$ has constant coefficients, then
    \begin{align}\label{eq:26-05_01}
    \lim_{t\to\infty}\Vert 1-\phi(\tau)\Vert_\infty = 0.    
    \end{align}
    If in addition $G(\phi) = \{E\}$, then $
        \lim_{t\to\infty}\Vert \tau-E\Vert_\infty = 0
    $
    and $F_t$ locally smoothly converges, after translation and reparametrization, to the plane $E$ as $t\to\infty$.
\end{theorem}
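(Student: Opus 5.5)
The plan is to convert the energy identity into a spacetime $L^2$ bound on the full second fundamental form, and then upgrade it to uniform decay using two-dimensional small-energy regularity. First I check the hypotheses of \Cref{thm:main_calibration} for the flow. By \Cref{lem:MS_GB}, $F_0$ is proper. To get uniformly finite local volume, I would use Simon's monotonicity inequality for surfaces with $\int|H|^2<\infty$. Since the asymptotic area ratio about any centre equals $\Theta(F_0)=1$, this gives
\[
\rho^{-2}\mu_0(B_\rho(x_0))\le C\Big(1+\int_M|H|^2\d\mathrm{vol}\Big)
\]
uniformly in $x_0$ and $\rho$, and in particular \eqref{eq:intro_M_finite}. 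The speed is $|H|\le\sqrt2\sup|A|$, which is bounded on $[0,T]$. Hence $F_t$ stays at bounded distance from $F_0$ on finite time intervals, so the flow is proper. The same observation gives $\Theta(F_t)=1$ for all $t$, since bounded displacement does not change area ratios at infinity. \Cref{thm:main_calibration} then yields $\int_0^\infty\int_M|H|^2\d\mathrm{vol}\d t\le\E_\phi[F_0]<\infty$.

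Next, I would show $\int_M|A|^2<\infty$ for each $t>0$. Differentiating $\int|A|^2\eta$ with cutoffs gives an inequality of the form $\partial_t\int|A|^2\eta\le C\sup|A|^2\int|A|^2\eta+\text{cutoff terms}$, and Gr\"onwall on $[0,T]$ closes it. With this, \Cref{lem:MS_GB} applies at each time: $F_t$ is complete (its metric is bi-Lipschitz to $g_{F_0}$ on finite intervals), $M\cong\R^2$ and $\Theta(F_t)=1$, so $\int_MK\d\mathrm{vol}=0$. The Gauss equation $2K=|H|^2-|A|^2$ then gives $\int_M|A|^2=\int_M|H|^2$ at every time, so $\int_0^\infty\int_M|A|^2\d\mathrm{vol}\d t<\infty$. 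In particular $\int_M|A|^2\to0$ along a sequence of times, and $\int_t^{t+1}\int_M|A|^2\to0$ as $t\to\infty$.

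The main obstacle is passing from this integral smallness to the uniform decay \eqref{eq:02-03-26_2}; $L^2$ of $A$ is scale-critical in dimension two. I would first try a localized $\varepsilon$-regularity estimate for mean curvature flow of surfaces, in the spirit of Ecker--Huisken interior estimates or Kuwert--Sch\"atzle-type localized curvature estimates. The statement I want: if $\sup_{x_0}\int_{t-1}^{t}\int_{B_1(x_0)}|A|^2\d\mu_r\d r<\varepsilon$ together with the local area bound from Step 1, then $|A|\le C\varepsilon^{1/2}$ at time $t$. Higher derivatives would follow from Shi-type interior estimates on unit time intervals, giving $\|\nabla^\ell A\|_\infty\to0$. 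As a backup, I would combine the evolution inequality $\partial_t\int|H|^2\le C\|A\|_\infty^2\int|H|^2$, once $\|A\|_\infty$ is known to be bounded for large times, with the integrability $\int_0^\infty\int|H|^2<\infty$, to show $\int_M|H|^2\to0$ for all $t$ and not just along a sequence.

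For constant-coefficient $\phi$, suppose $1-\phi(\tau(p_t,t))\ge\delta$ at some point. We have $|\nabla\tau|\le C|A|$, and $\|A\|_\infty\to0$, so $1-\phi(\tau)\ge\delta/2$ on an intrinsic ball of radius $r_t\to\infty$. Since $F_t$ is almost flat there, that ball has area $\ge cr_t^2$, and hence $\E_\phi[F_t]\ge c\delta r_t^2\to\infty$. This contradicts $\E_\phi[F_t]\le\E_\phi[F_0]$, which proves \eqref{eq:26-05_01}. If $G(\phi)=\{E\}$, then $1-\phi$ is continuous on the compact Grassmannian and vanishes only at $E$, so $\|\tau-E\|_\infty\to0$. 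Finally, the derivative bounds make $F_t$, after translation, locally a graph over $E$ with vanishing gradient and curvature. Arzel\`a--Ascoli gives subsequential local smooth limits, each of which is the plane $E$, so the full family converges.
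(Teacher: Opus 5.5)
Your reduction up to $\int_0^\infty\int_M|A|^2\,\d\mathrm{vol}\,\d t\le\E_\phi[F_0]$ follows the paper, with two small corrections. Since $\phi$ is only coflat, you need \Cref{cor:main_calibration_detailed} rather than \Cref{thm:main_calibration}. Also, $\Theta(F_t)=1$ is best obtained from $\partial_t\d\mathrm{vol}=-|H|^2\d\mathrm{vol}$, which gives $\Theta(F_t)\le1$, combined with the integrality and noncompactness in \Cref{lem:MS_GB}.

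The genuine gap is the step you yourself flag as the main obstacle. You want an $\varepsilon$-regularity statement: smallness of $\sup_{x_0}\int_{t-1}^t\int_{B_1(x_0)}|A|^2\,\d\mu_r\,\d r$ forces $|A|\le C\varepsilon^{1/2}$ at time $t$. This is not a known result, and the tools you cite do not give it. Ecker--Huisken interior estimates need a gradient (graphicality) bound. Kuwert--Sch\"atzle-type localized estimates need \emph{spatial} smallness $\int_{B_\rho}|A|^2\,\d\mu_r<\varepsilon$ at every time $r$ of the interval.

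The spatial $L^2$ norm of $A$ is scale-invariant for surfaces and monotone in the radius, so it controls all smaller scales. The time integral does not: a parabolic cylinder of radius $\rho$, rescaled to unit size, carries $\rho^{-2}\int_{Q_\rho}|A|^2$, which is not bounded by the unit-scale value. So your hypothesis does not exclude a short-lived concentration of spatial energy at a small scale. Such an episode contributes to the time integral only in proportion to its lifetime, and while it lasts the quartic term in the evolution of $\int|A|^2$ cannot be absorbed. The proposal gives no argument ruling this out. Your backup is circular, since it presupposes a uniform bound on $\|A\|_\infty$ for large times, which is exactly what must be proved.

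The missing idea is that your spacetime bound is \emph{global in space}. It therefore gives a single time $t_0$ with $\int_M|A|^2\,\d\mathrm{vol}\big|_{t_0}<\varepsilon_0$, and in dimension two this global smallness propagates forward. In $\partial_t\int_M|A|^2\gamma^2\,\d\mathrm{vol}$, the term $\int_M|A|^4\gamma^2\,\d\mathrm{vol}$ is absorbed via the Michael--Simon inequality as long as $\int_{[\gamma>0]}|A|^2$ is small. Letting the cutoff radius tend to infinity closes the continuity argument and gives $\int_M|A|^2\le\varepsilon_0$ for all $t\ge t_0$ (\Cref{lem:2d-conv_3}).

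In this small-energy regime, after noting that $F_{t_0}$ has bounded geometry, \Cref{thm:2D_small_energy} supplies what your $\varepsilon$-regularity was meant to:
\begin{itemize}
\item time-weighted bounds $t\int|\nabla A|^2+t^2\int|\nabla^2A|^2\le C\varepsilon_0$;
\item an $L^\infty$ bound on $A$ by interpolation;
\item higher-order bounds from interior estimates;
\item via the M\"uller--\v{S}ver\'ak conformal parametrization, compactness in which every translated limit is a plane, hence $\|\nabla^\ell A\|_\infty\to0$.
\end{itemize}

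Once this decay is in hand, your treatment of the constant-coefficient case is valid. The Lipschitz bound on $\phi(\tau)$ over large, almost flat intrinsic balls contradicts $\E_\phi[F_t]\le\E_\phi[F_0]$. This is a somewhat more direct alternative to the paper's Fatou argument on the limit planes.
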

In \eqref{eq:02-03-26_2} and in the sequel, we also use $\nabla$ for the normal connection along the immersion $F_t$ and the induced metric $g_t=F_t^*\langle\cdot,\cdot\rangle$ to measure the length of tensors.

The proof of \Cref{thm:2d-conv} combines \Cref{cor:main_calibration_detailed} with \Cref{lem:MS_GB} and a small-energy convergence theorem, see \Cref{thm:2D_small_energy} below. 
The key point is that finite total curvature and $\Theta(F_0)=1$ force the total Gaussian curvature to vanish, which converts the dissipation of mean curvature into that of second fundamental form. 
In particular, the method is quite different from codimension-one results based on maximum principles, such as convergence to translators \cite{MR2321890} or to self-expanders \cite{MR4170224}.

\begin{remark}\label{rem:V_by_monotonicity}
    In \Cref{thm:2d-conv}, we need not assume uniformly finite local volume as it automatically holds for a complete, connected, and oriented immersion $F\colon M^2\to\R^n$ with finite total curvature.
    Indeed, \Cref{lem:MS_GB} yields that $F$ is proper and the monotonicity formula for the Willmore energy \cite[(A.6)]{MR2119722} implies that there exists a universal $C>0$ such that for all $x_0\in\R^n$ and $R\ge 1$,
    \begin{align}\label{eq:03-03-26}
       \mu(B_1(x_0)) \le C\Big(\frac{\mu(B_R(x_0))}{R^2}+\int_{B_R(x_0)}|H|^2 \d\mu\Big),
    \end{align}
    cf.\ \eqref{eq:integral_notation} for the notation.
    In particular, since $|H|^2\le 2|A|^2$, sending $R\to\infty$ in \eqref{eq:03-03-26} yields \eqref{eq:intro_M_finite}.
\end{remark}

We now collect some preparatory results for the proof of \Cref{thm:2d-conv}.
Throughout the rest of this section, we use $C\in(0,\infty)$ to denote a universal constant that is allowed to change from line to line.
We first recall the following inequality for the evolution of the second fundamental form and its derivatives.
\begin{lemma}[cf.\ {\cite[Lemma 2.4]{MR4810572}}]\label{lem:dt_nabla_ell_A}
    Let $F\colon M^m\times[0,T]\to\R^n$ be a purely normal mean curvature flow. 
    Then for $\ell\in\N_0$,
    \begin{align}
        (\partial_t - \Delta)|\nabla^\ell A|^2 + 2|\nabla^{\ell+1}A|^2 \leq C(\ell,m)\sum_{i+j+k=\ell}|\nabla^i A||\nabla^j A||\nabla^k A||\nabla^\ell A|.
    \end{align}
\end{lemma}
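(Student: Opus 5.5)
This is the standard evolution inequality for $\nabla^\ell A$ under purely normal mean curvature flow, as in Huisken's hypersurface computation. The plan is to carry it out in arbitrary codimension, using the normal connection on the normal bundle and the Levi-Civita connection on $M$, with $\nabla$ standing for the induced connection on $T^*M^{\otimes k}\otimes NM$.

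The first step is the evolution of $A$ itself. For $\partial_tF=H$ we have $\partial_t g_{ij}=-2\langle H,A_{ij}\rangle$. Using the Codazzi and Gauss equations together with Simons' identity, we get
\[
\nabla_t A=\Delta A+A*A*A,
\]
where $\nabla_t$ is the time derivative coupled to the normal connection and made compatible with the evolving metric. Here $*$ denotes contractions with $g$ and the normal metric, with universal coefficients depending only on $m$. The Simons identity produces curvature terms of $M$ and normal curvature terms. By Gauss and Ricci these are quadratic in $A$, so every correction term is cubic in $A$.

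The second step is induction on $\ell$. Commuting $\nabla_t$ with $\nabla$ gives $\nabla_t\nabla T-\nabla\nabla_tT=A*\nabla A*T+A*A*\nabla T$, since the time derivatives of the Christoffel symbols and of the normal connection are of the form $A*\nabla A$. Commuting $\Delta$ with $\nabla$ produces curvature terms $R*\nabla T$ and $\nabla R*T$, with $R=A*A$ and $\nabla R=A*\nabla A$. Combining these yields
\[
\nabla_t\nabla^\ell A=\Delta\nabla^\ell A+\sum_{i+j+k=\ell}\nabla^iA*\nabla^jA*\nabla^kA.
\]

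The final step computes $\partial_t|\nabla^\ell A|^2$. Metric compatibility gives $2\langle\nabla_t\nabla^\ell A,\nabla^\ell A\rangle$, plus terms from differentiating the inverse metric contractions. Those extra terms are of the form $A*A*\nabla^\ell A*\nabla^\ell A$, which the sum absorbs (the choice $i=j=0$, $k=\ell$). Then
\[
\Delta|\nabla^\ell A|^2=2\langle\Delta\nabla^\ell A,\nabla^\ell A\rangle+2|\nabla^{\ell+1}A|^2.
\]
Bounding each $*$-term in norm by Cauchy--Schwarz gives the claimed inequality with $C=C(\ell,m)$. The constant does not depend on the codimension because all contractions are taken over tangential indices plus inner products in the normal fibre.

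The only delicate point is bookkeeping the normal-connection curvature in higher codimension. In Simons' identity and in the commutators, the normal curvature $R^\perp$ enters as well as $R$, and one must check via the Ricci equation that $R^\perp=A*A$. Once this is done, the structure is identical to the hypersurface case. Since this is standard, I would cite the detailed computation in \cite[Lemma 2.4]{MR4810572} and only sketch the $*$-calculus described above.
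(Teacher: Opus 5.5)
Your proposal is correct and matches the paper, which gives no separate argument for this lemma but cites the standard Huisken-type computation in \cite[Lemma 2.4]{MR4810572}. Your sketch is that computation: $\nabla_t A=\Delta A+A*A*A$, commutators reduced via Gauss--Ricci to $R,R^\perp=A*A$, induction on $\ell$, and $\Delta|\nabla^\ell A|^2=2\langle\Delta\nabla^\ell A,\nabla^\ell A\rangle+2|\nabla^{\ell+1}A|^2$. One cosmetic point is that a metric-compatible $\nabla_t$ produces no extra inverse-metric terms in $\partial_t|\nabla^\ell A|^2$, but keeping them is harmless since they fall under the $i=j=0$, $k=\ell$ summand.
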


A crucial ingredient is the following localized evolution of $|A|^2$.
\begin{lemma}\label{lem:2d-conv_1}
    Let $F\colon M^m\times [0,T]\to\R^n$ be a proper, purely normal mean curvature flow.
    Then for $\gamma=\tilde \gamma \circ F$ with $\tilde\gamma\in C_c^\infty(\R^n)$,
    \begin{align}
        &\partial_t \int_M|A|^2\gamma^2\d\mathrm{vol} + \int_M |\nabla A|^2\gamma^2\d\mathrm{vol}\\
        &\leq C(m) \int_M|A|^4 \gamma^2\d\mathrm{vol} + C(m) \int_{M}|A|^2|\mathrm{D}\tilde\gamma\circ F|^2\d\mathrm{vol}.
    \end{align}
\end{lemma}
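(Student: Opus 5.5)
Integrating \Cref{lem:dt_nabla_ell_A} with $\ell=0$ against the cutoff $\gamma^2$ does most of the work. Then I absorb the boundary terms from the cutoff and from the time derivative of the measure using Young's inequality. Since the flow is purely normal, $\partial_t F=H$, so \eqref{eq:time_derivative_volume_form} gives $\partial_t\d\mathrm{vol}=-|H|^2\d\mathrm{vol}$. Also $\partial_t\gamma^2=2\gamma\langle\D\tilde\gamma\circ F,H\rangle$. Hence
\begin{align}
    \partial_t\int_M|A|^2\gamma^2\d\mathrm{vol}
    =\int_M(\partial_t|A|^2)\gamma^2\d\mathrm{vol}
    +2\int_M|A|^2\gamma\langle\D\tilde\gamma\circ F,H\rangle\d\mathrm{vol}
    -\int_M|A|^2|H|^2\gamma^2\d\mathrm{vol}.
\end{align}
Differentiation under the integral is justified because $\gamma$ has compact support in $M$ on $[0,T]$, by properness. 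The last term is nonpositive and can be dropped. For the middle term I use $|H|\le\sqrt m|A|$ and Young's inequality to bound it by $C(m)\int|A|^4\gamma^2+C\int|A|^2|\D\tilde\gamma\circ F|^2$.

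For the first term, \Cref{lem:dt_nabla_ell_A} with $\ell=0$ gives $\partial_t|A|^2\le\Delta|A|^2-2|\nabla A|^2+C(m)|A|^4$. I then integrate $\int_M\Delta|A|^2\gamma^2$ by parts on the compactly supported region:
\[
\int_M\Delta|A|^2\,\gamma^2\d\mathrm{vol}=-\int_M 2\gamma\, g(\nabla|A|^2,\nabla\gamma)\d\mathrm{vol}.
\]
Kato's inequality in the form $|\nabla|A|^2|\le2|A||\nabla A|$, together with $|\nabla\gamma|\le|\D\tilde\gamma\circ F|$ (the gradient on $M$ is the tangential projection of the ambient gradient), bounds this by $4\int|A||\nabla A|\gamma|\D\tilde\gamma\circ F|$. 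That in turn is at most $\int|\nabla A|^2\gamma^2+4\int|A|^2|\D\tilde\gamma\circ F|^2$.

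Collecting terms, $-2\int|\nabla A|^2\gamma^2+\int|\nabla A|^2\gamma^2=-\int|\nabla A|^2\gamma^2$, which gives the claimed inequality. The only delicate points are bookkeeping ones. First, the evolution inequality holds pointwise where $A$ is smooth, and $|A|^2$ is smooth, so no Kato issue arises beyond the elementary estimate above. Second, $\nabla$ on $A$ is the normal-bundle connection, and compatibility with the metric still gives $\nabla|A|^2=2\langle\nabla A,A\rangle$.

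I expect no real obstacle. The main care is in checking that the ambient cutoff yields exactly the weight $|\D\tilde\gamma\circ F|^2$ with no additional curvature terms. This works because only first derivatives of $\tilde\gamma$ appear; the Hessian never enters, since I integrate by parts rather than computing $\Delta\gamma^2$.
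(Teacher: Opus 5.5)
Your proof is correct and follows essentially the same route as the paper. You differentiate under the integral using properness and $\partial_t\d\mathrm{vol}=-|H|^2\d\mathrm{vol}$, apply \Cref{lem:dt_nabla_ell_A} with $\ell=0$, integrate the Laplacian term by parts, and absorb the cross terms with Young's inequality using $|\nabla\gamma|\le|\D\tilde\gamma\circ F|$ and $|H|^2\le m|A|^2$. Your explicit tracking of constants and of the dropped nonpositive term $-\int_M|A|^2|H|^2\gamma^2\d\mathrm{vol}$ only fills in details the paper leaves implicit.
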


\begin{proof}
    As in \eqref{eq:09-04_03}, we may differentiate under the integral due to properness. Using  \eqref{eq:time_derivative_volume_form}, \Cref{lem:dt_nabla_ell_A} with $\ell=0$, and integration by parts for the Laplacian term, we find
   \begin{align}
       &\partial_t \int_M|A|^2\gamma^2\d\mathrm{vol} +2\int_M|\nabla A|^2 \gamma^2\d\mathrm{vol} \\
       & \leq C(m)\int_M|A|^4 \gamma^2\d\mathrm{vol} + 4 \int_M |A| |\nabla A| |\nabla \gamma|\gamma\d\mathrm{vol}  + 2\int_M|A|^2 \langle \mathrm{D}\tilde\gamma\circ F, H\rangle \gamma\d\mathrm{vol}.
   \end{align}
   We now use Young's inequality for the second and third term.
   Absorbing the term involving $|\nabla A|^2$, and noting $|\nabla\gamma|=|(\mathrm{D}\tilde\gamma\circ F)^\top| \leq |\mathrm{D}\tilde\gamma\circ F|$ and $|H|^2\leq m|A|^2$, the statement then follows.
\end{proof}

For $m=2$, we now use this to conclude monotonicity of $\int_M|A|^2\d\mathrm{vol}$ if it is sufficiently small.
We will make repeated use of the following consequence of the Michael--Simon Sobolev inequality.

\begin{lemma}\label{lem:Michael-Simon-2D}
    There exists a universal constant $\varepsilon_0\in (0,1)$ with the following property.
    Let $F\colon M^2\to\R^n$ be an immersed surface (without boundary) and let $u\in C_c^\infty(M)$ with  $\int_{[|u|>0]}|H|^2\d\mathrm{vol}\leq \varepsilon_0$.
    Then
    \begin{align}
        \int_M u^2\d\mathrm{vol}\leq c \Big(\int_M|\nabla u|\d\mathrm{vol}\Big)^2,
    \end{align}
    where $c\in (0,\infty)$ is a universal constant.
\end{lemma}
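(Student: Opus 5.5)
The plan is to derive this from the Michael--Simon Sobolev inequality in its $L^1$ form on immersed submanifolds, specialized to $m=2$. I apply it to $u^2$ (which is $C^1_c$, or first to a smooth approximation $\sqrt{u^2+\delta^2}-\delta$ composed suitably, passing to the limit at the end):
\begin{align}
    \Big(\int_M |u|^{4}\d\mathrm{vol}\Big)^{1/2}\leq C_{MS}\Big(\int_M |\nabla (u^2)|\d\mathrm{vol}+\int_M |H| u^2\d\mathrm{vol}\Big),
\end{align}
where $C_{MS}$ depends only on $m=2$ and not on $n$, by the dimension-free form of Michael--Simon. This works, but it leads to an $L^4$ bound on the left. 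A cleaner route, which I will follow instead, applies Michael--Simon directly to $u$ in the form $\big(\int |u|^2\big)^{1/2}\le C_{MS}\big(\int|\nabla u|+\int|H||u|\big)$. This is the $m=2$ case, where $m/(m-1)=2$.

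The task is then to absorb the mean curvature term. By Cauchy--Schwarz on the support of $u$,
\begin{align}
    \int_M|H||u|\d\mathrm{vol}\leq\Big(\int_{[|u|>0]}|H|^2\d\mathrm{vol}\Big)^{1/2}\Big(\int_M u^2\d\mathrm{vol}\Big)^{1/2}\leq \varepsilon_0^{1/2}\Big(\int_M u^2\d\mathrm{vol}\Big)^{1/2}.
\end{align}
I choose $\varepsilon_0\in(0,1)$ universal with $C_{MS}\varepsilon_0^{1/2}\le\frac12$. The last term can then be moved to the left, since $\int u^2<\infty$ because $u$ has compact support. This gives $\big(\int u^2\big)^{1/2}\le 2C_{MS}\int|\nabla u|$, and squaring yields the claim with $c=4C_{MS}^2$.

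The only delicate points are the following. First, Michael--Simon is classically stated for $C^1_c$ functions on immersed submanifolds with the constant depending only on $m$. Our $u$ is smooth with compact support, so it applies directly, and no properness or completeness is needed. Second, the constant must be independent of the codimension, which holds in the standard formulation with $|H|$ the norm of the mean curvature vector. I expect the main obstacle to be simply citing the correct dimension-only constant; the absorption step itself is routine.
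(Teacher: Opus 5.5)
Your proposal is correct and follows the same argument as the paper: apply the two-dimensional Michael--Simon inequality (whose constant does not depend on the codimension), use Cauchy--Schwarz on $[|u|>0]$, and absorb the mean curvature term by choosing $\varepsilon_0$ small. The only difference is cosmetic: the paper squares first and uses $(a+b)^2\le 2a^2+2b^2$ before absorbing, whereas you absorb at the level of $L^2$ norms, and your opening detour through $u^2$ is unnecessary.
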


\begin{proof}
    The two-dimensional case of the Michael--Simon Sobolev inequality \cite[Theorem 2.1]{MR344978} yields, for a universal constant $C_{\mathrm{MS}}>0$ (independent of $n$),
    \begin{align}
        \int_M u^2\d\mathrm{vol}&\leq C_{\mathrm{MS}} \Big(\int_M|\nabla u|\d\mathrm{vol} + \int_M |H| |u|\d\mathrm{vol}\Big)^2\\
        &\leq 2C_{\mathrm{MS}}\Big(\int_M|\nabla u|\d\mathrm{vol}\Big)^2 + 2C_{\mathrm{MS}} \int_{[|u|>0]}|H|^2\d\mathrm{vol} \int_M|u|^2\d\mathrm{vol}.
    \end{align}
    For $\varepsilon_0$ sufficiently small (depending on $C_{\mathrm{MS}}>0$), the claim follows.
\end{proof}

\begin{lemma}\label{lem:2d-conv_2}
    Let $F\colon M^2\times [0,T]\to\R^n$ be a proper, purely normal mean curvature flow.
    Let $\tilde\gamma\in C_c^\infty(\R^n)$, $0\leq \tilde\gamma\leq 1$, and $\gamma=\tilde\gamma\circ F$. 
    There exist universal constants $\varepsilon_0\in(0,1)$ and $C\in (0,\infty)$ such that if $\int_{[\gamma >0]} |A|^2\d\mathrm{vol} \leq \varepsilon_0$ at some $t$, then
    \begin{align}
         &\partial_t \int_M|A|^2\gamma^2\d\mathrm{vol} + \frac12\int_M|\nabla A|^2\gamma^2\d\mathrm{vol} \leq C\Vert \mathrm{D}\tilde\gamma\Vert_\infty^2 \int_{[\gamma>0]}|A|^2\d\mathrm{vol}.
    \end{align}
\end{lemma}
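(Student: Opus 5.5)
Starting from \Cref{lem:2d-conv_1} with $m=2$, the only term to handle is the quartic one, $C\int_M|A|^4\gamma^2\d\mathrm{vol}$. I will absorb it into the dissipation $\int_M|\nabla A|^2\gamma^2\d\mathrm{vol}$ by means of \Cref{lem:Michael-Simon-2D}. The natural test function is $u\vcentcolon=|A|\gamma$, or rather $u\vcentcolon=\sqrt{|A|^2+\delta^2}\,\gamma$ with $\delta\to0$ at the end, to avoid the non-smoothness of $|A|$ at its zeros. Then $u$ is compactly supported on $M$ by properness, and $[|u|>0]\subset[\gamma>0]$. Since $|H|^2\le 2|A|^2$, the hypothesis $\int_{[\gamma>0]}|A|^2\d\mathrm{vol}\le\varepsilon_0$ implies the smallness condition of \Cref{lem:Michael-Simon-2D} (for $\delta$ small, after halving $\varepsilon_0$ if needed).

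The key estimate applies \Cref{lem:Michael-Simon-2D} to $u^2=|A|^2\gamma^2$ instead of to $u$:
\begin{align}
\int_M|A|^4\gamma^2\d\mathrm{vol}\le\int_M(|A|\gamma)^2|A|^2\d\mathrm{vol}
\end{align}
is not directly of the right form. So I take $w\vcentcolon=|A|^2\gamma$ and observe that $\int_M|A|^4\gamma^2\d\mathrm{vol}=\int_M w^2\d\mathrm{vol}\le c\big(\int_M|\nabla w|\d\mathrm{vol}\big)^2$. By Kato's inequality $|\nabla|A||\le|\nabla A|$,
\begin{align}
|\nabla w|\le 2|A||\nabla A|\gamma+|A|^2|\nabla\gamma|.
\end{align}
Cauchy--Schwarz then gives
\begin{align}
\Big(\int_M|\nabla w|\Big)^2\le 8\int_{[\gamma>0]}|A|^2\d\mathrm{vol}\int_M|\nabla A|^2\gamma^2\d\mathrm{vol}+2\Big(\int_M|A|^2|\nabla\gamma|\d\mathrm{vol}\Big)^2.
\end{align}
In the last term I bound $|\nabla\gamma|\le\|\D\tilde\gamma\|_\infty$ and use $\int_{[\gamma>0]}|A|^2\le\varepsilon_0\le1$, which yields at most $2\|\D\tilde\gamma\|_\infty^2\int_{[\gamma>0]}|A|^2\d\mathrm{vol}$. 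Hence
\begin{align}
C\int_M|A|^4\gamma^2\d\mathrm{vol}\le 8Cc\,\varepsilon_0\int_M|\nabla A|^2\gamma^2\d\mathrm{vol}+2Cc\,\|\D\tilde\gamma\|_\infty^2\int_{[\gamma>0]}|A|^2\d\mathrm{vol}.
\end{align}

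Choosing $\varepsilon_0$ so small that $8Cc\,\varepsilon_0\le\frac12$ (with $C$ the constant of \Cref{lem:2d-conv_1} for $m=2$), the first term is absorbed into the left-hand side, leaving $\frac12\int_M|\nabla A|^2\gamma^2\d\mathrm{vol}$. The second term of \Cref{lem:2d-conv_1} is bounded by $C\|\D\tilde\gamma\|_\infty^2\int_{[\gamma>0]}|A|^2\d\mathrm{vol}$, which finishes the estimate.

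The main obstacle is regularity: $w=|A|^2\gamma$ is only Lipschitz, and $\gamma$ is not in $C_c^\infty(M)$ unless properness is used. Lipschitz functions with compact support are admissible in Michael--Simon by approximation, so I expect this to be routine. Kato's inequality in the normal-bundle setting, $|\nabla|A||\le|\nabla A|$ almost everywhere, is standard. All constants are universal because the constant in \Cref{lem:Michael-Simon-2D} does not depend on $n$.
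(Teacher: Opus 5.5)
Your argument is correct and is essentially the paper's proof: apply \Cref{lem:Michael-Simon-2D} to $|A|^2\gamma$, use Cauchy--Schwarz to split $\bigl(\int_M|\nabla(|A|^2\gamma)|\d\mathrm{vol}\bigr)^2$ into the same two terms with constants $8$ and $2$, and absorb the first into the dissipation term of \Cref{lem:2d-conv_1} by shrinking $\varepsilon_0$. Your regularity concerns are unnecessary: $|A|^2$ is smooth and $\gamma$ has compact support by properness, so $|A|^2\gamma\in C_c^\infty(M)$, and $|\nabla|A|^2|\le 2|A||\nabla A|$ follows from Cauchy--Schwarz alone, with no Kato inequality, approximation argument, or $u=|A|\gamma$ detour needed.
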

\begin{proof}
    We estimate the first and second term on the right hand side of \Cref{lem:2d-conv_1}. For the second term, we have
    \begin{align}
        \int_M|A|^2 | \mathrm{D}\tilde\gamma\circ F|^2\d\mathrm{vol} \leq \Vert \mathrm{D}\tilde\gamma\Vert_\infty^2 \int_{[\gamma>0]}|A|^2\d\mathrm{vol}.
    \end{align}
    For the first term, by \Cref{lem:Michael-Simon-2D} and $|H|^2\leq 2|A|^2$, we have for small $\varepsilon_0$,
    \begin{align}
        \int_M|A|^4\gamma^2\d\mathrm{vol} &\leq c\Big(\int_M( 2|A||\nabla A| \gamma + |A|^2|\nabla\gamma|)\d\mathrm{vol}\Big)^2\\
        &\leq 8c \int_{[\gamma>0]}|A|^2\d\mathrm{vol} \int_M|\nabla A|^2 \gamma^2\d\mathrm{vol} + 2c \Vert \mathrm{D}\tilde\gamma\Vert_\infty^2 \big(\int_{[\gamma>0]} |A|^2\d\mathrm{vol}\big)^2,\label{eq:19-12-1}
    \end{align}
    by H\"older's inequality. Reducing $\varepsilon_0>0$, the first term on the right hand side of \eqref{eq:19-12-1} can be absorbed by the second term on the left hand side of \Cref{lem:2d-conv_1}.
\end{proof}

\begin{lemma}\label{lem:2d-conv_3}
    Let $F\colon M^2\times [0,T]\to\R^n$ be a proper, purely normal mean curvature flow, where the initial datum has uniformly finite local volume and $\sup_{M\times [0,T]}|A|<\infty$. 
    If $\int_M|A|^2\d\mathrm{vol} \vert_{t=0}<\infty$, then also $\int_M|A|^2\d\mathrm{vol} < \infty$ for all $t\in [0,T]$. Moreover, there is a universal constant $\varepsilon_0\in(0,1)$ such that if $\int_M|A|^2\d\mathrm{vol}\vert_{t=0}<\varepsilon_0$, then 
    \begin{align}
         &\int_M|A|^2\d\mathrm{vol} + \frac12 \int_0^t\int_M|\nabla A|^2\d\mathrm{vol}\d t' \leq \varepsilon_0
    \end{align}
    for all $t\in [0,T]$.
\end{lemma}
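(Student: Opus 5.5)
Both assertions will come from the localized inequalities of \Cref{lem:2d-conv_1} and \Cref{lem:2d-conv_2}, applied with cutoffs $\tilde\gamma$ equal to $1$ on $B_R(x_0)$, supported in $B_{2R}(x_0)$, with $\|\D\tilde\gamma\|_\infty\le C/R$, and then sending $R\to\infty$. By properness and the uniform local volume bound, differentiating under the integral is justified; the latter propagates to $[0,T]$ by \Cref{lem:uniform_bounded_local_volume_new}. Throughout, $\mu_t(B_{2R}(x_0))\le CR^n$ by covering.

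\emph{Finiteness.} Set $\Lambda:=\sup_{M\times[0,T]}|A|<\infty$. In \Cref{lem:2d-conv_1} I will drop the $|\nabla A|^2$ term and bound $|A|^4\le\Lambda^2|A|^2$. Then $y_R(t):=\int_M|A|^2\gamma^2\d\mathrm{vol}$ satisfies
\begin{equation}
y_R'\le C\Lambda^2 y_R+\frac{C}{R^2}\int_{B_{2R}(x_0)}|A|^2\d\mu_t .
\end{equation}
I will take the supremum over $x_0$, as in the covering argument of \Cref{thm:main_calibration_detailed}, defining $a(t,R):=\sup_{x_0}\int_{B_R(x_0)}|A|^2\d\mu_t$. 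This quantity is finite, since it is at most $\Lambda^2 CR^n$. Using $a(t,2R)\le\Gamma a(t,R)$, one gets
\begin{equation}
a(t,R)\le 2a(0,\cdot)+C\int_0^t(\Lambda^2+R^{-2})a\,\d r .
\end{equation}
Here $a(0,R)\le\int_M|A|^2\d\mathrm{vol}|_{t=0}$. Gr\"onwall then gives $a(t,R)\le C\int_M|A|^2\d\mathrm{vol}|_0\,e^{C(\Lambda^2+R^{-2})t}$. Since the right-hand side is essentially independent of $R$, monotone convergence as $R\to\infty$ yields $\int_M|A|^2\d\mathrm{vol}|_t<\infty$.

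\emph{Small energy.} Let $\varepsilon_1$ be the constant of \Cref{lem:2d-conv_2}, and choose $\varepsilon_0\le\varepsilon_1/(2\Gamma)$ or smaller. Take $x_0$-balls and argue by a continuity/bootstrap argument. Define $t^*$ as the supremum of times $s$ such that $\sup_{x_0}\int_{B_{2R}(x_0)}|A|^2\d\mu_t\le\varepsilon_1$ for all $t\le s$ (uniformly in $R$ large, or first for fixed $R$). On $[0,t^*]$, \Cref{lem:2d-conv_2} holds. Integrating it in time gives
\begin{equation}
\int_{B_R}|A|^2\d\mu_t+\tfrac12\int_0^t\int_{B_R}|\nabla A|^2\le \int_M|A|^2\d\mathrm{vol}|_0+\frac{Ct}{R^2}\sup_{r\le t}\int_M|A|^2\d\mathrm{vol}|_r .
\end{equation}
The last factor is finite by the first part, and uniformly bounded on $[0,T]$ by the Gr\"onwall bound. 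Hence for $R$ large the right-hand side is below the strict initial value plus a small margin. Covering $B_{2R}$ by $\Gamma$ balls of radius $R$ then keeps the local energy strictly below $\varepsilon_1$, so $t^*=T$ by continuity of $t\mapsto\int|A|^2\gamma^2$. Finally, letting $R\to\infty$ (monotone convergence on the left, the error term $\to0$) gives
\begin{equation}
\int_M|A|^2+\tfrac12\int_0^t\int_M|\nabla A|^2\le\int_M|A|^2\d\mathrm{vol}|_0<\varepsilon_0 .
\end{equation}

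\emph{Main obstacle.} The delicate step is the bootstrap. The hypothesis of \Cref{lem:2d-conv_2} involves the region $[\gamma>0]\supset B_{2R}$, while the conclusion controls $B_R$. This mismatch is exactly why the covering constant $\Gamma$ and a slightly smaller $\varepsilon_0$ are needed. The step also requires the global finiteness established in the first part, which is what makes the $R^{-2}$ error term vanish.
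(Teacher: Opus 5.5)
Your finiteness argument is the paper's: drop $|\nabla A|^2$, use $|A|^4\le\Lambda^2|A|^2$, take the supremum over $x_0$, cover, apply Gr\"onwall, and let $R\to\infty$. In the small-energy part you diverge from the paper, and not to your advantage. The paper bootstraps on the global quantity. It sets $T_*$ to be the supremum of times up to which $\int_M|A|^2\d\mathrm{vol}\le\varepsilon_0$, where $\varepsilon_0$ is simply the constant of \Cref{lem:2d-conv_2}. Since $\int_{[\gamma>0]}|A|^2\d\mathrm{vol}\le\int_M|A|^2\d\mathrm{vol}$, this gives the hypothesis of \Cref{lem:2d-conv_2} for every cutoff at once, so there is no $B_R$ versus $B_{2R}$ mismatch and no covering. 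Integrating on $[0,T_*]$ and letting $R\to\infty$ gives the bound at $T_*$; the error term $CR^{-2}T_*\sup_t\int_M|A|^2\d\mathrm{vol}$ vanishes by the first part, exactly as in your argument. Continuation past $T_*$ comes from the $R\to\infty$ limit of the no-smallness localized inequality restarted at $T_*$, i.e.\ $\int_M|A|^2\d\mathrm{vol}|_t\le\int_M|A|^2\d\mathrm{vol}|_{T_*}+C\Lambda^2\int_{T_*}^t\int_M|A|^2\d\mathrm{vol}\,\d t'$. So the ``main obstacle'' you identify is self-inflicted once global finiteness is known. In fact your reading ``uniformly in $R$ large'' is equivalent to the global condition $\int_M|A|^2\d\mathrm{vol}\le\varepsilon_1$, and then the covering step is superfluous.

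If you keep the bootstrap at fixed $R$, two points need repair.
\begin{enumerate}
\item Your threshold $\varepsilon_0\le\varepsilon_1/(2\Gamma)$ depends on $n$ through $\Gamma=\Gamma(n)$, whereas the lemma asserts a universal $\varepsilon_0$. As written you prove a weaker statement.
\item Continuity of each $t\mapsto\int_M|A|^2\gamma^2\d\mathrm{vol}$ does not show that a bound on $\sup_{x_0}\int_{B_{2R}(x_0)}|A|^2\d\mu_t$ persists slightly past $t^*$, because a supremum of continuous functions is only lower semicontinuous. You need a growth bound uniform in $x_0$. \Cref{lem:2d-conv_1} with $|A|\le\Lambda$, restarted at $t^*$, provides one. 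But with a cutoff equal to $1$ on $B_{2R}(x_0)$ it involves $\int_{B_{4R}(x_0)}|A|^2\d\mu_{t^*}$, and hence a further covering factor in the choice of $\varepsilon_0$.
\end{enumerate}
The one thing a local bootstrap could buy is independence from the first part: under the local hypothesis, the error term of \Cref{lem:2d-conv_2} is at most $CR^{-2}\varepsilon_1 t$. You forgo this by invoking $\sup_r\int_M|A|^2\d\mathrm{vol}|_r$ anyway.
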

\begin{proof}    
    Let $x_0\in\R^n$, $R>0$ and $\chi_{B_R(x_0)}\leq \tilde\gamma\leq \chi_{B_{2R}(x_0)}$ with $\Vert \mathrm{D}\tilde\gamma\Vert_\infty \leq \frac{C}{R}$.   
    Writing $\Lambda\vcentcolon = \sup_{M\times [0,T]}|A|$ and integrating \Cref{lem:2d-conv_1} over $[0,t]$ implies (cf.\ \eqref{eq:integral_notation})
     \begin{align}
        &\int_{B_R(x_0)} |A|^2 \d\mu_t \\
        &\leq \int_{B_{2R}(x_0)} |A|^2 \d\mu_0 + C(\Lambda^2 + R^{-2}) \int_0^t \int_{B_{2R}(x_0)}|A|^2\d\mu_{t'} \d t'.\label{eq:02-02-26_2}
    \end{align}
    Note that by \Cref{lem:uniform_bounded_local_volume_new}, $F_t$ has uniformly finite local volume for all $t\in [0,T]$.
    For $t\in [0,T]$ and $R>0$, arguing as in \eqref{eq:R-unif-bounded-local-volume}, the following function is well defined:
    \begin{align}\label{eq:def_kappa}
          \kappa(t,R) \vcentcolon = \sup_{x_0\in\R^n}\int_{B_R(x_0)} |A|^2\d\mu_t \leq \Lambda^2\sup_{t\in[0,T]} \sup_{x_0\in\R^n}\mu_t(B_R(x_0)).
    \end{align}
    In particular, for any $R>0$, $\kappa(\cdot,R)$ is bounded and lower semicontinuous.
    With $\Gamma=\Gamma(n)$ as in the proof of \Cref{thm:main_calibration_detailed}, we obtain
    \begin{align}\label{eq:kappa_ineq}
        \kappa(t,R) \leq \Gamma\kappa(0,R) + C\Gamma (\Lambda^2+R^{-2}) \int_0^t \kappa(t',R)\d t'.
    \end{align}
    Gr\"onwall's inequality implies that for $t\in [0,T]$ we have \begin{align}\label{eq:curvature_concentration_estimate}
        \kappa(t,R) \leq \Gamma \kappa(0,R) e^{C\Gamma(\Lambda^2+R^{-2})t}.
    \end{align}
    In particular, sending $R\to\infty$, we conclude that $\sup_{t\in [0,T]} \Vert A\Vert_{L^2}<\infty$, yielding the first part of the statement.

    We now assume $\int_M|A|^2\d\mathrm{vol}\vert_{t=0}<\varepsilon_0$ with $\varepsilon_0>0$ as in \Cref{lem:2d-conv_2}. We set
    \begin{align}
        T_* \vcentcolon = \sup\{ T' \in [0,T]\mid \int_M|A|^2\d\mathrm{vol} \leq \varepsilon_0 \text{ for all }t\in [0,T']\}.
    \end{align}
    Sending $R\to\infty$ in \eqref{eq:02-02-26_2}, we find  $T_*>0$.
    Integrating \Cref{lem:2d-conv_2} over $[0, T_*]$ and sending $R\to\infty$ yield
    \begin{align}\label{eq:02-02-26_3}
         &\int_M|A|^2\d\mathrm{vol}\big\vert_{t=T_*} + \frac12 \int_0^{T_*}\int_M|\nabla A|^2\d\mathrm{vol}\d t \leq \int_M|A|^2\d\mathrm{vol}\big\vert_{t=0}.
    \end{align}
    If $T_*<T$, then by assumption and \eqref{eq:02-02-26_3} we have $\int_M|A|^2\d\mathrm{vol}\vert_{t=T_*}<\varepsilon_0$.
    Applying \eqref{eq:02-02-26_2} with $R\to\infty$ and $t=0$ replaced by $t=T_*$, this contradicts the maximality of $T_*$. 
    Thus $T_*=T$ and the statement follows from \eqref{eq:02-02-26_3} (as $T$ can be replaced by any $t\in[0,T]$).
\end{proof}

As a consequence, we obtain a small energy result. Note that we will not require finite calibration energy in the following result; however, this only allows us to conclude subconvergence as $t\to\infty$. Here, we work with initial data $F_0\colon M\to\R^n$ that are properly immersed and have \emph{bounded geometry} (of all orders), i.e., such that 
\begin{align}\label{eq:bounded_geometry}
	\Vert \nabla^\ell A\Vert_\infty<\infty\quad \text{ for all }\ell\in \N_0.
\end{align}
In the class of immersions of bounded geometry, it is possible to show short-time existence of a purely normal mean curvature flow by standard arguments. This solution is proper with bounded geometry locally uniformly in time and is unique with this property. We should note that the general form of short-time existence is not explicitly available in the literature; however, it has been proven for the case $n=m+1$ \cite[Theorem 4.2]{MR1117150} or $m=1$ \cite{MR5008349}.

\begin{theorem}\label{thm:2D_small_energy}
    There exists $\varepsilon_0 \in (0,1)$ with the following property.
    Let $M\cong \R^2$ and $F_0\colon M\to\R^n$ be a complete immersion with bounded geometry \eqref{eq:bounded_geometry} such that
    \[
    \int_{M}|A|^2\d\mathrm{vol}< \varepsilon_0.
    \]
    Then, there exists a unique immortal, proper, and purely normal mean curvature flow $F\colon M\times [0,\infty)\to\R^n$ starting from $F_0$ such that $\sup_{M\times[0,T]}|A|<\infty$ for all $T<\infty$. Moreover, $F_t$ subconverges, after conformal reparametrization and translation, locally smoothly to an embedded plane as $t\to\infty$ with
    \begin{align}\label{eq:24-04-26_03}
    \lim_{t\to\infty} \Vert \nabla^\ell A\Vert_\infty =0 \quad\text{ for all }\ell\in\N_0.
    \end{align}
    Moreover, we have the estimates
    \begin{align}\label{eq:05-02-26_2}
        \int_M|A|^2\d\mathrm{vol} +  t\int_M|\nabla A|^2\d\mathrm{vol} + t^2 \int_M|\nabla^2 A|^2\d\mathrm{vol} \leq C(n)\varepsilon_0 \quad \text{ for }t\geq 0.
    \end{align}
\end{theorem}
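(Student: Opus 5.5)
Existence and uniqueness will come from the short-time theory for bounded-geometry data mentioned before the statement, combined with a continuation argument. Let $F$ be the maximal purely normal, proper solution with locally bounded geometry on $[0,T_{\max})$. Since $F_0$ is complete with $\int_M|A|^2<\infty$, \Cref{lem:MS_GB} gives properness. \Cref{rem:V_by_monotonicity} gives uniformly finite local volume. So \Cref{lem:2d-conv_3} applies on every $[0,T]\subset[0,T_{\max})$ and gives $\int_M|A|^2\d\mathrm{vol}\le\varepsilon_0$ for all $t$, together with $\int_0^T\int_M|\nabla A|^2\le 2\varepsilon_0$. To show $T_{\max}=\infty$, I would rule out curvature blow-up. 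The plan is an $\varepsilon$-regularity argument: if $\sup|A|(t_k)\to\infty$, parabolically rescale around points realizing (almost) the supremum. The scale-invariant quantity $\int|A|^2$ stays $\le\varepsilon_0$, and the rescaled flows have $|A|\le 1$ near the basepoint with $|A|=1$ at it. Standard interior estimates for higher derivatives (\Cref{lem:dt_nabla_ell_A} with cutoffs) give smooth subconvergence to a nonflat limit with $\int|A|^2\le\varepsilon_0$. The time-integrated bound on $\nabla A$ is scale invariant in dimension two, and a Michael--Simon/Sobolev argument (\Cref{lem:Michael-Simon-2D}) then forces $|A|^4$ small relative to $|A|^2$ on the limit. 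Hence the limit is flat, a contradiction. Equivalently, one can derive a local $L^\infty$ bound $|A|^2\le C\varepsilon_0/t$ via Moser iteration on $(\partial_t-\Delta)|A|^2\le C|A|^4$ using the smallness-based Sobolev inequality.

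The decay estimates \eqref{eq:05-02-26_2} will follow from weighted energy estimates of Ecker--Huisken/Struwe type. Take the localized evolution from \Cref{lem:dt_nabla_ell_A} for $\ell=1,2$ and multiply by $t$ and $t^2$ respectively. Handle the cubic and quartic terms with \Cref{lem:Michael-Simon-2D} under smallness, exactly as in \Cref{lem:2d-conv_2}. For instance, $\int|A|^2|\nabla A|^2\gamma^2\le c\,\varepsilon_0\int(|\nabla^2A|^2+|A|^2|\nabla A|^2)\gamma^2+\dots$ can be absorbed. Then send the cutoff radius $R\to\infty$ using the uniform local volume. This gives $\frac{d}{dt}(t\int|\nabla A|^2)\le\int|\nabla A|^2$ up to absorbed terms, and integrating with the bound from \Cref{lem:2d-conv_3} yields $t\int|\nabla A|^2\le C\varepsilon_0$. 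The $t^2\int|\nabla^2A|^2$ estimate follows similarly. Two-dimensional interpolation plus Sobolev on surfaces with small $\int|H|^2$ then gives $\|A\|_\infty^2\le C(\|A\|_2\|\nabla^2A\|_2+\|\nabla A\|_2^2)\le C\varepsilon_0/t$. Iterating the same scheme for higher $\ell$ gives $\|\nabla^\ell A\|_\infty\to0$, which is \eqref{eq:24-04-26_03}, and also rules out blow-up at finite time as a by-product.

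For subconvergence, pick $p_0\in M$ and translate so that $F_t(p_0)=0$. The uniform bounds on all $\nabla^\ell A$ and the local volume bounds give a local smooth compactness theorem for immersions. Conformal reparametrization is used because $M\cong\R^2$ and we need a fixed domain. The limit immersion has $A\equiv0$, so it is a plane covered with some multiplicity. The step I expect to be the main obstacle is showing that the limit is embedded, i.e.\ has multiplicity one. For this I would use \Cref{lem:MS_GB}: the flow preserves $\Theta(F_t)$. One route is to use the Gauss--Bonnet identity $\int K=2\pi(1-\Theta)$ and continuity of $\int K$ along the flow, which is justified by the $L^2$ bounds on $A$ and the $L^1$ control of $|A|^2$. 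Combined with smallness $|\int K|\le\int|A|^2/2<\varepsilon_0<2\pi$, this forces $\Theta(F_t)=1$. Monotonicity then bounds the density of any limit by $1$, so the limit plane has multiplicity one and is embedded.
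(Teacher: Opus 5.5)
Your outline is correct, but it differs from the paper's proof in three places.

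\textbf{Long-time existence.} The paper uses no blow-up or Moser argument. It first proves the weighted bounds $T^\ell\int_M|\nabla^\ell A|^2+\int_0^T t^\ell\int_M|\nabla^{\ell+1}A|^2\le C(n)\varepsilon_0$ for $\ell=1,2$, using the localized evolution, Michael--Simon absorption, and a covering-plus-Gr\"onwall argument. (For $\ell=1$ the term $\int_{[\gamma>0]}|\nabla A|^2\cdot\int_M|\nabla A|^2\gamma^4$ is handled by Gr\"onwall via the space-time bound, not absorbed as you suggest.) It then applies Kuwert--Sch\"atzle interpolation only to get $\sup_{M\times[0,T_{\max})}|A|<\infty$, and continues via Ecker--Huisken interior estimates. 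This is exactly the ``by-product'' route you mention at the end. Your $\varepsilon$-regularity blow-up is a valid alternative: the vanishing tail of the scale-invariant $\int\!\int|\nabla A|^2$ kills $\nabla A$ on the limit, and Michael--Simon then kills $A$. It does, however, need a compactness theorem for the rescaled flows with area-ratio bounds at all scales, which you would get from $\Theta(F_t)=1$ and Simon's monotonicity. Since the weighted estimates are needed for \eqref{eq:05-02-26_2} anyway, the paper's route is more economical.

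\textbf{Decay.} By tracking the cutoff scale in the interpolation inequality you obtain the explicit rate $\|A\|_\infty^2\le C\varepsilon_0/t$. The paper gets only boundedness from interpolation and proves $\|\nabla^\ell A\|_\infty\to0$ qualitatively, by contradiction against the planar limit.

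\textbf{The limit.} The paper invokes M\"uller--\v{S}ver\'ak's Theorem 4.3.1 (total curvature $<8\pi$). This gives embeddedness of every $F_t$ and a bi-Lipschitz conformal parametrization $\tilde F_t\colon\C\to\R^n$ with $\|u_t\|_\infty\le c$. That conformal-factor bound is what yields uniform $C^\ell$ bounds on $\tilde F_t$, and hence compactness \emph{after conformal reparametrization}.

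Your Gauss--Bonnet/monotonicity argument does recover embeddedness, with two corrections. You don't need continuity of $\int K$ in $t$: at each fixed $t$, $\int K=2\pi(1-\Theta(F_t))$ with $\Theta(F_t)\in\N$ and $|\int K|\le\frac12\int|A|^2<2\pi$ already force $\Theta(F_t)=1$. And monotonicity bounds the density by $1+C\varepsilon_0<2$, not by $1$.

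What this argument does not give is any control of the conformal factor. As written, your compactness step therefore yields convergence after some reparametrization, via a general immersion-compactness theorem, rather than convergence of the conformal parametrizations. To get the statement literally, you need to import the M\"uller--\v{S}ver\'ak bound on $u_t$.
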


\begin{proof}
    By \Cref{lem:MS_GB} and \Cref{rem:V_by_monotonicity}, the immersion $F_0$ is proper and has uniformly finite local volume. By short-time existence, there exists a solution on some maximal interval $[0,T_{\mathrm{max}})$ with $T_{\mathrm{max}}\in (0,\infty]$ which is proper and has bounded geometry locally uniformly in time. In particular, for all $T<T_{\mathrm{max}}$, we have
    \begin{align}\label{eq:05-03-26}
        \sup_{M\times [0,T]}|\nabla^\ell A|<\infty \quad \text{for all }\ell\in\N_0. 
    \end{align}
    Applying \Cref{lem:2d-conv_3} on $[0,T]\subset[0,T_{\mathrm{max}})$, we find
    \begin{align}\label{eq:20-12-1}
       \int_M|A|^2\d\mathrm{vol}\big\vert_{t=T}+\frac{1}{2}\int_0^{T} \int_M|\nabla A|^2\d\mathrm{vol}\d t \leq \varepsilon_0.
    \end{align}

    We now prove that for $\ell=1,2$,
    \begin{align}\label{eq:260306_2}
        &T^\ell\int_M |\nabla^\ell A|^2\d\mathrm{vol}\big\vert_{t=T} + \int_0^T t^\ell \int_M|\nabla^{\ell+1} A|^2\d\mathrm{vol}\d t \leq C(n)\varepsilon_0.
    \end{align}
    To this end we introduce a cutoff function: 
    Let $R>0$, $x_0\in\R^n$, and $\tilde\gamma \in C_c^\infty(\R^n)$ such that $\chi_{B_R(x_0)}\leq \tilde
    \gamma\leq \chi_{B_{2R}(x_0)}$, $\|\mathrm{D}\tilde\gamma\|_\infty\leq C/R$, and $\|\mathrm D^2 \tilde\gamma\|_\infty\leq C/R^2$, and define $\gamma\vcentcolon =\tilde\gamma\circ F$.
    
    We first prove \eqref{eq:260306_2} for $\ell=1$.
    Applying \Cref{lem:dt_nabla_ell_A} with $\ell=1$ and arguing as in the proof of \Cref{lem:2d-conv_1}, we have
    \begin{align}
        &\partial_t \int_M|\nabla A|^2\gamma^4\d\mathrm{vol} + 2\int_M|\nabla^2 A|^2 \gamma^4\d\mathrm{vol}
        \\
        &\leq C \Vert \mathrm{D}\tilde\gamma\Vert_\infty \int_M |\nabla^2 A| |\nabla A| \gamma^3\d\mathrm{vol} + C \int_M|A|^2|\nabla A|^2\gamma^4\d\mathrm{vol} \\
        & \quad + 4\int_M|\nabla A|^2 \gamma^3 \langle \mathrm{D}\tilde\gamma\circ F, H\rangle\d\mathrm{vol} \\
        &\leq C \Vert \mathrm{D}\tilde\gamma\Vert_\infty^2 \int_M |\nabla A|^2 \gamma^2\d\mathrm{vol} + C \int_M|A|^2|\nabla A|^2\gamma^4\d\mathrm{vol} + \frac12 \int_M|\nabla^2 A|^2 \gamma^4\d\mathrm{vol}.
    \end{align}
    We estimate the second term using \Cref{lem:Michael-Simon-2D}, yielding
    \begin{align}
        &\int_M|A|^2|\nabla A|^2 \gamma^4\d\mathrm{vol} \ \\
        &\leq c \Big(\int_M (|\nabla A||\nabla A|\gamma^2 + |A| |\nabla^2 A| \gamma^2 + 2|A| |\nabla A| \gamma |\nabla\gamma|\d\mathrm{vol}\Big)^2\\
        &\leq 3c \int_{[\gamma>0]} |\nabla A|^2\d\mathrm{vol} \int_M |\nabla A|^2 \gamma^4\d\mathrm{vol}+ 3c\int_{[\gamma>0]}|A|^2\d\mathrm{vol} \int_M |\nabla^2 A|^2 \gamma^4\d\mathrm{vol} \\
        &\quad + 3\cdot 4 c \Vert \mathrm{D}\tilde\gamma\Vert_\infty^2\int_{[\gamma>0]}|A|^2\d\mathrm{vol} \int_M |\nabla A|^2\gamma^2\d\mathrm{vol}.
    \end{align}
    Using \eqref{eq:20-12-1}, reducing $\varepsilon_0>0$ if necessary, and absorbing, we conclude that
     \begin{align}
        &\partial_t \int_M|\nabla A|^2\gamma^4\d\mathrm{vol} + \int_M|\nabla^2 A|^2 \gamma^4\d\mathrm{vol}
        \\
        &\leq C R^{-2} \int_M  |\nabla A|^2 \gamma^2\d\mathrm{vol} + C \int_{[\gamma>0]} |\nabla A|^2\d\mathrm{vol} \int_M |\nabla A|^2 \gamma^4\d\mathrm{vol}.\label{eq:20-12-2}
    \end{align}
    Now, due to \eqref{eq:20-12-1}, there exists a sequence $t_j\to 0^+$ such that
    \begin{align}\label{eq:05-03-26_03}
        \lim_{j\to\infty} t_j \int_M|\nabla A|^2\d\mathrm{vol}\big\vert_{t=t_j} =0.
    \end{align}
    Multiplying \eqref{eq:20-12-2} by $t$, and integrating on $[t_j,T]$ for $T<T_{\mathrm{max}}$, we find
    \begin{align}
        &T\int_M|\nabla A|^2 \gamma^4\d\mathrm{vol}\big\vert_{t=T} + \int_{t_j}^T t \int_M|\nabla^2 A|^2 \gamma^4\d\mathrm{vol}\d t \\
        &\leq C \int_{t_j}^T \Big(\int_{[\gamma>0]}|\nabla A|^2\d\mathrm{vol}+R^{-2}\Big)t\int_M|\nabla A|^2 \gamma^2\d\mathrm{vol} \d t \\
        &\quad + \underbrace{t_j\int_M|\nabla A|^2\d\mathrm{vol}\big\vert_{t=t_j}}_{=:I_1} + \underbrace{\int_{t_j}^T\int_M|\nabla A|^2\d\mathrm{vol}\d t}_{=:I_2} .\label{eq:05-03-26_02}
    \end{align}As in \eqref{eq:def_kappa}, using \Cref{lem:uniform_bounded_local_volume_new} and \eqref{eq:05-03-26}, we find that for all $R>0$ the function
    \begin{align}
        \kappa_1(T,R)&\vcentcolon= \sup_{x_0\in\R^n} \left( T\int_{B_R(x_0)}|\nabla A|^2 \d\mu_T + \int_{t_j}^T t \int_{B_R(x_0)}|\nabla^2 A|^2\d\mu_t\d t \right)
    \end{align}
    is lower semicontinuous and locally bounded with respect to $T\in [t_j,T_{\mathrm{max}})$. 
    As in  \eqref{eq:02-02-26_2}--\eqref{eq:kappa_ineq}, estimate \eqref{eq:05-03-26_02} and a covering argument  yield that for all $T\in [t_j, T_{\mathrm{max}})$ we have
    \begin{align}
        \kappa_1(T,R) \leq I_1+I_2 + C\Gamma\int_{t_j}^T \Big( \int_M|\nabla A|^2\d\mathrm{vol}+R^{-2} \Big)\kappa_1(t,R) \d t.\label{eq:05-03-26_04}
    \end{align}
    Since $I_1$ is independent of $T$ while $I_2$ is increasing in $T$ and finite-valued (by \eqref{eq:20-12-1}), we can apply Gr\"onwall's inequality to conclude that for all $T\in [t_j, T_{\mathrm{max}})$
    \begin{align}
        \kappa_1(T,R) &\leq \big( I_1+I_2 \big) \exp\bigg( C \Gamma \int_{t_j}^T \Big(\int_M|\nabla A|^2\d\mathrm{vol}+R^{-2} \Big)\d t \bigg)\\
        &= \big( I_1+I_2 \big) \exp \bigg( C \Gamma \Big( I_2+R^{-2}(T-t_j) \Big) \bigg).\label{eq:02-02_1}
    \end{align}
    Noting that $I_2 \leq C\varepsilon_0$ by \eqref{eq:20-12-1}, sending $R\to\infty$ and $j\to\infty$, which yields $I_1\to0$ by \eqref{eq:05-03-26_03}, and using $\varepsilon_0<1$, we obtain \eqref{eq:260306_2} with $\ell=1$ for all $T\in [0,T_{\mathrm{max}})$.
    
    Next, we prove \eqref{eq:260306_2} for $\ell=2$.
    We use \Cref{lem:dt_nabla_ell_A} with $\ell=2$ to find
    \begin{align}
        (\partial_t -\Delta)|\nabla^2 A|^2 + 2 |\nabla^3 A|^2 \leq C |A|^2|\nabla^2 A|^2 + C |A| |\nabla A|^2|\nabla^2A|,
    \end{align}
    so that we have
    \begin{align}
        &\partial_t \int_M|\nabla^2 A|^2 \gamma^6\d\mathrm{vol} + 2 \int_M|\nabla^3 A|^2 \gamma^6\d\mathrm{vol} \\
        &\leq C\int_M|A|^2 |\nabla^2A|^2 \gamma^6\d\mathrm{vol}+ C \int_M|\nabla A|^4 \gamma^6\d\mathrm{vol} \\
        &\quad + C \Vert \mathrm{D}\tilde\gamma\Vert_\infty \int_M|\nabla^3 A||\nabla^2 A|\gamma^5\d\mathrm{vol} +6 \int_M|\nabla^2 A|^2 \gamma^5 \langle \mathrm{D}\tilde\gamma\circ F, H\rangle\d\mathrm{vol} \\
         &\leq C\int_M|A|^2 |\nabla^2A|^2 \gamma^6\d\mathrm{vol}+ C \int_M|\nabla A|^4 \gamma^6\d\mathrm{vol} \\
        &\quad + C \Vert \mathrm{D}\tilde\gamma\Vert_\infty^2 \int_M|\nabla^2 A|^2\gamma^4\d\mathrm{vol}+ \frac{1}{2}\int_M|\nabla^3 A|^2 \gamma^6\d\mathrm{vol}.
    \end{align}
    We now estimate the first term using \Cref{lem:Michael-Simon-2D} by
    \begin{align}
        &\int_M|A|^2 |\nabla^2A|^2\gamma^6\d\mathrm{vol}\\
        &\leq c\Big(\int_M|\nabla A||\nabla^2 A|\gamma^3\d\mathrm{vol} + \int_M|A| |\nabla^3 A| \gamma^3\d\mathrm{vol} + 3\Vert \mathrm{D}\tilde\gamma\Vert_\infty\int_M|A||\nabla^2 A| \gamma^2\d\mathrm{vol} \Big)^2 \\
        &\leq 3c \int_M|\nabla A|^2\d\mathrm{vol}\int_M |\nabla^2 A|^2 \gamma^6\d\mathrm{vol} + 3c \int_M|A|^2\d\mathrm{vol} \int_M|\nabla^3 A|^2 \gamma^6\d\mathrm{vol} \\
        &\quad + 3\cdot 9 c\Vert \mathrm{D}\tilde\gamma\Vert_\infty^2 \int_M|A|^2\d\mathrm{vol} \int_M|\nabla^2 A|^2 \gamma^4\d\mathrm{vol}.
    \end{align}
    For the second term, again using \Cref{lem:Michael-Simon-2D}, we have
    \begin{align}
        &\int_M|\nabla A|^4 \gamma^6\d\mathrm{vol} \\
        &\leq c\Big( 2 \int_M |\nabla^2 A| |\nabla A| \gamma^3\d\mathrm{vol} + 3\Vert \mathrm{D}\tilde\gamma \Vert_\infty \int_M|\nabla A|^2 \gamma^2\d\mathrm{vol}\Big)^2 \\
       &\leq  2 c\int_M|\nabla A|^2\d\mathrm{vol}\int_M|\nabla^2 A|^2 \gamma^6\d\mathrm{vol} + 2\cdot 9 c\Vert \mathrm{D}\tilde\gamma\Vert_\infty^2 \Big(\int_M|\nabla A|^2 \gamma^2\d\mathrm{vol}\Big)^2,
    \end{align}
    and using integration by parts, the last term is further estimated by
    \begin{align}
        & \Big(\int_M|\nabla A|^2 \gamma^2\d\mathrm{vol}\Big)^2 \\
        &\leq \int_M|A|^2\d\mathrm{vol} \int_M|\Delta A|^2 \gamma^4\d\mathrm{vol} + C \Vert \mathrm{D}\tilde\gamma\Vert_\infty^2\int_M|A|^2\d\mathrm{vol}\int_M|\nabla A|^2 \gamma^2\d\mathrm{vol}.
    \end{align}
    As $|\Delta A|^2\leq C|\nabla^2 A|^2$, after absorbing, we thus conclude that
    \begin{align}
        &\partial_t \int_M|\nabla^2 A|^2 \gamma^6\d\mathrm{vol} + \int_M|\nabla^3 A|^2 \gamma^6\d\mathrm{vol}\\
        &\leq C\Big(\int_M|\nabla A|^2\d\mathrm{vol}+ R^{-2}\Big)\int_M|\nabla^2 A|^2 \gamma^4\d\mathrm{vol} + CR^{-4} \int_M|\nabla A|^2 \gamma^2\d\mathrm{vol}.\label{eq:05-03-26_05}
    \end{align}
    As in \eqref{eq:05-03-26_03}, estimate \eqref{eq:260306_2} with $\ell=1$ implies that there exist $t_j\to 0^+$ such that
    \begin{align}\label{eq:260306_03}
        \lim_{t_j\to0^+} t_j^2 \int_M|\nabla^2 A|^2\d\mathrm{vol}\big\vert_{t=t_j}=0.
    \end{align}
    Multiplication of \eqref{eq:05-03-26_05} with $t^2$ and integration on $[t_j, T]$ yield
    \begin{align}
        &T^2 \int_M|\nabla^2 A|^2\gamma^6\d\mathrm{vol}\big\vert_{t=T} + \int_{t_j}^T t^2 \int_M|\nabla^3 A|^2 \gamma^6\d\mathrm{vol}\d t\\
        & \leq C \int_{t_j}^T \Big(\int_M|\nabla A|^2\d\mathrm{vol} + R^{-2}\Big)t^2 \int_M|\nabla^2 A|^2 \gamma^4\d\mathrm{vol}\d t \\
        &\quad + \underbrace{t_j^2 \int_M|\nabla^2 A|^2\d\mathrm{vol}\big\vert_{t=t_j}}_{=:I_3} + \underbrace{\int_{t_j}^T 2t\int|\nabla^2A|^2\d\mathrm{vol}\d t}_{=:I_4} + \underbrace{\frac{C}{R^{4}} \int_{t_j}^T t^2 \int_M|\nabla A|^2\d\mathrm{vol}\d t}_{=:I_5}.
    \end{align}
    We may now argue as in \eqref{eq:05-03-26_02}--\eqref{eq:02-02_1}: we apply Gr\"onwall's inequality to
    \begin{align}
        \kappa_2(T,R)&\vcentcolon= \sup_{x_0\in\R^n} \left( T^2\int_{B_R(x_0)}|\nabla^2 A|^2 \d\mu_T + \int_{t_j}^T t^2 \int_{B_R(x_0)}|\nabla^3 A|^2\d\mu_t\d t \right)
    \end{align}
    to conclude that
    \begin{align}
        \kappa_2(T,R) \leq (I_3+I_4+I_5) \exp\bigg( C\Gamma  \Big(I_2+R^{-2}(T-t_j)\Big)\bigg).
    \end{align}
    Note that $I_4\leq C(n)\varepsilon_0$ by \eqref{eq:260306_2} with $\ell=1$, and that the integral in $I_5$ is finite due to \eqref{eq:20-12-1}, so that $I_5\to0$ as $R\to\infty$.
    Recalling $I_2\leq C\varepsilon_0$, sending first $R\to\infty$ and then $j\to\infty$, which yields $I_3\to0$ by \eqref{eq:260306_03}, we finally obtain \eqref{eq:260306_2} with $\ell=2$. 

    Now, for $R\geq 1$, the $C^2$-norm of $\tilde\gamma$ is bounded by a uniform constant. Hence, by interpolation \cite[(4.9)]{MR1900754}, reducing $\varepsilon_0>0$, we have
    \begin{align}\label{eq:KS_Linfty}
        \Vert A\Vert_{L^\infty([\gamma=1])}^4\leq C \int_{[\gamma>0]}|A|^2\d\mathrm{vol}\Big(\int_{[\gamma>0]}|\nabla^2 A|^2\d\mathrm{vol}+ \int_{[\gamma>0]}|A|^2\d\mathrm{vol}\Big).
    \end{align}
    Sending $R\to\infty$, and using \eqref{eq:20-12-1} and \eqref{eq:260306_2} with $\ell=2$, we conclude that 
    \begin{align}\label{eq:05-02-26_4}
        \sup_{M\times [0,T_{\mathrm{max}})} |A|<\infty.
    \end{align}
    We now apply Ecker--Huisken's interior estimates \cite[Theorem 3.4 and Remark 3.6 (ii)]{MR1117150}. 
    Note that even though this result is only stated for $n=m+1$, essentially the same proof also works in higher codimensions. Thus, for all $\delta>0$, \eqref{eq:05-02-26_4} implies
    \begin{align}\label{eq:05-02-26}
        \sup_{M\times [\delta,T_{\mathrm{max}})}|\nabla^\ell A|<\infty\quad \text{for all }\ell\in\N_0.
    \end{align}        
    If $T_{\mathrm{max}}<\infty$, short-time existence thus allows us to extend the flow past $T_{\mathrm{max}}$. This contradicts maximality, so $T_{\mathrm{max}}=\infty$.
    
    For the subconvergence, we observe $\int_M|A|^2\d\mathrm{vol} <\varepsilon_0<8\pi$.       
    By \cite[Theorem 4.3.1]{MR1366547}, $F_t$ is embedded and admits a conformal reparametrization $\tilde{F}_t\colon \C\to\R^n$ with
    \begin{align}
        e^{-2 c(n,\varepsilon_0)}|z_1-z_2|\leq |\tilde{F}_t(z_1)-\tilde{F}_t(z_2)|\leq e^{c(n, \varepsilon_0)}|z_1-z_2|\qquad \text{for all } z_1,z_2\in \mathbf{C},
    \end{align}
    and the conformal factor $u_t$ satisfies
    \begin{align}\label{eq:conf_factor_MS}
        \Vert u_t \Vert_{\infty}\leq c(n, \varepsilon_0).
    \end{align}
    Comparing coordinate and covariant derivatives as in \cite[Proof of Theorem 1.2]{MR1900754}, and arguing by induction, \eqref{eq:05-02-26} implies that
    \begin{align}
        \sup_{t\geq 1}\Vert \mathrm D^\ell \tilde{F}_{t}\Vert_{\infty}<\infty
    \end{align}
    for all $\ell\in \N$. 
    Fix any sequences $t_j\to\infty$ and $\{z_j\}\subset \C$. 
    After passing to a subsequence (without relabeling), we find that there is some conformal embedding $F_\infty:\C\to\R^n$ with $F_\infty(0)=0$ such that
    \begin{align}\label{eq:260306_1}
        \tilde F_{t_j}(\cdot +z_j) - \tilde F_{t_j}(z_j) \to F_\infty \qquad (j\to\infty)
    \end{align}
    locally smoothly. Sending $T=t_j \to \infty$ in \eqref{eq:260306_2} with $\ell=1$ implies that for $F_\infty$ we have $\nabla A\equiv 0$. With \Cref{lem:Michael-Simon-2D} we find that $F_\infty$ satisfies
    \begin{align}\label{eq:02-03-26}
        \int_{\C}|A|^4 \gamma^2\d\mathrm{vol}\leq c\Vert \mathrm{D}\tilde\gamma\Vert_\infty^2\Big(\int_{\C}|A|^2\d\mathrm{vol}\Big)^2 \to 0
    \end{align}
    as $R\to\infty$, and thus $A\equiv 0$. Hence, $F_\infty$ is an embedded plane.

    For \eqref{eq:24-04-26_03}, suppose there exists $\ell\in \N_0$, $\delta>0$, $z_j\in \C$ and $t_j\to\infty$ such that 
    \begin{align}
        |\nabla^\ell A_{\tilde F_{t_j}}(z_j)|\geq \delta.
    \end{align}
    Taking this specific $z_j\in \C$ in \eqref{eq:260306_1}, the locally smooth convergence after translation implies that $\tilde F_{t_j}(\cdot + z_j)\to F_\infty$ smoothly on $B_1(0)$ where $F_\infty$ is a plane, a contradiction.
    
    Lastly, estimate \eqref{eq:05-02-26_2} follows from \eqref{eq:20-12-1} and \eqref{eq:260306_2}.    
\end{proof}

We are now ready to prove \Cref{thm:2d-conv}.

\begin{proof}[Proof of \Cref{thm:2d-conv}]
   First, by \Cref{lem:MS_GB} and \Cref{rem:V_by_monotonicity}, we find that $F_0$ is proper and has uniformly finite local volume. For $T<\infty$, let $\Lambda \vcentcolon = \sup_{M\times [0,T]}|H|<\infty$. Since $F$ is a purely normal mean curvature flow, we have $F_t^{-1}(B_R(0)) \subset F_0^{-1}(B_{R+\Lambda T}(0))$ for all $R>0, t\in [0,T]$. In particular, this implies that $F\colon M\times[0,\infty)\to\R^n$ is a proper mean curvature flow. The monotonicity of the volume form $\partial_t(\d\mathrm{vol})=-|H|^2\d\mathrm{vol}$ further implies
    \begin{align}
        \mu_t(B_r(0))\leq \mu_0(B_{r+\Lambda T}(0)).
    \end{align}
   We conclude
    \begin{align}
        \lim_{r\to\infty} \frac{\mu_t(B_r(0))}{\pi r^2} \leq \lim_{r\to\infty} \frac{\mu_0(B_{r+\Lambda T}(0))}{\pi r^2} = \lim_{r\to\infty}\frac{\mu_0(B_{r}(0))}{\pi r^2}=1.
    \end{align}
    Using that $\chi(\R^2)=1$, from \Cref{lem:2d-conv_3} and \Cref{lem:MS_GB} we obtain
    \begin{align}\label{eq:19-12-2}
        \int_M K\d\mathrm{vol} = 0 \quad \text{ for all }t\geq 0.
    \end{align}    
    Since $|H|^2 = |A|^2+2K$, \Cref{cor:main_calibration_detailed} and \eqref{eq:19-12-2} imply
    \begin{align}\label{eq:AL2L2}
        \int_0^\infty \int_M|A|^2\d\mathrm{vol}\d t \leq \E_\phi[F_0].
    \end{align}
    Hence, there exists $t_0\in (0,\infty)$ such that $  \int_M|A|^2\d\mathrm{vol}\vert_{t_0} <\varepsilon_0$.  
    Moreover, applying Ecker--Huisken's interior estimates \cite[Theorem 3.4 and Remark 3.6 (ii)]{MR1117150} (extended to the case of any codimension) as in \eqref{eq:05-02-26}, we find that $F_{t_0}$ has bounded geometry, i.e., \eqref{eq:bounded_geometry} holds.  
    Consequently, \Cref{thm:2D_small_energy} applies and \eqref{eq:02-03-26_2} follows.

    Suppose now that $\phi$ has constant coefficients. Note that any plane $F_\infty$ that may be obtained along a subsequence $t_j\to\infty$ and $\{z_j\}\subset \C$ as in \eqref{eq:260306_1} satisfies
    \begin{align}\label{eq:24-04-26_2}
        \E_\phi[F_\infty]\leq \liminf_{j\to\infty}\E_\phi[\tilde F_{t_j}]\leq \E_\phi[F_0]<\infty,
    \end{align}
    where $\tilde F_t$ is as in the proof of \Cref{thm:2D_small_energy}. Since $\tau_{F_\infty}$ is constant, we conclude that $F_\infty$ is $\phi$-calibrated.  
    To prove \eqref{eq:26-05_01}, suppose that there exist $\delta>0$, $t_j\to \infty$, and $z_j\in \C$ such that 
    \begin{align}\label{eq:24-04-26_02}
        1-\phi(\tau_{\tilde{F}_{t_j}})(z_j)\geq \delta.
    \end{align}   
    Taking this specific $z_j\in \C$ in \eqref{eq:260306_1}, the locally smooth convergence after translation implies that $\tau_{\tilde F_{t_j}(\cdot + z_j)}\to \tau_{F_\infty}$ uniformly in $B_1(0)$. Since any such $F_\infty$ must be $\phi$-calibrated, we obtain a contradiction to \eqref{eq:24-04-26_02}.  
    
    Finally, suppose $G(\phi)=\{ E\}$, where we assume, without loss of generality, that $E=e_1\wedge e_2$. Since $\phi$ has constant coefficients, \eqref{eq:26-05_01} implies $\lim_{t\to\infty}\Vert\tau-E\Vert_\infty=0$.    
    Let $\pi_E\colon \R^n \to \C$ be the canonical projection onto $E\cong \C$ and  $L_t \vcentcolon=\pi_E \D \tilde{F}_t (0)$. Writing $L_t z = a_t z+ b_t \bar z$, $z\in\C$, for some $a_t,b_t\in\C$, we infer from \eqref{eq:conf_factor_MS} and $\tau(0,t)\to E$ that $|a_t|\geq \delta>0$ for all $t$ large enough and further that $b_t\to 0$; hence, for $t$ sufficiently large, $L_t$ is an orientation-preserving $\R$-linear isomorphism.
    Therefore, $L_t^{-1} = a_t^{-1}\mathrm{Id}_{\C}+o(1)$ as $t\to\infty$. Replacing $\tilde F_t(z)$ by $\tilde F_t(a_t^{-1} z)$ in \eqref{eq:260306_1}, we conclude that the limit $F_\infty$ satisfies $F_\infty(0)=0$ and $\pi_E\D F_\infty(0) = \mathrm{Id}_{\C}$. Together with the fact that $F_\infty$ is a conformal parametrization of $E$, we obtain $F_\infty(z) = (z, 0)\in \C\times\R^{n-2} \cong \R^n$. 
    By the uniqueness of $F_\infty$ we can upgrade the subconvergence to full convergence. 
\end{proof}

\subsection{A gap theorem for two-dimensional self-shrinkers}

Finally we discuss applications to self-shrinkers.
In view of scaling, our energy identity does not yield as strong rigidity properties as in the nonshrinking case.
However, we still obtain some nontrivial new conditions.

An immersion $F\colon M^m\to\R^n$ is called a \emph{self-shrinker} if $F_t\vcentcolon =\sqrt{1-2t}\,F$ defines a mean curvature flow for $0\leq t <\frac12$; equivalently,
\begin{equation}\label{eq:self-shrinker}
    F^\perp = -H.
\end{equation}
Self-shrinkers arise as blow-up profiles of type I singularities of the mean curvature flow \cite{MR1030675}.

\begin{proposition}\label{prop:self-shrinker_Pohozaev}
    Let $F\colon M^m\to\R^n$ be a proper oriented self-shrinker with $\E_\phi[F]<\infty$ for a dilation-invariant coflat calibration $\phi$ of degree $m$ on $\R^n$.
    Then
    \begin{equation}
        \E_\phi[F] = \frac{1}{m}\int_M|H|^2\d\mathrm{vol}.
    \end{equation}
\end{proposition}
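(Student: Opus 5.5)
Run the self-shrinking flow $F_t\vcentcolon=\sqrt{1-2t}\,F$, $0\le t<\tfrac12$, and compare two formulas for $\E_\phi[F_t]$: one coming from scaling, the other from the energy identity of \Cref{thm:main_calibration_detailed}. Differentiating both at $t=0$ should give the Pohozaev-type identity.

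First, the scaling side. Since $\phi$ is dilation-invariant and coflat, the scaling property \eqref{eq:scaling} still holds. Indeed, $\tau_{\lambda F}=\tau_F$ and $\mathrm{vol}_{\lambda F}=\lambda^m\mathrm{vol}_F$, while $\phi_{\lambda x}=\phi_x$ for $\mathcal H^m$-a.e.\ $x$; the exceptional set is $\mu$-null because $\mu_F\ll\mathcal H^m$. Hence
\[
\E_\phi[F_t]=(1-2t)^{\frac m2}\E_\phi[F]<\infty .
\]
The same scaling gives $\int_M|H_{F_t}|^2\d\mathrm{vol}_{F_t}=(1-2t)^{\frac m2-1}\int_M|H|^2\d\mathrm{vol}$, because $|H|$ scales like $\lambda^{-1}$.

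Second, the dissipation side. Apply \Cref{thm:main_calibration_detailed} on $[0,T']$ for any $T'<\tfrac12$ with $U=\R^n$; the initial datum is smooth, so the trace hypothesis is immediate. Hypothesis \eqref{eq:local_calibration_energy_bound} holds because $\sup_{x_0}\int_{B_1(x_0)}(1-\phi(\tau))\d\mu_t\le\E_\phi[F_t]\le\E_\phi[F]$. As in the self-expander case, no local volume bound is needed. The theorem then gives
\[
(1-2t)^{\frac m2}\E_\phi[F]+\int_0^t(1-2s)^{\frac m2-1}\d s\int_M|H|^2\d\mathrm{vol}=\E_\phi[F].
\]
Take $t$ small and positive. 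Since $\E_\phi[F]<\infty$, the dissipation integral is finite, so in particular $\int_M|H|^2\d\mathrm{vol}<\infty$. Now divide by $t$ and let $t\to0^+$, or simply compute both $s$-integrals explicitly. The term $1-(1-2t)^{m/2}$ behaves like $mt$, and the time integral behaves like $t$. This gives $m\,\E_\phi[F]=\int_M|H|^2\d\mathrm{vol}$, which is the claimed identity.

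The only delicate point is checking the hypotheses of \Cref{thm:main_calibration_detailed}: properness of the flow on compact time subintervals, which follows from properness of $F$, and the dilation-invariance of the coflat $\phi$ in the scaling step. Both are routine. There is no real obstacle beyond making sure the finiteness of $\int|H|^2$ is deduced from the identity rather than assumed.
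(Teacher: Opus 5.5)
Your proposal is correct and follows the paper's proof essentially verbatim: apply \Cref{thm:main_calibration_detailed} to $F_t=\sqrt{1-2t}\,F$, substitute the scaling laws $\E_\phi[F_t]=(1-2t)^{m/2}\E_\phi[F]$ and $|H_{F_t}|^2\d\mathrm{vol}_{F_t}=(1-2t)^{\frac{m-2}{2}}|H_F|^2\d\mathrm{vol}_F$ into the energy identity, and divide by $1-(1-2t)^{m/2}$. Your extra remarks are correct details that the paper leaves implicit: the $\mu_F\ll\mathcal H^m$ justification of the scaling law for a dilation-invariant coflat $\phi$, and deducing $\int_M|H|^2\d\mathrm{vol}<\infty$ from the identity itself.
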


\begin{proof}
    As in the proof of \Cref{thm:self_expander}, \Cref{thm:main_calibration_detailed} also applies to the corresponding flow $\{F_t\}$.
    Substituting the scaling properties $\E_\phi[F_t]=(1-2t)^{\frac{m}{2}}\E_\phi[F]$, cf.\ \eqref{eq:scaling}, and $|H_{F_t}|^2\d\mathrm{vol}_{F_t}=(1-2t)^{\frac{m-2}{2}}|H_F|^2\d\mathrm{vol}_{F}$ into \eqref{eq:energy_identity}, we deduce that, for $0\leq t<\frac{1}{2}$,
    \begin{align}
        \E_\phi[F] &= (1-2t)^{\frac{m}{2}}\E_\phi[F] + \int_0^t\int_M(1-2t')^{\frac{m-2}{2}}|H_F|^2\d\mathrm{vol}\d t' \\&= (1-2t)^{\frac{m}{2}}\E_\phi[F] + \frac{1-(1-2t)^{\frac{m}{2}}}{m}\int_M|H_F|^2\d\mathrm{vol}.
    \end{align}
    Rearranging and dividing by $1-(1-2t)^{\frac{m}{2}}$ yield the desired identity.
\end{proof}

This may be regarded as a Pohozaev-type identity, i.e., an integral constraint due to a scale variation arising from the partial differential equation.

As an application we obtain a gap theorem for two-dimensional, possibly noncompact self-shrinkers.

\begin{theorem}\label{thm:shrinker_2D}
    There is a universal constant $\varepsilon_0>0$ with the following property:
    Let $F\colon M^2\to\R^n$ be a connected, proper oriented self-shrinker with $\sup_M|A|<\infty$ and $\int_M K \d\mathrm{vol} = 0$ (understood as an improper integral if $M$ is noncompact).
    If
    \[
    \E_\phi[F]<\varepsilon_0
    \]
    for a dilation-invariant coflat calibration $\phi$ of degree $2$ on $\R^n$, then $F$ is a plane.
\end{theorem}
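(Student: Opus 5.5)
Since $\sup_M|A|<\infty$, the shrinker equation gives $|F^\perp|=|H|\le C$, and together with the finite calibration energy this yields enough local volume control to apply \Cref{prop:self-shrinker_Pohozaev}. With $m=2$ it reads
\[
\int_M|H|^2\d\mathrm{vol} = 2\,\E_\phi[F] < 2\varepsilon_0.
\]
Next we use the identity $|H|^2=|A|^2+2K$ valid for surfaces in any codimension. Integrating over an exhaustion of $M$ and using $\int_M K\d\mathrm{vol}=0$, understood as an improper limit, we obtain
\[
\int_M|A|^2\d\mathrm{vol}=\int_M|H|^2\d\mathrm{vol}<2\varepsilon_0 .
\]
This step needs care in the noncompact case. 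We take the exhaustion to be $F^{-1}(B_r(0))$; these sets are compact by properness. We then pass to the limit, using that $|H|^2$ is integrable and that $|A|^2\ge0$ converges monotonically.

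Now the surface has small total curvature, and we want a rigidity statement. The most direct route uses the flow $F_t=\sqrt{1-2t}\,F$. Since $\int|A|^2\d\mathrm{vol}$ is scale invariant in dimension two, the flow keeps $\int_M|A|^2\d\mathrm{vol}<2\varepsilon_0$ for all $t\in[0,\tfrac12)$. At the same time $\sup|A_{F_t}|=(1-2t)^{-1/2}\sup|A_F|$ blows up as $t\to\tfrac12$ unless $A\equiv0$.

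On the other hand, $F$ is proper with bounded $|A|$. If $F$ is complete, \Cref{lem:MS_GB} and \Cref{rem:V_by_monotonicity} give uniformly finite local volume. We then apply \Cref{lem:2d-conv_3} and the small-energy estimates from the proof of \Cref{thm:2D_small_energy}, in particular \eqref{eq:KS_Linfty} together with \eqref{eq:05-02-26_2}, starting at time $0$. This requires $2\varepsilon_0$ to be below the threshold in those results. The conclusion is a uniform bound on $\sup_M|A_{F_t}|$ for $t\in[\delta,\tfrac12)$. Heuristically, the bound should even decay like $C\varepsilon_0^{1/2}t^{-1/2}$.

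Comparing the two behaviours, the scaling blow-up at $t\to\tfrac12$ contradicts the uniform bound unless $A\equiv0$. Hence $F$ is totally geodesic. A connected complete totally geodesic shrinker is a plane, and $F^\perp=-H=0$ forces it to pass through the origin.

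The main obstacle is making the small-energy interior estimates of \Cref{thm:2D_small_energy} uniform up to $t=\tfrac12$, that is, with constants independent of the existence time. If that fails, the fallback is a direct static argument on $F$ itself. We would combine the Simons-type inequality $\Delta|A|^2\ge 2|\nabla A|^2-C|A|^4+\ldots$, adapted to the drift Laplacian of shrinkers, with the Michael--Simon inequality of \Cref{lem:Michael-Simon-2D} applied with cutoffs. Using $\int|A|^2<2\varepsilon_0$ we would absorb the quartic term, deduce $\nabla A\equiv0$ and then $A\equiv0$, as in \eqref{eq:02-03-26}. The compact case is handled separately: there $\E_\phi[F]$ equals the area, and the Pohozaev identity together with the standard area lower bound for closed shrinkers (the entropy of a closed shrinker is at least that of a plane) rules it out for small $\varepsilon_0$.
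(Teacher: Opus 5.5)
Your proposal is correct and follows essentially the paper's proof. The Pohozaev identity together with $\int_M K\d\mathrm{vol}=0$ gives $\int_M|A|^2\d\mathrm{vol}=2\E_\phi[F]$ small. The two-dimensional small-energy theory then keeps $|A|$ bounded up to $t=\frac12$ along the purely normal reparametrization of the shrinking flow, which contradicts the self-similar blow-up unless $A\equiv0$; the closed case is excluded separately by an area/Willmore lower bound.

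The paper invokes \Cref{thm:2D_small_energy} through its uniqueness part, after getting bounded geometry from Ecker--Huisken and self-similarity, rather than rerunning its a priori estimates as you do. The obstacle you flag is not one: \eqref{eq:05-02-26_2} and \eqref{eq:KS_Linfty} already hold with time-independent constants.
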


\begin{proof}
    It is easy to deduce from \Cref{prop:self-shrinker_Pohozaev} and the classical Willmore inequality that there are no closed self-shrinkers with small $\E_\phi$ (or equivalently small volume).
    Hence we only discuss the case that $M$ is noncompact.
    
    By \Cref{prop:self-shrinker_Pohozaev} and $\int_M K \d\mathrm{vol} = 0$, we have
    \begin{equation}\label{eq:calibration_energy_total_curvature}
        \E_\phi[F] = \frac{1}{2}\int_M|H|^2\d\mathrm{vol} = \frac{1}{2}\int_M|A|^2\d\mathrm{vol}.
    \end{equation}
    In particular, we may assume that $F$ has finite total curvature.
    By \Cref{rem:V_by_monotonicity}, $F$ also has uniformly finite local volume.
    In addition, by \Cref{lem:MS_GB}, we have $M\cong \R^2$.
    
    We now write $\hat{F}_t$ for the reparametrization of $F_t=\sqrt{1-2t}\,F$ with purely normal velocity. Hence $\hat F(\cdot,t)\vcentcolon =\hat F_t$, $t\in [0,\frac12)$, defines a purely normal mean curvature flow.
    As in the previous section, using Ecker--Huisken's interior estimates, we find that $\hat{F}_t$ has bounded geometry \eqref{eq:bounded_geometry} for $t>0$.
    By self-similarity, $\hat{F}_0$ ($=F$) also satisfies \eqref{eq:bounded_geometry}.
    Now, if we take the constant $\varepsilon_0$ sufficiently small, then by \eqref{eq:calibration_energy_total_curvature} the total curvature of $\hat{F}_0$ is also small so that \Cref{thm:2D_small_energy} applies.
    In particular, the flow $\hat{F}$ has bounded second fundamental form on $M\times[0,\frac12)$ (and exists immortally).   
    By the self-shrinking property, this occurs only when $\hat{F}_0$ satisfies $|A|\equiv0$.
    Hence $F$ is a plane.
\end{proof}

Our gap theorem is different in nature from existing gap theorems for self-shrinkers involving pointwise curvature bounds.
Le--Sesum \cite{MR2880211} proved, in codimension one, that a complete embedded self-shrinker with polynomial volume growth and $|A|^2<1$ is a hyperplane, using an identity of Colding--Minicozzi \cite{MR2993752}; Cao--Li \cite{MR3018176} extended this result to arbitrary codimension.
By contrast, our assumption is of integral type in terms of the calibration energy.

\appendix
\section{Finite calibration energy}\label{sec:finite_energy}

In this appendix, we discuss in more detail the class of smooth immersions with finite smooth-calibration energy, clarifying the geometric meaning and scope of this condition in several important settings. 
We first address the general situation, and then turn to constant-coefficient calibrations and some specific cases. Throughout this section, all immersions are understood to be oriented.

\subsection{General calibrations}

We first observe that under some regularity assumptions, finite calibration energy implies pointwise convergence $\phi(\tau)\to1$ at infinity.

\begin{proposition}\label{prop:asymptotic_calibration_bounded_A}
    Let $U\subset\R^n$ be open, let $\phi\in\Omega^m(U)$ be a calibration, and let $F\colon M^m\to U$ be a complete immersion.
    Assume that
    \[
    \sup_M |A|<\infty,
    \qquad
    \E_\phi[F]<\infty,
    \]
    and that $\phi$ has Lipschitz coefficients on $F(M)$.
    Then for every $\varepsilon>0$ there exists a compact set $K_\varepsilon\subset M$ such that
    \[
    1-\phi(\tau) \le \varepsilon
    \qquad\text{on }M\setminus K_\varepsilon.
    \]
\end{proposition}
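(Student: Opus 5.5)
We argue by contradiction, using a uniform Lipschitz bound for the integrand together with a uniform lower bound on the volume of small intrinsic balls. Suppose the conclusion fails. Then there exist $\varepsilon>0$ and points $p_j\in M$ leaving every compact set with $1-\phi(\tau)(p_j)>\varepsilon$. Since $M$ is complete, passing to a subsequence we may assume that the intrinsic geodesic balls $B^M_r(p_j)$ are pairwise disjoint for any fixed $r>0$: choose the $p_j$ inductively with $\mathrm{dist}_M(p_{j},\{p_1,\dots,p_{j-1}\})>2r$, which is possible since closed bounded sets are compact.

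\textbf{Step 1: Lipschitz control of the integrand.} Set $f\vcentcolon=1-\phi(\tau)\colon M\to[0,2]$. Along a unit-speed geodesic $c$ in $M$ we have $\frac{\d}{\d s}f(c(s))=-(\D_{\dot c}\phi)(\tau)-\phi(\nabla_{\dot c}\tau)$ (derivatives taken in the ambient sense). The first term is bounded by $m$ times the Lipschitz constant of the coefficients of $\phi$ on $F(M)$, since $|\dot c|=1$ as a vector in $\R^n$ and $F(c)$ stays in $F(M)$. For the second term, $\frac{\d}{\d s}\tau$ is a sum of $m$ terms in which one factor $\partial_iF$ is replaced by $A(\dot c,\partial_i)$. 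Each such term has norm at most $|A|$, so comass $1$ yields a bound $\le m\sup|A|$; to be safe, one can instead bound $|\phi(\xi)|\le C(m,n)|\xi|$ for general $m$-vectors using bounded coefficients. Hence $f$ is $L$-Lipschitz with respect to $\mathrm{dist}_M$, for some $L<\infty$.

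\textbf{Step 2: volume lower bound.} Choose $r\vcentcolon=\min\{\varepsilon/(2L),\,r_0\}$. Then $f\ge\varepsilon/2$ on $B^M_r(p_j)$. We need $\mathrm{vol}(B^M_r(p_j))\ge c>0$ uniformly in $j$, and this is the main point. With $\sup|A|\le\Lambda$, the Gauss equation gives a sectional curvature bound $|K|\le 2\Lambda^2$, but there is no injectivity radius bound. I would therefore work extrinsically. For $r_0\le 1/(2\Lambda)$, a standard graphical representation shows that a neighborhood of $p_j$ in $M$ is mapped by $F$ onto a graph over the tangent plane $\D F(T_{p_j}M)$ of a function with gradient bounded by $C\Lambda\rho$ on a disc of radius $\rho\sim r_0$, and this graph is contained in $B^M_r(p_j)$. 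Concretely, pull back through $\exp_{p_j}$, which is defined on all of $T_{p_j}M$ by completeness. On the ball of radius $\min\{r,\pi/\sqrt{2}\Lambda\}$ it is a local diffeomorphism with pulled-back metric comparable to Euclidean by Rauch comparison. Therefore $\mathrm{vol}(B^M_r(p_j))\ge$ the volume of the pulled-back ball, up to multiplicity $\ge 1$, and this is at least $c(m)r^m$. This is the Rauch/Bishop-type argument: $\exp$ is surjective onto $B^M_r$, and the area formula gives $\mathrm{vol}(B^M_r)\ge \int_{B_r}J\cdot(\text{counting})^{-1}$. To avoid the multiplicity issue, I would instead use the graph argument: $F$ restricted to a small neighborhood is an embedded graph, so the volume of the corresponding piece of $M$ is at least $\omega_m\rho^m$ with $\rho= c(m)\min\{r,1/\Lambda\}$.

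\textbf{Conclusion.} Disjointness gives $\E_\phi[F]\ge\sum_j\int_{B^M_r(p_j)}f\,\d\mathrm{vol}\ge\sum_j\frac{\varepsilon}{2}c\,r^m=\infty$, a contradiction. The expected obstacle is Step 2, namely the uniform volume bound without an injectivity radius assumption. The graphical-neighborhood lemma for immersions with bounded $|A|$ resolves it: a geodesic ball of radius $\sim 1/\Lambda$ in $M$ contains a sheet that is a Lipschitz graph over a disc of comparable radius.
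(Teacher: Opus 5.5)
Your argument is correct and has the same skeleton as the paper's proof. Both use a global Lipschitz bound for $1-\phi(\tau)$ coming from $|\nabla\tau|\le C|A|$ and the Lipschitz coefficients of $\phi$. Both pick well-separated points with pairwise disjoint intrinsic balls on which $1-\phi(\tau)\ge\varepsilon/2$, prove a uniform lower bound for the volume of these balls, and conclude that the resulting sum diverges.

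The genuine difference is how you obtain the uniform volume bound. The paper stays intrinsic. The Gauss equation bounds the sectional curvature, a known injectivity radius estimate for immersions with bounded second fundamental form (Chen--Yin) bounds $\mathrm{inj}(M)$ from below, and G\"unther's comparison then gives $\inf_{p}\mathrm{vol}(B^M_r(p))>0$.

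You argue extrinsically instead. Let $u$ be the orthogonal projection of $F-F(p)$ onto $\D F(T_pM)$. The tilt estimate from your Step 1 makes $u$ a local diffeomorphism on $B^M_{\delta/\Lambda}(p)$ with $\frac12|v|\le|\d u(v)|\le|v|$. Lifting radial segments of a disc of radius $\rho\sim\min\{r,1/\Lambda\}$ produces a sheet inside $B^M_r(p)$ that $u$ maps diffeomorphically onto that disc. The lifts can be continued because closed intrinsic balls are compact by completeness. Since $u$ is $1$-Lipschitz, this sheet has volume at least $\omega_m\rho^m$.

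Your route is more self-contained: it reuses only the first-order tilt control, needs neither the Gauss equation nor comparison geometry, and gives an explicit constant. The paper's route is shorter because it quotes known results. In a write-up, the lifting step is the one to spell out, since that is exactly where completeness enters.

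One correction: your remark that there is no injectivity radius bound is not right. A geodesic loop of length $\ell$ based at $p$ is mapped by $F$ to a closed curve $\gamma$ in $\R^n$ with $|\ddot\gamma|\le\Lambda$ on $[0,\ell]$. If $\ell<\pi/(2\Lambda)$, then $\langle\dot\gamma(s),\dot\gamma(0)\rangle>0$ for all $s\in[0,\ell]$. Integrating gives $\langle\gamma(\ell)-\gamma(0),\dot\gamma(0)\rangle>0$, contradicting $\gamma(\ell)=\gamma(0)$. Together with the sectional curvature bound and Klingenberg's lemma, this yields $\mathrm{inj}(M)\ge c/\Lambda$. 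So the $\exp_p$ route you abandoned has no multiplicity issue, and it is precisely the paper's argument.
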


\begin{proof}
    A computation in normal coordinates gives $|\nabla \tau|\le C(m)|A|$ and hence, by the product rule,
    \[
    |\nabla(\phi(\tau))|\le C(m,n)(\mathrm{Lip}(\phi)+|A|).
    \]
    Therefore $1-\phi(\tau)$ is globally Lipschitz on $M$.

    Suppose that the conclusion fails.
    Then there exist $\varepsilon>0$ and a sequence $\{p_j\}\subset M$ such that $\dist_M(p_j,p_{j'})\ge 1$ for $j\neq j'$ and $1-\phi(\tau)\ge \varepsilon$ at $p_j$ for all $j$.
    By the global Lipschitz bound, there exists $r=r(\varepsilon)>0$ such that, for all $j$,
    \[
    1-\phi(\tau)\ge \frac{\varepsilon}{2}
    \qquad\text{on }B_r^M(p_j),
    \]
    where $B_r^M(p)$ denotes the geodesic ball in $(M,g)$ with radius $r$ centered at $p$.
    Taking smaller $r$, we may assume that the balls $B_r^M(p_j)$ are pairwise disjoint.

    Since $\sup_M|A|<\infty$, the Gauss equation yields a uniform bound for the sectional curvature of $M$, and we also get a lower bound on the injectivity radius (using, for instance, \cite[Theorem 2.1]{MR2379801}). G\"unther's volume comparison theorem %[Lee, Introduction to Riemannian Manifolds, 2nd ed. Theorem 11.14]
    then yields that for $r$ sufficiently small, $\inf_{p\in M}\operatorname{vol}(B_r^M(p))>0$, and therefore
    \[
    \E_\phi[F]
    \ge \sum_{j=1}^\infty \int_{B_r^M(p_j)} (1-\phi(\tau)) \d\mathrm{vol}
    \ge \frac{\varepsilon}{2}\sum_{j=1}^\infty \operatorname{vol}(B_r^M(p_j))
    =\infty,
    \]
    contradicting $\E_\phi[F]<\infty$.
\end{proof}

\subsection{Constant-coefficient calibrations}\label{subsec:constant_coeff}

Now we turn to the constant-coefficient case.
By Hodge duality, constant-coefficient calibrations in $\Omega^{m}(\R^n)$ correspond to those in $\Omega^{n-m}(\R^n)$.
If $m=1$ or $m=n-1$, every constant-coefficient calibration is unidirectional; in particular, $G(\phi)$ is a singleton.
In the case $m=2$ and dually, $m=n-2$, the possibilities of $G(\phi)$ are already classified: for $m=2$, $G(\phi)$ consists of, up to rotation, all complex lines in $\C^{r}\subset\R^n$ for some $1\leq r \leq \lfloor\frac{n}{2}\rfloor$, see Harvey--Lawson \cite{MR666108}.
More generally, the possible sets $G(\phi)$ for constant-coefficient calibrations $\phi$ on $\R^n$ are classified for $n\le 7$ in \cite{MR666108,MR726326,MR818350}.
The case $n=8$ is only partial, though already rather well understood \cite{MR936802}.
In higher dimensions $n\geq9$, the situation is much richer and far from classification; see, for example, the spinorial constructions in \cite{MR1208563} and a recent exotic example \cite{zust2025calibration}.

We observe that in the constant-coefficient case the convergence at infinity can be stated in a more geometrical way: the tangent direction approaches $G(\phi)$.

\begin{corollary}\label{cor:constant_coeff_asymptotic_planes}
    Let $\phi\in\Omega^m(\R^n)$ be a constant-coefficient calibration, and let $F\colon M^m\to\R^n$ be a complete immersion.
    If
    \[
    \sup_M |A|<\infty,
    \qquad
    \E_\phi[F]<\infty,
    \]
    then for every $\varepsilon>0$ there exists a compact set $K_\varepsilon\subset M$ such that
    \[
    \dist(\tau(p),G(\phi))<\varepsilon
    \qquad\text{for all }p\in M\setminus K_\varepsilon.
    \]
    Recall that $\dist$ is the extrinsic distance in $\Lambda^m\R^n \cong \R^{\binom nm}$.
\end{corollary}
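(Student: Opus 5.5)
The plan is to deduce this from \Cref{prop:asymptotic_calibration_bounded_A}. A constant-coefficient form is trivially Lipschitz, with Lipschitz constant zero, so that proposition applies directly with $U=\R^n$. It gives, for every $\delta>0$, a compact set $K$ with $1-\phi(\tau)\le\delta$ on $M\setminus K$. What remains is to convert smallness of $1-\phi(\tau)$ into smallness of $\dist(\tau,G(\phi))$, uniformly on the Grassmannian.

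For this we use a compactness argument on $\mathbf{Gr}_m^+(\R^n)$, viewed as a compact subset of $\Lambda^m\R^n$. Since $\phi$ has constant coefficients, the map $\xi\mapsto 1-\phi(\xi)$ is continuous on $\mathbf{Gr}_m^+(\R^n)$ and nonnegative there. Its zero set is exactly $G(\phi)$, which is nonempty and compact because the comass equals $1$ and the supremum is attained on a compact set.

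Now fix $\varepsilon>0$. The set $\{\xi\in\mathbf{Gr}_m^+(\R^n)\mid \dist(\xi,G(\phi))\ge\varepsilon\}$ is compact and disjoint from $G(\phi)$. Hence $1-\phi$ attains a positive minimum $\delta(\varepsilon)>0$ on it. Contrapositively, $1-\phi(\xi)<\delta(\varepsilon)$ forces $\dist(\xi,G(\phi))<\varepsilon$.

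To finish, we apply \Cref{prop:asymptotic_calibration_bounded_A} with $\delta(\varepsilon)/2$ in place of $\varepsilon$. This yields a compact set $K_\varepsilon$ with $1-\phi(\tau)\le\delta(\varepsilon)/2<\delta(\varepsilon)$ on $M\setminus K_\varepsilon$, and therefore $\dist(\tau(p),G(\phi))<\varepsilon$ there.

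There is no real obstacle: the substantive analytic work, namely the Lipschitz bound on $\phi(\tau)$ and the volume lower bound for geodesic balls, is already contained in \Cref{prop:asymptotic_calibration_bounded_A}. The only point to check is that $G(\phi)\neq\emptyset$, so that the distance is finite. This follows from comass $1$ together with compactness of the Grassmannian.
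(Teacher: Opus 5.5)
Your proposal is correct and follows essentially the same route as the paper: invoke \Cref{prop:asymptotic_calibration_bounded_A} (constant coefficients are trivially Lipschitz), then use compactness of $\mathbf{Gr}_m^+(\R^n)$ and continuity of $\phi$ to turn $1-\phi(\tau)\to0$ into $\dist(\tau,G(\phi))\to0$. The only difference is presentation: the paper argues by contradiction, extracting a convergent subsequence $\tau(p_j)\to\xi$, while you extract the positive minimum $\delta(\varepsilon)$ directly. If the set $\{\dist(\cdot,G(\phi))\ge\varepsilon\}$ is empty, the conclusion holds trivially, so that case needs no separate treatment.
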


\begin{proof}
    By \Cref{prop:asymptotic_calibration_bounded_A}, we have $1-\phi(\tau(p))\to 0$ as $p\to\infty$ in $M$.
    If the conclusion were false, then there would exist $\varepsilon>0$ and points $p_j\to\infty$ in $M$ such that $\dist(\tau(p_j),G(\phi))\geq\varepsilon$ for all $j$.
    Since $\mathbf{Gr}_m^+(\R^n)$ is compact, after passing to a subsequence we may assume that $\tau(p_j)\to\xi\in\mathbf{Gr}_m^+(\R^n)$.
    Then $\dist(\xi,G(\phi))\ge \varepsilon$, while continuity of $\phi$ and $1-\phi(\tau(p_j))\to 0$ imply that $\phi(\xi)=1$, i.e., $\xi\in G(\phi)$, a contradiction.
\end{proof}

\begin{remark}\label{rem:variable_coeff_counterexample}
    The assertion of \Cref{cor:constant_coeff_asymptotic_planes} does not hold in the variable-coefficient case.
    Even if $G(\phi_x)=\{E\}$ for every $x$, finite calibration energy need not force $\tau\to E$ at infinity.
    Indeed, let $x=(x_1,x_2,x_3,x_4)\in\R^4$, $E\vcentcolon =e_1\wedge e_2$, $E'\vcentcolon =e_3\wedge e_4$,
    and define
    \[
    \phi\vcentcolon =\d x_1\wedge \d x_2 + a(x_3,x_4)\d x_3\wedge \d x_4,
    \quad
    a(x')\vcentcolon =1-e^{-|x'|^2},
    \quad
    x'=(x_3,x_4).
    \]
    Then $\d\phi=0$ and $\phi$ has comass $1$, and also $G(\phi_x)=\{E\}$ for all $x\in\R^4$, but
    \[
    \E_\phi[F_{E'}]
    =\int_{E'} (1-a(x'))\d\mu
    =\int_{\R^2} e^{-|x'|^2}\d x'<\infty,
    \]
    where $F_{E'}$ denotes a parametrization of the plane $E'$.
    Thus pointwise uniqueness of the calibrated plane does not by itself imply asymptotic convergence to that plane.
\end{remark}

So far we derived necessary conditions for finite calibration energy.
The following is a simple sufficient condition.

\begin{proposition}\label{prop:constant_coeff_upper_bound}
    Let $\phi\in\Omega^m(\R^n)$ be a constant-coefficient calibration.
    Then there exists $C_\phi>0$ such that, for all $\xi\in \mathbf{Gr}_m^+(\R^n)$,
    \[
    0\le 1-\phi(\xi)\le C_\phi\dist(\xi,G(\phi))^2.
    \]
    In particular, if an immersion $F\colon M^m\to\R^n$ satisfies $\dist(\tau,G(\phi))\in L^2(M)$, then $\E_\phi[F]<\infty$.
\end{proposition}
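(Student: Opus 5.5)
The plan is to exploit that $\phi$ is a fixed linear functional on $\Lambda^m\R^n$, together with compactness of $\mathbf{Gr}_m^+(\R^n)$. The lower bound $0\le 1-\phi(\xi)$ is just the comass bound, so only the quadratic upper bound needs work. Since $1-\phi(\xi)\le 2\le \frac{2}{\delta^2}\dist(\xi,G(\phi))^2$ whenever $\dist(\xi,G(\phi))\ge\delta$, it suffices to treat $\xi$ in a small neighborhood of $G(\phi)$. The key observation is that for $\zeta\in G(\phi)$, the function $\xi\mapsto \phi(\xi)$ restricted to the compact smooth manifold $\mathbf{Gr}_m^+(\R^n)\subset\Lambda^m\R^n$ attains its maximum $1$ at $\zeta$. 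Hence its first-order variation along the Grassmannian vanishes at $\zeta$.

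Concretely, I would fix $\xi$ close to $G(\phi)$ and choose a nearest point $\zeta\in G(\phi)$, which exists since $G(\phi)$ is compact as a closed subset. By linearity, $1-\phi(\xi)=\phi(\zeta)-\phi(\xi)=-\phi(\xi-\zeta)$. The task is to show $|\phi(\xi-\zeta)|\le C|\xi-\zeta|^2$. To this end I would split $\xi-\zeta=w+r$, where $w$ is the orthogonal projection of $\xi-\zeta$ onto the tangent space $T_\zeta\mathbf{Gr}_m^+(\R^n)\subset\Lambda^m\R^n$. Since the Grassmannian is a compact smooth embedded submanifold, it has uniformly bounded second fundamental form. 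This gives the uniform estimate $|r|\le C|\xi-\zeta|^2$ for $|\xi-\zeta|$ small, with $C$ independent of the base point $\zeta$.

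It then remains to show $\phi(w)=0$ for all $w\in T_\zeta\mathbf{Gr}_m^+$. Given such $w$, take a smooth curve $c(s)$ in the Grassmannian with $c(0)=\zeta$ and $c'(0)=w$, for instance $c(s)=(e^{sX})_\#\zeta$ with $X$ skew-symmetric, since these curves span the tangent space. The function $s\mapsto\phi(c(s))\le 1$ attains its maximum at $s=0$, so its derivative $\phi(w)$ vanishes. Combining the pieces gives
\[
1-\phi(\xi)=-\phi(r)\le \|\phi\|\,|r|\le C|\xi-\zeta|^2=C\dist(\xi,G(\phi))^2,
\]
where $\|\phi\|$ denotes the Euclidean operator norm of $\phi$ on $\Lambda^m\R^n$. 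Choosing $C_\phi$ as the maximum of this constant and $2/\delta^2$ completes the estimate.

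The ``in particular'' statement then follows by integrating the pointwise bound: $\E_\phi[F]\le C_\phi\int_M\dist(\tau,G(\phi))^2\d\mathrm{vol}<\infty$. The only slightly delicate step is the uniform tangential Taylor estimate $|r|\le C|\xi-\zeta|^2$. I would handle it either through uniform curvature bounds for the compact submanifold $\mathbf{Gr}_m^+$, which is a homogeneous $SO(n)$-orbit, or by writing $\xi=(e^{X})_\#\zeta$ with $|X|\lesssim|\xi-\zeta|$ and Taylor-expanding the exponential to second order. The latter is uniform by $SO(n)$-equivariance of the norm.
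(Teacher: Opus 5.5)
Your proof is correct and follows the same idea as the paper: $1-\phi$ restricted to the compact manifold $\mathbf{Gr}_m^+(\R^n)$ has vanishing differential on its zero set $G(\phi)$, and compactness then gives a uniform second-order bound. The only difference is that you do the Taylor expansion extrinsically, using linearity of $\phi$ and the uniform quadratic bound on the normal component of chords of $\mathbf{Gr}_m^+(\R^n)\subset\Lambda^m\R^n$, which yields the extrinsic distance directly; the paper instead expands in the intrinsic distance and then uses that intrinsic and extrinsic distances are comparable.
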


\begin{proof}
    Set $f_\phi(\xi)\vcentcolon =1-\phi(\xi)$ for $\xi\in \mathbf{Gr}_m^+(\R^n)$.
    Since $\phi$ has constant coefficients, $f_\phi$ is a smooth function on the compact manifold $\mathbf{Gr}_m^+(\R^n)$.
    Moreover $f_\phi\ge 0$ and $G(\phi)=f_\phi^{-1}(0)$.
    Hence every point of $G(\phi)$ is a minimum point of $f_\phi$, so $\d f_\phi=0$ on $G(\phi)$.
    Since $\mathbf{Gr}_m^+(\R^n)$ is compact, by Taylor's theorem there is $C_\phi>0$ such that $f_\phi(\xi)\leq C_\phi\dist_\mathrm{int}(\xi,G(\phi))^2$ holds for any $\xi\in\mathbf{Gr}_m^+(\R^n)$, where $\dist_\mathrm{int}$ denotes the intrinsic (geodesic) distance in $\mathbf{Gr}_m^+(\R^n)$.
    By compactness, $\dist_\mathrm{int}$ is equivalent to the extrinsic distance on $\mathbf{Gr}_m^+(\R^n)$, and the proof is complete by redefining $C_\phi$.
\end{proof}

\begin{remark}\label{rem:nonquadratic}
    The condition $\E_\phi[F]<\infty$ is equivalent to $\dist(\tau,G(\phi))\in L^2(M)$ if the nondegeneracy \eqref{eq:non_degenerate_quadratic} holds.
    % \begin{equation}\label{eq:non_degenerate_quadratic}
    %     1-\phi(\xi)\ge c_\phi\dist(\xi,G(\phi))^2 \qquad \text{for all }\xi \in \mathbf{Gr}_m^+(\R^n)
    % \end{equation}
    % for some $c_\phi>0$. In this case, we say that $\phi$ is \emph{nondegenerate}. 
    However, this may not hold in general.
    An explicit example is given by
    \[
    \phi\vcentcolon =e^{123}+\frac12 e^{156}+\frac12 e^{426}+\frac12 e^{453} \in \Omega^3(\R^6),
    \]
    where $e^{ijk}$ denotes the dual of $e_i\wedge e_j\wedge e_k$. This is a calibration, cf.\ \cite{MR726326}, with $G(\phi)=\{E\}$ for $E\vcentcolon =e_1\wedge e_2\wedge e_3$. Explicit computations along
    \[
    \xi_t\vcentcolon =\frac{(e_1+t e_4)\wedge (e_2+t e_5)\wedge (e_3+t e_6)}{(1+t^2)^{3/2}} \in \mathbf{Gr}_3^+(\R^6)
    \]
    show that $1-\phi(\xi_t) = \frac{3}{8}t^4 + O(t^6)$, whereas $|\xi_t-E|^2 =3t^2 - \frac{15}{4}t^4 +O(t^6)$ as $t\to 0$.
\end{remark}

There are some simple cases where the lower bound \eqref{eq:non_degenerate_quadratic} also holds.
The simplest example is the unidirectional case (see the proof of \Cref{prop:finite_unidirectional_calibration} below).
In particular, if $m=1$ (or $m=n-1$), then every constant-coefficient calibration is unidirectional and hence finite calibration energy is equivalent to $\dist(\tau, G(\phi))\in L^2(M)$.
%More nontrivially, in the case $m=2$ (or $m=n-2$), the equivalence also holds. Counterexamples emerge for $m=3$ and $n=6$.

\subsection{Unidirectional case}

Here we focus on the unidirectional case $\phi_E=\langle\cdot,E\rangle$ for a fixed plane $E\in\mathbf{Gr}_m^+(\R^n)$.
In this case we have a simple characterization of finite calibration energy.

\begin{proposition}\label{prop:finite_unidirectional_calibration}
    Let $F\colon M^m\to\R^n$ be an immersion.
    Let $\phi_E\in\Omega^m(\R^n)$ be a unidirectional calibration in the direction $E\in\mathbf{Gr}_m^+(\R^n)$.
    Then $\E_{\phi_E}[F]<\infty$ holds if and only if $|\tau-E|\in L^2(M)$, or equivalently, $\dist(\tau,G(\phi_E)) \in L^2(M)$.
\end{proposition}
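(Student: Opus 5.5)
The whole statement follows from the pointwise identity
\[
1-\phi_E(\tau)=1-\langle\tau,E\rangle=\tfrac12|\tau-E|^2,
\]
valid because $\tau$ and $E$ are unit $m$-vectors: expanding $|\tau-E|^2=|\tau|^2-2\langle\tau,E\rangle+|E|^2=2-2\langle\tau,E\rangle$ gives it at once. This is already recorded in the introduction. Integrating over $M$ against $\d\mathrm{vol}$ yields $\E_{\phi_E}[F]=\frac12\int_M|\tau-E|^2\d\mathrm{vol}$, so finiteness of the energy is equivalent to $|\tau-E|\in L^2(M)$. No regularity or properness of $F$ is needed, since the integrand is nonnegative and measurable.

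For the second equivalence, I would first identify $G(\phi_E)$. A unit simple $m$-vector $\xi$ satisfies $\langle\xi,E\rangle=1=|\xi||E|$ only in the equality case of Cauchy--Schwarz, so $\xi=E$. Hence $G(\phi_E)=\{E\}$ and $\dist(\tau,G(\phi_E))=|\tau-E|$ pointwise, which gives the claim.

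This also yields the nondegeneracy \eqref{eq:non_degenerate_quadratic} for $\phi_E$ with $c_{\phi_E}=\frac12$, as used after \Cref{rem:nonquadratic}. There is no real obstacle here; the only step that requires a moment's care is the characterization of $G(\phi_E)$ via the equality case of Cauchy--Schwarz.
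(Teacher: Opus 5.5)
Your proof is correct and matches the paper's argument: the pointwise identity $1-\langle\xi,E\rangle=\frac12|\xi-E|^2=\frac12\dist(\xi,G(\phi_E))^2$ together with $G(\phi_E)=\{E\}$. Your Cauchy--Schwarz justification of $G(\phi_E)=\{E\}$ spells out a step the paper states without comment.
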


\begin{proof}
    This follows since $G(\phi_E)=\{E\}$ and $1-\phi_E(\xi)=1-\langle \xi,E\rangle
    =\frac12|\xi-E|^2
    =\frac12\dist(\xi,G(\phi_E))^2$.
\end{proof}

In the case of entire graphs with bounded gradient, we further have the following characterization.

\begin{proposition}\label{prop:graphical_unidirectional}
    Let $F\colon \R^m\to\R^n$ be an immersion of the form $F=(\cdot,f)$ for $f\colon\R^m\to\R^{n-m}$ with $\|\D f\|_\infty<\infty$.
    Let $E=e_1\wedge\dots\wedge e_m$.
    Then $\E_{\phi_E}[F]<\infty$ if and only if $|\D f|\in L^2(\R^m)$.
\end{proposition}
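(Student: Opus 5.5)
By \Cref{prop:finite_unidirectional_calibration}, $\E_{\phi_E}[F]=\frac12\int_{\R^m}|\tau-E|^2\d\mathrm{vol}$, so it suffices to compare $1-\langle\tau,E\rangle$ pointwise with $|\D f|^2$, and $\d\mathrm{vol}$ with $\d x$. Both comparisons will have constants depending only on $L\vcentcolon=\|\D f\|_\infty$.

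First, the volume density. In the standard coordinates $x\in\R^m$ we have $\partial_iF=e_i+\partial_if$, where $\partial_i f\in\R^{n-m}$ is viewed in the orthogonal complement of $E$. Set $J\vcentcolon=|\partial_1F\wedge\dots\wedge\partial_mF|=\sqrt{\det(I+\D f^T\D f)}$. This gives $1\le J\le (1+L^2)^{m/2}$, so $\d\mathrm{vol}=J\d x$ is uniformly comparable to Lebesgue measure.

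Next, the calibration term. Expanding the wedge product, the $E$-component of $\partial_1F\wedge\dots\wedge\partial_mF$ is exactly $e_1\wedge\dots\wedge e_m$; every other term contains at least one factor from $E^\perp$ and is orthogonal to $E$. Hence $\langle\tau,E\rangle=1/J$, and
\[
(1-\langle\tau,E\rangle)\,J = J-1.
\]
Consequently
\[
\E_{\phi_E}[F]=\int_{\R^m}\bigl(\sqrt{\det(I+\D f^T\D f)}-1\bigr)\d x .
\]

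It remains to show $c|\D f|^2\le J-1\le C|\D f|^2$ pointwise with $c,C$ depending only on $m$ and $L$. Let $\lambda_1,\dots,\lambda_m\ge0$ be the eigenvalues of $\D f^T\D f$. Then $\sum_i\lambda_i=|\D f|^2$ and $\lambda_i\le L^2$.

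For the lower bound, $\prod_i(1+\lambda_i)\ge 1+\sum_i\lambda_i$ gives $J\ge\sqrt{1+|\D f|^2}$. Since $\sqrt{1+s}-1\ge s/(1+\sqrt{1+L^2})$ for $0\le s\le L^2$, we get $J-1\ge c|\D f|^2$.

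For the upper bound, $J-1\le J^2-1=\prod_i(1+\lambda_i)-1$. Each monomial in this expansion has bounded factors, so $J^2-1\le (1+L^2)^{m-1}\sum_i\lambda_i$, which gives $J-1\le C|\D f|^2$.

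Integrating the two pointwise bounds over $\R^m$ yields both implications. The only step requiring care is the algebraic identity $\langle\tau,E\rangle=1/J$, which reduces the statement to elementary eigenvalue inequalities. The bounded-gradient hypothesis is used only in the lower bound, where it prevents $J-1$ from growing like $|\D f|$ instead of $|\D f|^2$.
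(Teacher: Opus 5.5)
Your proof is correct and follows the paper's route: the identity $\langle\tau,E\rangle=1/\sqrt{\det g}$ reduces $\E_{\phi_E}[F]$ to $\int_{\R^m}\bigl(\sqrt{\det(I+(\D f)^T\D f)}-1\bigr)\d x$, and the integrand is then compared pointwise with $|\D f|^2$; the paper only asserts this comparability, while you carry it out via eigenvalues. One small correction to your closing remark: the upper bound also uses the gradient bound (through your factor $(1+L^2)^{m-1}$), and it genuinely needs it when $\min\{m,n-m\}\ge3$, since if three eigenvalues equal $\lambda$ and the rest vanish, then $J-1=(1+\lambda)^{3/2}-1$, which is not $O(\lambda)=O(|\D f|^2)$ as $\lambda\to\infty$.
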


\begin{proof}
    % We first note that, since $\langle\tau,E\rangle=\frac{1}{\sqrt{\det(g)}}$,
    % \begin{align}
    %     \E_{\phi_E}[F]
    %     = \int_{\R^m}\bigl(\sqrt{\det(g)}-1\bigr)\d x
    %     = \int_{\R^m}\frac{\det(g)-1}{\sqrt{\det(g)}+1}\d x. \label{eq:0407-01}
    % \end{align}
    % Let $L\vcentcolon =\|\D f\|_\infty$ and $S\vcentcolon =(\D f)^T \D f$. 
    % Then $g=I+S$, and $S$ is symmetric and nonnegative.
    % Let $\lambda_1,\dots,\lambda_m\ge0$ denote the eigenvalues of $S$ at some fixed $x\in\R^m$.
    % Then
    % \[
    %     \sum_{i=1}^m \lambda_i = \tr S = |\D f|^2,
    %     \qquad
    %     0\le \lambda_i \le L^2.
    % \]
    % Moreover, we also have
    % \begin{align}
    %     1\leq \det(g)=\det(I+S)=\prod_{i=1}^m(1+\lambda_i) \leq (1+L^2)^m.
    % \end{align}
    % Combining the above formulae and estimates with \eqref{eq:0407-01}, it is now sufficient to show that there exist constants $c,C>0$, depending only on $m$ and $L$, such that
    % \begin{align}
    %     c\sum_{i=1}^m \lambda_i \le \prod_{i=1}^m(1+\lambda_i)-1 \le C\sum_{i=1}^m \lambda_i.
    % \end{align}
    % This follows since the telescoping identity yields
    % \[
    %     \prod_{i=1}^m(1+\lambda_i)-1
    %     = \sum_{i=1}^m\biggl(\prod_{j=1}^i(1+\lambda_j)-\prod_{j=1}^{i-1}(1+\lambda_j)\biggr)
    %     = \sum_{i=1}^m \lambda_i\prod_{j=1}^{i-1}(1+\lambda_j),
    % \]
    % and since $1\leq \prod_{j=1}^{i-1}(1+\lambda_j) \leq (1+L^2)^{m-1}$.
    Since $\langle\tau,E\rangle=\frac{1}{\sqrt{\det(g)}}$ and $g=I+(\D f)^T\D f$,
    we have
    \[
    \E_{\phi_E}[F] = \int_{\R^m}\bigl(\sqrt{\det(g)}-1\bigr)\d x = \int_{\R^m}\Bigl(\sqrt{\det(I+(\D f)^T\D f)}-1\Bigr)\d x.
    \]
    The assertion follows from the fact that the integrand and $|\D f|^2$ are uniformly comparable provided $\|\D f\|_\infty<\infty$.
\end{proof}

We now exhibit several examples of finite direction energy $\E_{\phi_E}<\infty$, as well as of uniformly finite local volume.

\begin{example}[Topological complexity]\label{ex:topology}
    For any oriented surface $M^2$ of finite genus and finitely many ends, there exists a proper immersion $F\colon M\to\R^3$ with uniformly finite local volume such that $\mathcal{E}_{\phi_E}[F]+\sup_M|A|<\infty$.

    If we do not require bounded second fundamental form, we may even construct examples of infinitely many distinct planar ends that are discretely distributed and connected by a suitable family of tubes getting thinner and thinner.
\end{example}

\begin{example}[Single end with arbitrary multiplicity]\label{ex:single_end_multiplicity}
    Even if the topology is simple, the ends may have nontrivial overlaps.
    For $p\in\N$, let $F\colon \R^2\to\R^4$ be defined by $z \mapsto (z^p,\eta(|z|)z)$, where $z\in\C\cong\R^2$ and $\eta\in C^\infty_c([0,2))$ with $\eta\equiv1$ on $[0,1]$, cf.\ \cite[p.~248]{MR1366547}.
    Then the single end has arbitrary multiplicity $p\geq1$, while both $|A|$ and $|\tau-E|$ for $E\vcentcolon =e_1\wedge e_2$ have compact support; in particular, $\sup_{\R^2}|A|<\infty$, $F$ has uniformly finite local volume, and $\E_{\phi_E}[F]<\infty$.
    In addition, by using the compact-support property we can compute the total Gaussian curvature $\int_M K \d\mathrm{vol} = 2\pi(1-p)$.
    Moreover, if we define $F_R$ by replacing $\eta$ with $\eta_R\vcentcolon =\eta(\cdot/R)$ for $R>0$, then the total Gaussian curvature remains unchanged while the total curvature satisfies $\int_M|A|^2 \d\mathrm{vol} \to 4\pi(p-1)$ as $R\to\infty$ (but $\E_{\phi_E}[F_R]\to\infty$).
\end{example}

\begin{example}[Unbounded graphical ends]\label{ex:graphical}
    Even if the ends of $F\colon M^m\to\R^n$ with uniformly finite local volume and $\E_{\phi_E}[F]+\sup_M|A|<\infty$ are graphical, they may not be asymptotic to planes.
    As already discussed in \cite{MR5008349}, if $m=1$, admissible ends include diverging and oscillating graphs such as $f(x)=|x|^\alpha$ and $f(x)=|x|^\alpha\sin\log|x|$ with $\alpha\in(0,\frac{1}{2})$.
    In dimension $m=2$, radially diverging ends such as $f(x)=\log\log|x|$ and their oscillating variants are still admissible (cf.\ \Cref{prop:graphical_unidirectional}).
\end{example}

\subsection{K\"ahler case}\label{subsec:kaehler}

We now discuss some classical non-unidirectional examples.
The first model case of a multidirectional example concerns the K\"ahler calibration; see \cite{MR666108} for details.

Let $m=2k$ and $n=2k+2\ell$ with $k,\ell\geq1$, and identify $\R^n=\C^{k+\ell}$. 
The \emph{K\"ahler calibration} $\phi_{K}\in \Omega^{2k}(\R^n)$ is defined by
\[
\phi_{K}\vcentcolon =\frac{1}{k!}\omega^k,
\]
where $\omega\vcentcolon =\sum_{j=1}^{k+\ell}\d x_j\wedge \d y_j=\frac{\sqrt{-1}}{2}\sum_{j=1}^{k+\ell}\d z_j\wedge \d\bar{z}_j$, $z_j=x_j+\sqrt{-1}\,y_j$, denotes the standard K\"ahler $2$-form on $\C^{k+\ell}$.
Then the set $G(\phi_K)$ consists of canonically oriented complex $k$-planes, i.e.,
%(In real terms, $V\in \mathbf{Gr}_{2k}^+(\R^{2(k+\ell)})$ is a complex $k$-plane if $\dim{V}=2k$ and $J(V)=V$, where $J(x,y)=(-y,x)$ for $x,y\in\R^{k+\ell}$.)
\[
G(\phi_K)=\{\text{complex $k$-planes in $\C^{k+\ell}$}\}.
%\cong U(k+\ell)/(U(k)\times U(\ell)),
\] 
% where $U(k)$ stands for the unitary group of degree $k$,
% so the real dimension is $\dim G(\phi_K)=(k+\ell)^2-k^2-\ell^2=2k\ell$.
%"Husemoller, Fibre Bundles (3rd ed), Chapter 8, Section 2, Theorem 2.2" for a reference
Hence, finite K\"ahler calibration energy admits multidirectional ends.
Indeed, $\phi_K$-calibrated submanifolds are exactly complex $k$-dimensional submanifolds, demonstrating the admissibility of a broad class of curved ends.
In particular, if an immersion $F\colon \C^k\to\C^{k+\ell}$ is given as an entire graph, $F(z)=(z,f(z))$ for some $f\colon\C^k\to\C^\ell$, then $F$ is K\"ahler calibrated if and only if $f$ is holomorphic.

Now we characterize the class of immersions with finite K\"ahler calibration energy under some assumptions.
From now on we will write the ambient complex dimension $N\vcentcolon=k+\ell$ to shorten notation.

We first focus on the case of dimension $k=1$.
In this case, every oriented immersion $F\colon M^2\to\C^{N}$ (with $N\geq2$) induces the canonical complex structure $j\in\Gamma(\operatorname{Aut}(TM))$ such that $g(ju,v)=\d\mathrm{vol}(u,v)$, which is the $\pi/2$-rotation on each tangent plane, so that $(M,j)$ is a Riemann surface.
Let $J$ be the standard complex structure on $\C^N$; in particular, $\omega(u,v)=\langle Ju,v\rangle$.
Now we define the nonlinear Cauchy--Riemann operator
\[
\bar{\partial} F \vcentcolon =\frac12(\d F+J\circ \d F\circ j).
\]
Then $\bar{\partial} F = 0$ holds on $M$ if and only if $F\colon (M,j)\to(\C^N,J)$ is pseudo-holomorphic (i.e., $\d F\circ j=J\circ \d F$).

Using the canonical Riemann surface structure, we can directly characterize the K\"ahler calibration energy $\E_\omega$ in terms of the following \emph{$\bar{\partial}$-energy}
\begin{equation}\label{eq:dbar_energy}
    \bar{\mathcal{D}}[F] \vcentcolon= \int_M |\bar{\partial} F|^2 \d\mathrm{vol}.
\end{equation}

\begin{proposition}\label{prop:kahler_L2_1D}
    Let $F\colon M^2\to\C^{N}$ be an immersion.
    Then the pointwise identity $|\bar{\partial} F|^2=1-\omega(\tau)$ holds on $M$.
    In particular, 
    \[
    \E_\omega[F] = \bar{\mathcal{D}}[F].
    \]
\end{proposition}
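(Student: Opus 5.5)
The identity is pointwise, so I will compute at a fixed $p\in M$ using an oriented orthonormal basis $e_1,e_2=je_1$ of $T_pM$. Write $u\vcentcolon=\d F(e_1)$ and $v\vcentcolon=\d F(e_2)$, which are orthonormal in $\C^N\cong\R^{2N}$. Then $\tau=u\wedge v$ and $\omega(\tau)=\omega(u,v)=\langle Ju,v\rangle$. Since $je_1=e_2$ and $je_2=-e_1$, the operator $\bar\partial F$ acts as
\[
\bar\partial F(e_1)=\tfrac12(u+Jv),\qquad \bar\partial F(e_2)=\tfrac12(v-Ju).
\]

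I will take $|\bar\partial F|^2$ to be the norm of $\bar\partial F\in T^*M\otimes\R^{2N}$ induced by $g$ and the Euclidean metric, so that
\[
|\bar\partial F|^2=|\bar\partial F(e_1)|^2+|\bar\partial F(e_2)|^2 .
\]
Using $|u|=|v|=1$, the fact that $J$ is an isometry, and the skew-adjointness of $J$, each term expands separately:
\[
|u+Jv|^2=2+2\langle u,Jv\rangle=2-2\langle Ju,v\rangle,\qquad |v-Ju|^2=2-2\langle Ju,v\rangle.
\]
Adding these and multiplying by $\tfrac14$ gives $1-\langle Ju,v\rangle=1-\omega(\tau)$. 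This is independent of the choice of $e_1$, as it must be since both sides are tensorial. Integrating against $\d\mathrm{vol}$ then yields $\E_\omega[F]=\bar{\mathcal D}[F]$ directly from the definition of $\E_\phi$ with $\phi=\omega$ (here $k=1$, so $\phi_K=\omega$).

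The only delicate point is normalization and sign conventions. First, I need $j$ defined so that $g(ju,v)=\d\mathrm{vol}(u,v)$, which gives $je_1=e_2$ for a positive orthonormal frame. Second, I need $\omega(u,v)=\langle Ju,v\rangle$ to be consistent with $\omega=\sum \d x_j\wedge \d y_j$ and $J(x,y)=(-y,x)$; a quick check is $\omega(e_{x_1},e_{y_1})=1$ with $Je_{x_1}=e_{y_1}$. Third, the norm on $\bar\partial F$ must be the full Hilbert--Schmidt norm and not one with an extra factor $\tfrac12$. With these conventions in place, the factor $\tfrac12$ in $\bar\partial F$ gives exactly coefficient $1$, and no further obstacle is expected.
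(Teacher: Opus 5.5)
Your proof is correct and is essentially the paper's own computation. The paper likewise fixes a positively oriented orthonormal frame $(e,je)$ at $p$, writes $|\bar\partial F|^2=\frac14|\d F(e)+J\d F(je)|^2+\frac14|\d F(je)-J\d F(e)|^2$, and uses orthonormality together with $\omega(u,v)=\langle Ju,v\rangle$ to obtain $1-\omega(\tau)$ before integrating.
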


The pointwise identity is classical; see, for example, \cite[Lemma 2.2.1]{MR2954391}.
We include a short proof in our terminology for the reader's convenience.

\begin{proof}[Proof of \Cref{prop:kahler_L2_1D}]
    Fix $p\in M$.
    Let $(e,je)$ be a positively oriented orthonormal basis on $T_pM$.
    Since $\d F$ is isometric, $\d F(e)$ and $\d F(je)$ are also orthonormal in $\C^{N}$, and
    \[
    \tau=\d F(e)\wedge \d F(je), \qquad \omega(\tau)=\omega(\d F(e),\d F(je))=\langle J\d F(e),\d F(je)\rangle.
    \]
    We then compute
    \begin{align}
        |\bar{\partial} F|^2 &= |\bar{\partial} F(e)|^2+|\bar{\partial} F(je)|^2 =\frac14|\!\d F(e) + J\d F(je)|^2 + \frac14|\!\d F(je) - J\d F(e)|^2\\
        &= 1 - \langle J\d F(e),\d F(je)\rangle = 1- \omega(\tau),
    \end{align}
    which also directly yields the energy representation.
\end{proof}

\begin{remark}\label{rem:interpretation_kaehler}
    Consequently, our energy identity \eqref{eq:energy_identity} implies that the $\bar{\partial}$-energy in \eqref{eq:dbar_energy} is a strict Lyapunov functional along any proper oriented mean curvature flow of surfaces in $\C^N$.
    Note also that, if $\theta_\omega:M\to[0,\pi]$ denotes the K\"ahler angle (introduced in \cite{MR692106}), i.e., $\omega(\tau)=\cos\theta_\omega$, then we may alternatively represent
    \[
    \E_{\omega}[F] = \int_M (1-\cos\theta_\omega) \d\mathrm{vol}.
    \]
\end{remark}

Now we turn to the case of general dimension $k\geq1$.
In this case, in order to induce a suitable almost complex structure to the domain manifold, we work in the symplectic framework.

Let $F\colon M^{2k}\to(\C^N,\omega)$ (with $N\geq k+1$) be an \emph{oriented symplectic immersion}, meaning that the pullback $F^*\omega$ is symplectic and induces the given orientation on $M$.
Since $F^*\omega$ is clearly closed, it is symplectic if and only if it is nondegenerate, i.e., $(F^*\omega)^k\neq0$ pointwise on $M$.
The orientation condition then means that $(F^*\omega)^k>0$ on $M$.
Since $\frac{1}{k!}(F^*\omega)^k=F^*\phi_K=\phi_K(\tau)\d\mathrm{vol}$, we find that $F$ is an oriented symplectic immersion if and only if
\[
\phi_K(\tau)>0 \qquad \text{on $M$}.
\]

Let $J_\omega\in\Gamma(\operatorname{Aut}(TM))$ be the almost complex structure on $M$ associated with the pair $(F^*\omega,g)$ in the sense of \cite[Proposition 12.6]{MR1853077}, where we recall that $g=F^*\langle\cdot,\cdot\rangle$.
More precisely, if $S\in\Gamma(\operatorname{Aut}(TM))$ is defined by $g(Su,v)=F^*\omega(u,v)$, then $J_\omega\vcentcolon =(-S^2)^{-1/2}S$, cf.\ \cite[Proposition 12.3]{MR1853077}.
% Note that in general $g_\omega(u,v)\vcentcolon =F^*\omega(u,J_\omega v)\neq g(u,v)$.
We then similarly define the nonlinear Cauchy--Riemann operator
\[
\bar{\partial}F\vcentcolon =\frac12(\d F+J\circ \d F\circ J_\omega).
\]
Here we use the same notation $\bar{\partial}$ because if $k=1$ then $J_\omega=j$.

One can show that $F$ is calibrated by $\phi_K$ if and only if $\bar{\partial}F=0$ holds on $M$, or equivalently, $F \colon (M,J_\omega)\to(\C^N,J)$ is pseudo-holomorphic.
Here we indicate that a quantitative version of this fact provides a simple characterization of finite K\"ahler calibration energy, again in terms of the $\bar{\partial}$-energy $\bar{\mathcal{D}}$ (defined as in \eqref{eq:dbar_energy}).

\begin{proposition}\label{prop:kahler_L2}
    Let $F\colon M^{2k}\to\C^N$ be an oriented symplectic immersion.
    Then the pointwise estimates
    \[
    1-\phi_K(\tau)\le |\bar{\partial}F|^2\le k(1-\phi_K(\tau))
    \]
    hold on $M$.
    In particular, $\E_{\phi_K}[F]<\infty$ holds if and only if
    \[
    \bar{\mathcal{D}}[F]=\int_M|\bar{\partial}F|^2\d\mathrm{vol} < \infty.
    \]
\end{proposition}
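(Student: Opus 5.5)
The plan is to reduce everything to linear algebra at a single point $p\in M$, by putting the pair $(g,F^*\omega)$ on $T_pM$ into normal form. Let $S$ be the skew-adjoint endomorphism with $g(Su,v)=F^*\omega(u,v)$. Since $|\omega(u,v)|\le|u||v|$, the operator norm satisfies $\|S\|\le1$. Choose a $g$-orthonormal basis $e_1,f_1,\dots,e_k,f_k$ of $T_pM$ with $Se_i=\lambda_i f_i$ and $Sf_i=-\lambda_i e_i$, where $\lambda_i\in(0,1]$; the signs can be arranged so that all $\lambda_i>0$, and then the basis is positively oriented because $(F^*\omega)^k>0$. In this basis $J_\omega=(-S^2)^{-1/2}S$ acts by $J_\omega e_i=f_i$ and $J_\omega f_i=-e_i$. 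The quantities $\lambda_i=\omega(\d F e_i,\d F f_i)=\cos\theta_i$ are the K\"ahler angles.

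The first step is to express $\phi_K(\tau)$ in terms of the $\lambda_i$:
\[
\phi_K(\tau)=\frac{1}{k!}(F^*\omega)^k(e_1,f_1,\dots,e_k,f_k)=\prod_{i=1}^k\lambda_i,
\]
since $F^*\omega=\sum_i\lambda_i e^i\wedge f^i$ in the dual basis.

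The second step is to compute $|\bar\partial F|^2$ exactly as in the proof of \Cref{prop:kahler_L2_1D}, summing over the orthonormal basis:
\[
|\bar\partial F|^2=\sum_i\Bigl(\tfrac14|\d F e_i+J\d F f_i|^2+\tfrac14|\d F f_i-J\d F e_i|^2\Bigr)=\sum_{i=1}^k(1-\lambda_i).
\]
Here each $\d F e_i$, $\d F f_i$ is a unit vector and $\langle J\d F e_i,\d F f_i\rangle=\lambda_i$.

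The claim is then the elementary inequality, for $\lambda_i\in(0,1]$,
\[
1-\prod_{i=1}^k\lambda_i\;\le\;\sum_{i=1}^k(1-\lambda_i)\;\le\;k\Bigl(1-\prod_{i=1}^k\lambda_i\Bigr).
\]
The left inequality follows from the telescoping identity $1-\prod_i\lambda_i=\sum_i(1-\lambda_i)\prod_{j<i}\lambda_j$ together with $\prod_{j<i}\lambda_j\le1$. The right inequality holds because $\prod_j\lambda_j\le\lambda_i$ gives $1-\lambda_i\le1-\prod_j\lambda_j$ for each $i$; summing over $i$ gives the factor $k$. The equivalence of finite energies then follows by integrating the pointwise bounds.

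The main obstacle is justifying the normal form, specifically that $J_\omega$ rotates each pair $(e_i,f_i)$ exactly. This holds because $-S^2$ is diagonal with eigenvalue $\lambda_i^2$ on $\operatorname{span}\{e_i,f_i\}$, so $(-S^2)^{-1/2}S$ acts as $\lambda_i^{-1}S$ there. A secondary point is confirming that the positive orientation fixes the sign of $\prod_i\lambda_i$, which is exactly the symplectic orientation assumption $\phi_K(\tau)>0$.
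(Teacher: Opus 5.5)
Your proposal is correct and follows the paper's proof essentially step by step. It uses the same normal form of $S$ with $J_\omega e_\alpha=f_\alpha$, the same identities $\phi_K(\tau)=\prod_\alpha\lambda_\alpha$ and $|\bar\partial F|^2=\sum_\alpha(1-\lambda_\alpha)$, and the same elementary inequality. The only differences are minor: you prove that inequality directly (by telescoping and by $\prod_j\lambda_j\le\lambda_i$) where the paper just says it follows by induction on $k$, and you make explicit that the normal-form basis is positively oriented, which the paper leaves implicit.
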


\begin{proof}
Fix $p\in M$.
Since $S\in \operatorname{Aut}(T_pM)$ is real skew-adjoint and invertible, its normal form is a block diagonal matrix of the form
\[
S = \bigoplus_{\alpha=1}^k
\begin{pmatrix}
0 & -\lambda_\alpha\\
\lambda_\alpha & 0
\end{pmatrix}
,\qquad \lambda_\alpha>0.
\]
In other words, there is an orthonormal basis $(e_1,f_1,\dots,e_k,f_k)$ of $T_pM$ such that $Se_\alpha=\lambda_\alpha f_\alpha$ and $Sf_\alpha=-\lambda_\alpha e_\alpha$.
Therefore, $F^*\omega=\sum_{\alpha=1}^k \lambda_\alpha e_\alpha^\flat\wedge f_\alpha^\flat$, where $\flat$ denotes the musical isomorphism of tangent vectors on $M$.
Also, since $J_\omega=(-S^2)^{-1/2}S$, we have $f_\alpha=J_\omega e_\alpha$.
Moreover, we estimate $\lambda_\alpha
=F^*\omega(e_\alpha,f_\alpha)\leq1$, using that $F$ is an isometry and $\omega$ has comass $1$.
Consequently, we obtain the representation
\[
F^*\omega=\sum_{\alpha=1}^k \lambda_\alpha e_\alpha^\flat \wedge (J_\omega e_\alpha)^\flat, \qquad 0 < \lambda_\alpha \leq 1.
\]

Now, since $\phi_K(\tau)\d\mathrm{vol}=\frac{1}{k!}(F^*\omega)^k$, we can represent $\phi_K(\tau)$ by
\begin{equation}\label{eq:0412_01}
    \phi_K(\tau)=\prod_{\alpha=1}^k \lambda_\alpha.
\end{equation}

On the other hand, since $\d F$ is isometric, the vectors $\d F(e_\alpha)$ and $\d F(J_\omega e_\alpha)$ are orthonormal. Moreover, we have $\langle J \d F(e_\alpha),\d F(J_\omega e_\alpha)\rangle = \lambda_\alpha$, and thus
\begin{equation}
    |\!\d F(e_\alpha)+J\d F(J_\omega e_\alpha)|^2+|\!\d F(J_\omega e_\alpha)-J\d F(e_\alpha)|^2=4(1-\lambda_\alpha).
\end{equation}
Summing over $\alpha$, we obtain
\begin{equation}\label{eq:0412_02}
    |\bar\partial F|^2=\sum_{\alpha=1}^k(1-\lambda_\alpha).
\end{equation}

Since $0<\lambda_\alpha\le 1$, we have the elementary inequalities (that may be readily proven by induction on $k$)
\[
1-\prod_{\alpha=1}^k\lambda_\alpha
\le
\sum_{\alpha=1}^k(1-\lambda_\alpha)
\le
k\Bigl(1-\prod_{\alpha=1}^k\lambda_\alpha\Bigr),
\]
which combined with \eqref{eq:0412_01} and \eqref{eq:0412_02} yield the desired estimates.
\end{proof}

\subsection{Special Lagrangian case}

The second model case is the special Lagrangian calibration, again classical in \cite{MR666108}.

Let $n=2m$ and identify $\R^n=\C^m$.
Let $\Omega\vcentcolon =\d z_1\wedge\cdots\wedge \d z_m$.
Recall that $F\colon M^m \to \C^m$ is called a \emph{special Lagrangian of phase $e^{\sqrt{-1}\theta}$} if it is \emph{Lagrangian}, i.e., the K\"ahler $2$-form $\omega$ satisfies
$F^*\omega=0$, and has constant \emph{phase} $e^{\sqrt{-1}\theta}$, i.e., $F^*\Omega=e^{\sqrt{-1}\theta}\d\mathrm{vol}$ for $\theta\in\R$. 
By Harvey--Lawson \cite{MR666108}, this is equivalent to $F$ being calibrated by the \emph{special Lagrangian calibration}
\[
\phi_\theta\vcentcolon =\Re\bigl(e^{-\sqrt{-1}\theta}\Omega\bigr).
\]
We denote the oriented Lagrangian Grassmannian by
\[
\mathrm{Lag}^+(\C^m)
\vcentcolon=
\{\xi\in \mathbf{Gr}_m^+(\R^{2m})
 \mid \omega|_{P_\xi}=0\},
 %\cong U(m)/SO(m),
\]
where $P_\xi\subset\R^{2m}=\C^m$ denotes the oriented $m$-plane represented by $\xi$.
Then
\begin{align}
    G(\phi_\theta) &=\{\xi\in \mathrm{Lag}^+(\C^m)\mid
        \Omega(\xi)=e^{\sqrt{-1}\theta}\}\\
    &=\{Ae_1\wedge\cdots\wedge Ae_m\mid A\in U(m),\ \textstyle\det_{\C}A=e^{\sqrt{-1}\theta}\}.
    % \cong SU(m)/SO(m).
\end{align}
% In particular, $G(\phi_0)=\{A\R^m\mid A\in SU(m)\}$ and $G(\phi_\theta)=e^{i\theta/m}G(\phi_0)$.
% $\dim G(\phi_\theta)=\frac{1}{2}(m^2+m-2)$.

Now we characterize the finite calibration energy regime in the special Lagrangian case.
Here both the Lagrangian-deviation $F^*\omega$ and the phase-deviation $\Omega(\tau)-e^{\sqrt{-1}\theta}$ need to be controlled.
Note that $F^*\Omega=\Omega(\tau)\d\mathrm{vol}$.

\begin{proposition}\label{prop:sLag_L2}
    Let $m\geq 2$, $\theta\in\R$, and $F\colon M^m\to\C^m$ be an immersion.
    Then the pointwise estimates
    \[
    \frac{1}{2\lfloor \frac{m}{2}\rfloor}|F^*\omega|^2 + \frac12\big| \Omega(\tau)-e^{\sqrt{-1}\theta} \big|^2
    \le  1-\phi_\theta(\tau) 
    \le \frac12|F^*\omega|^2 + \frac12\big| \Omega(\tau)-e^{\sqrt{-1}\theta} \big|^2
    \]
    hold on $M$.
    In particular, $\E_{\phi_\theta}[F]<\infty$ holds if and only if
    \[
    |F^*\omega|\in L^2(M)
    \quad\text{and}\quad
    \Omega(\tau)-e^{\sqrt{-1}\theta}\in L^2(M).
    \]
\end{proposition}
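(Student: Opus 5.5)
Fix $p\in M$ and write $\xi=\tau(p)$, $P=P_\xi$. The plan is to reduce everything to a normal form for $\xi$ under the unitary group. This is legitimate because $|F^*\omega|$, $|\Omega(\tau)|$ and $1-\phi_\theta(\tau)$ transform compatibly under $U(m)$. Indeed, $\omega$ is $U(m)$-invariant, while $A^*\Omega=\det_{\C}A\,\Omega$. So replacing $\xi$ by $A\xi$ only rotates the phase, and we may assume $\theta=0$ after absorbing the phase into $\Omega(\tau)$.

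We use the standard normal form for the restriction of $\omega$ to a real $m$-plane. There is an orthonormal basis $(e_1,f_1,\dots,e_r,f_r,g_1,\dots,g_{m-2r})$ of $P$, with $r\le\lfloor m/2\rfloor$, such that
\[
\omega|_P=\sum_{\alpha=1}^r\sin\psi_\alpha\, e_\alpha^\flat\wedge f_\alpha^\flat,\qquad \psi_\alpha\in(0,\pi/2].
\]
This is the same skew-adjoint block decomposition used in the proof of \Cref{prop:kahler_L2}, with $\lambda_\alpha=\sin\psi_\alpha\le1$. It gives $|F^*\omega|^2=\sum_\alpha\sin^2\psi_\alpha$, up to the normalization of the norm on $2$-forms. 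I would fix that normalization so that a single block contributes $\sin^2\psi_\alpha$.

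The key algebraic fact, which I expect to be the main computation, is
\begin{equation}\label{eq:sLag_modulus}
|\Omega(\xi)|=\prod_{\alpha=1}^r\cos\psi_\alpha .
\end{equation}
To prove it, identify $|\Omega(\xi)|^2$ with the Gram determinant $\det(\langle v_i,v_j\rangle_{\C})$ of a real orthonormal basis $v_i$ of $P$ under the Hermitian product. Its real part is $\delta_{ij}$ and its imaginary part is $\omega(v_i,v_j)$. So $|\Omega(\xi)|^2=\det(I+\sqrt{-1}\,W)$ with $W$ the skew matrix of $\omega|_P$. In the block basis above this determinant factorizes as $\prod_\alpha(1-\sin^2\psi_\alpha)$, which gives \eqref{eq:sLag_modulus}.

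Now write $\Omega(\xi)=\rho e^{\sqrt{-1}\beta}$ with $\rho=\prod\cos\psi_\alpha\in[0,1]$. Then $1-\phi_0(\xi)=1-\rho\cos\beta$ and $\frac12|\Omega(\xi)-1|^2=\frac12(1+\rho^2)-\rho\cos\beta$. Hence
\[
1-\phi_0(\xi)-\tfrac12|\Omega(\xi)-1|^2=\tfrac12(1-\rho^2).
\]
This identity means both inequalities reduce to comparing $1-\rho^2=1-\prod_\alpha(1-s_\alpha)$ with $\sum_\alpha s_\alpha=|F^*\omega|^2$, where $s_\alpha=\sin^2\psi_\alpha\in(0,1]$.

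For the upper bound we use $1-\prod(1-s_\alpha)\le\sum s_\alpha$, which follows by induction. For the lower bound we use $1-\prod(1-s_\alpha)\ge\max_\alpha s_\alpha\ge\frac1r\sum s_\alpha\ge\frac{1}{\lfloor m/2\rfloor}\sum s_\alpha$. Together these give the two stated estimates, and the $L^2$ characterization then follows by integrating over $M$.

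The main obstacles are two. First, \eqref{eq:sLag_modulus} requires checking the Hermitian Gram determinant identity. Second, the norm convention for $F^*\omega$ has to be pinned down so that the constant $\frac{1}{2\lfloor m/2\rfloor}$ comes out exactly.
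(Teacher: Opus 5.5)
Your proposal is correct and follows essentially the same route as the paper: the skew-adjoint normal form of $F^*\omega$ with $|F^*\omega|^2=\sum\lambda_\alpha^2$, the Hermitian Gram determinant giving $|\Omega(\tau)|^2=\prod_\alpha(1-\lambda_\alpha^2)$, the splitting $1-\phi_\theta(\tau)=\frac12|\Omega(\tau)-e^{\sqrt{-1}\theta}|^2+\frac12(1-|\Omega(\tau)|^2)$, and the elementary bounds $\frac1r\sum\lambda_\alpha^2\le 1-\prod(1-\lambda_\alpha^2)\le\sum\lambda_\alpha^2$. Your $U(m)$ reduction is never actually used; only the trivial phase rotation $\Omega(\tau)\mapsto e^{-\sqrt{-1}\theta}\Omega(\tau)$ is needed, so that step can simply be dropped.
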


\begin{proof}
    Fix $p\in M$.
    Let $r\vcentcolon =\lfloor \frac{m}{2}\rfloor$.
    For simplicity, we first assume that $m$ is even.
    Then $r=\frac{m}{2}$.
    Let $S\in \operatorname{End}(T_pM)$ be defined by $g(Su,v)=F^*\omega(u,v)$.
    As $m$ is even, a similar argument to the K\"ahler case implies that there exist an orthonormal basis $(e_1,f_1,\dots,e_r,f_r)$ of $T_pM$ and numbers $0\leq \lambda_1,\dots,\lambda_r \leq 1$ such that
    \begin{equation}\label{eq:0412_03}
        F^*\omega=\sum_{\alpha=1}^r \lambda_\alpha e_\alpha^\flat \wedge f_\alpha^\flat,
        \qquad
        |F^*\omega|^2=\sum_{\alpha=1}^r \lambda_\alpha^2, \qquad 0 \leq \lambda_\alpha \leq 1.
    \end{equation}
    (Here $\lambda_\alpha=0$ is allowed since $S$ may not be invertible.)
    
    Let $R\in \C^{m \times m}$ be the complex matrix whose columns are
    \begin{equation}\label{eq:even_R_def}
        \d F(e_1),\d F(f_1),\dots,\d F(e_r),\d F(f_r).
    \end{equation}
    Then $|\Omega(\tau)|=|\det_{\C}R|$.
    Also, the Gram matrix $R^*R$ is the matrix of the pullback Hermitian form
    $g+\sqrt{-1}F^*\omega$ in the basis $(e_1,f_1,\dots,e_r,f_r)$.
    Hence,
    \begin{equation}\label{eq:even_R^*R_diagonal}
        R^*R = \bigoplus_{\alpha=1}^r
        \begin{pmatrix}
        1 & \sqrt{-1}\lambda_\alpha\\
        -\sqrt{-1}\lambda_\alpha & 1
        \end{pmatrix},
    \end{equation}
    and therefore
    \begin{equation}\label{eq:0412_04}
        \textstyle
        |\Omega(\tau)|^2
        =
        |\det_\C R|^2
        =
        \det_\C (R^*R)
        = \displaystyle
        \prod_{\alpha=1}^r(1-\lambda_\alpha^2).
    \end{equation}
    
    Next we show that, even if $m$ is odd (where $r=\frac{m-1}{2}$), the representations in \eqref{eq:0412_03} and \eqref{eq:0412_04} remain true.
    In this case, the normal form of $S$ has the same diagonal block structure with one additional zero block.
    Hence, we can take an orthonormal basis $(e_1,f_1,\dots,e_r,f_r,e')$ of $T_pM$ such that $Se'=0$ and equation \eqref{eq:0412_03} holds.
    Then, defining $R\in\C^{m \times m}$ by \eqref{eq:even_R_def} with an additional column $\d F(e')$, we obtain the same diagonal block matrix \eqref{eq:even_R^*R_diagonal} with an additional $1\times1$ block $(1)$ added.
    Thus we also reach equation \eqref{eq:0412_04}.
    
    Now we observe that the following elementary inequalities hold:
    \begin{equation}\label{eq:0412_05}
        \frac1r\sum_{\alpha=1}^r\lambda_\alpha^2
        \le
        1-\prod_{\alpha=1}^r(1-\lambda_\alpha^2)
        \le
        \sum_{\alpha=1}^r\lambda_\alpha^2.
    \end{equation}
    Indeed, using $0\leq \lambda_\alpha \leq 1$, the first inequality follows by
    \[
    \frac1r\sum_{\alpha=1}^r\lambda_\alpha^2\leq \max\lambda_\alpha^2 = 1 - (1-\max\lambda_\alpha^2) \leq 1-\prod_{\alpha=1}^r(1-\lambda_\alpha^2),
    \]
    while, using a telescoping identity argument, the second follows by
    \[
    1-\prod_{\alpha=1}^r(1-\lambda_\alpha^2)=\sum_{\alpha=1}^r\lambda_\alpha^2 \Big( \prod_{\beta=1}^{\alpha-1}(1-\lambda_\beta^2) \Big) \leq \sum_{\alpha=1}^r\lambda_\alpha^2.
    \]
    
    Estimates \eqref{eq:0412_05} combined with \eqref{eq:0412_03} and \eqref{eq:0412_04} ensure that
    \[
    \frac1r|F^*\omega|^2
    \le
    1-|\Omega(\tau)|^2
    \le
    |F^*\omega|^2.
    \]
    On the other hand, since $\phi_\theta(\tau)=\Re(e^{-\sqrt{-1}\theta}\Omega(\tau))$, we have
    \[
    1-\phi_\theta(\tau)
    =
    1-\Re(e^{-\sqrt{-1}\theta}\Omega(\tau))
    =
    \frac12|e^{\sqrt{-1}\theta}-\Omega(\tau)|^2+\frac12(1-|\Omega(\tau)|^2).
    \]
    Combining these relations, we obtain the desired estimates.
\end{proof}

\begin{remark}\label{rem:interpretation_lagrangian}
    In particular, if $F$ is Lagrangian ($F^*\omega=0$), then by \Cref{prop:sLag_L2} the special Lagrangian calibration energy $\E_{\phi_\theta}$ takes the form of a phase energy
    \begin{equation}\label{eq:phase_energy_Lagrangian}
        \mathcal{L}_{\theta}[F]=\frac{1}{2}\int_M \big| \Omega(\tau)-e^{\sqrt{-1}\theta} \big|^2 \d\mathrm{vol}.
    \end{equation}
    It is known that the Lagrangian property is preserved along the mean curvature flow \cite{smoczyk1996canonical}, even in the noncompact setting as in \Cref{thm:main_calibration} under suitable assumptions \cite{MR2299559,MR3881969}.
    Such a flow is called \emph{Lagrangian mean curvature flow} and our energy identity \eqref{eq:energy_identity} directly implies that \eqref{eq:phase_energy_Lagrangian} is a strict Lyapunov functional in this case.
\end{remark}

\begin{remark}
    If $m\in\{2,3\}$, then the estimates in \Cref{prop:sLag_L2} become identities. 
    Thus, without assuming that $F$ is Lagrangian, we have the exact energy formula
    \[
    \E_{\phi_\theta}[F] = \frac12 \int_M|F^*\omega|^2\d\mathrm{vol} + \frac12 \int_M\big| \Omega(\tau)-e^{\sqrt{-1}\theta} \big|^2\d\mathrm{vol}.
    \]
    Recall that in the case $m=2$, the calibration $\phi_\theta$ can be reduced to the K\"ahler calibration $\omega$ after pulling back by a rotation in $\R^4$. However, for $m\geq 3$, such a reduction is not possible.
\end{remark}

% The holomorphic volume form $\Omega=\Omega_I$ of the standard complex structure $I$ is written as
% \[
% \Omega_I = dz_1 \wedge dz_2 = (dx_1 \wedge dx_2 - dy_1 \wedge dy_2) + \sqrt{-1}( dx_1 \wedge dy_2 + dy_1 \wedge dx_2 ).
% \]
% Using the standard hyperk\"ahler structure $(I,J,K)$ of $\C^2$, we then have
% \[
% \Omega_I = \omega_J+\sqrt{-1}\omega_K,
% \]
% where $\omega_J$ and $\omega_K$ are the K\"ahler forms corresponding to $J$ and $K$, respectively.
% Hence
% \[
% \phi_\theta = \Re(e^{-\sqrt{-1}\theta}\Omega_I)
% =
% \cos\theta \omega_J+\sin\theta \omega_K,
% \]
% which is the K\"ahler form of the rotated complex structure $J_\theta:=\cos\theta J + \sin\theta K$.

It is an interesting open problem to classify special Lagrangian submanifolds. Even for the particular setting of entire graphs, such a complete classification is not yet available; this is in stark contrast to the K\"ahler case, where K\"ahler calibrated entire graphs are precisely given by holomorphic functions.
See, e.g., \cite{MR1686614,MR1930884,MR2199179} for rigidity and \cite{MR3508325,MR4277327,tsai2025ansatz} for construction of special Lagrangian entire graphs.

% Finally, we briefly review special Lagrangian entire graphs: in contrast to the K\"ahler case, the complete classification of such graphs is not known.
% Let $F\colon \R^m\to\R^{2m}$ be a gradient graph, i.e, for a smooth function $u\colon\R^m\to\R$,
% \[
% F(x)=(x,\D u(x)).
% \]
% Then $F$ is known to be Lagrangian.
% Letting $\lambda_1(x),\dots,\lambda_m(x)$ be the eigenvalues of the Hessian $\D^2u(x)$, we define the phase function
% \[
% \Theta_u(x)\vcentcolon =\sum_{j=1}^m \arctan\lambda_j(x).
% \]
% Then $F$ is a special Lagrangian of phase $e^{\sqrt{-1}\theta}$ if and only if $\Theta_u=\theta$ (mod $2\pi$) holds on $\R^m$, where $\theta\in(-\frac{m}{2}\pi,\frac{m}{2}\pi)$.
% The special Lagrangian equation $\Theta_u=\theta$ has trivial quadratic solutions, where $F$ are planes.
% If $m=2$, then any entire solution is harmonic ($\theta=0$) or quadratic ($\theta\neq0$) \cite{MR1686614}.
% For $m\geq3$, there are more nontrivial entire solutions, which are necessarily non-convex \cite{MR1930884}.
% More precisely, in the supercritical regime $|\theta|>\frac{m-2}{2}\pi$ the solutions $u$ are again quadratic  \cite{MR2199179}, but the critical $|\theta|=\frac{m-2}{2}\pi$ and subcritical regime $|\theta|<\frac{m-2}{2}\pi$ admit a substantially new class of solutions \cite{MR3508325,MR4277327,tsai2025ansatz}, which may not be polynomial nor harmonic, so far not classified.

\bibliography{MCF}

\end{document}